\documentclass[12pt]{article}
\usepackage{amsmath}
\usepackage{amssymb}
\usepackage{amsthm}
\usepackage{epsfig} 
\usepackage{wrapfig}
\usepackage{pdfpages}
\usepackage[pdftex]{hyperref}
\newcommand {\R}{\mathbb R}

\newcommand {\ct}{{\cal T}}
\newcommand{\p}{\mathbb P}
\newcommand{\php}{(\Phi,\p)}
\newcommand{\Om}{\Omega}
\newcommand{\e}{\mathbb E}
\newcommand{\dd}{\Delta}

\newcommand {\eq}[1]{\begin {equation} #1 \end {equation}}
\newcommand {\eqn}[1]{\begin {equation*} #1 \end {equation*}}

\newcommand {\f}{F}
\newcommand {\h}{H}
\newcommand {\ff}{g}
\newcommand {\g}{f}
\newcommand{\rf}{\f}

\newcommand\lk{\left[}
\newcommand\rk{\right]}

\newcommand\lp{\left(}
\newcommand\rp{\right)}
\newcommand{\ep}{\e_\Phi}

\renewcommand {\sf}{{\cal F}}
\theoremstyle{definition}\newtheorem{thm}{Theorem}
\theoremstyle{definition}\newtheorem{lem}[thm]{Lemma}
\theoremstyle{definition}\newtheorem{cor}[thm]{Corollary}
\theoremstyle{definition}\newtheorem{exm}[thm]{Example}
\theoremstyle{definition}\newtheorem{defi}[thm]{Definition}
\theoremstyle{definition}\newtheorem{rem}[thm]{Remark}
\theoremstyle{definition}\newtheorem{prop}[thm]{Proposition}
\renewcommand {\sp}{\R^d}

\newcommand{\norm}[1]{|{#1}|}
\newcommand{\n}{\mathbf N}
\newcommand{\N}{{\cal N}}

\newcommand{\cp}{{\cal P}}
\newcommand{\B}{{\cal B}}

\newcommand{\fp}{{\p_\Phi}}

\newcommand{\Z}{\mathbb Z}

\newcommand{\ind}{\mathbf 1}

\newcommand{\fo}{\f_\bot}
\newcommand{\rfo}{f_\bot}

\newcommand{\rfp}{\rf_\phi}

\newcommand{\fe}{\e_\Phi}

\newcommand{\sif}{\sim_{\ff}}
\newcommand{\pom}{point-shift}
\newcommand{\Pom}{Point-shift}
\newcommand{\PoM}{Point-Shift}
\newcommand{\pomk}{{point-map}}
\newcommand{\PoMk}{{Point-Map}}
\newcommand{\Pomk}{{Point-map}}
\renewcommand{\l}[2][]{L^{#2}_{#1}}
\renewcommand{\c}[2][]{C^{#2}_{#1}}
\renewcommand{\L}[2][]{{\cal L}^{#2}_{#1}}
\newcommand{\C}[2][]{{\cal C}^{#2}_{#1}}
\newcommand{\thetg}{{\theta_{\g}}}

\newcommand{\Ltg}{\L{\thetg}}
\newcommand{\Ctg}{\C{\thetg}}
\newcommand{\om}{\omega}
\renewcommand{\Z}{\Bbb Z}
\newcommand{\asy}{almost surely}

\newcommand{\fph}{F_\phi}

\newcommand{\theog}{\theta_{\g_\bot}}

\newcommand{\ffsubph}{F_\phi}
\newcommand{\fst}{f_{s}}

\begin{document}

\author{F. Baccelli\thanks{baccelli@math.utexas.edu} \\{\small University of Texas at Austin}
\and M.-O. Haji-Mirsadeghi\thanks{mirsadeghi@sharif.ir}\\{\small Sharif University of Technology}}

\title{Point-Shift Foliation of a Point Process}

\date{}

\maketitle

\begin{abstract}
A point-shift $F$ maps each point of a point process $\Phi$ to
some point of $\Phi$.
For all translation invariant point-shifts $F$,
the $F$-foliation of $\Phi$ is a partition of the support
of $\Phi$ which is the discrete analogue of the stable manifold of
$F$ on $\Phi$.
It is first shown that foliations lead to a classification
of the behavior of point-shifts on point processes.
Both qualitative and quantitative properties of foliations are
then established. It is shown that for all point-shifts $F$,
there exists a point-shift $F_\bot$, the orbits of which are
the $F$-foils of $\Phi$, and which is measure-preserving.
The foils are not always stationary point processes.
Nevertheless, they admit relative intensities with respect
to one another.
\end{abstract}

{\bf Key words:} Point process, Stationarity, Palm probability, \Pom, \Pomk, Allocation rule, Mass transport principle, Dynamical system, Stable manifold.

\noindent{\bf MSC 2010 subject classification:} 37C85, 60G10, 60G55, 60G57.

\tableofcontents

\section{Introduction}
A point process is said to be flow-adapted 
if its distribution is invariant by the group of translations on $\R^d$.
A point-shift is a dynamics on the support
of a flow-adapted point process, which is itself flow-adapted.

The main new objects of the paper are the notion
of foliation of a flow-adapted point process w.r.t.
a flow-adapted point-shift.

Such a foliation
is a discrete version of the global stable manifold (see e.g. \cite{KaHa95}
for the general setting and below for the precise definition
used here) of this dynamics, i.e.,
two points in the support of the point process are in the
same {\em leave} or {\em foil} of this stable manifold
if they have the same ``long term behavior'' for this dynamics.
This foliation provides a flow-adapted partition of the support
of the point process in connected components and foils.

The {\em point foil} of a point process w.r.t. a point-shift
is defined under the Palm distribution of this point
process. It is the random counting measure with atoms at
the points of the foil of the origin. 
The distribution of the point foil 
under the Palm probability of the point process
is left invariant by all bijective shifts preserving the foliation.
A point foil is not always {\em markable}, i.e., is not always
a stationary point process under its Palm distribution. 

The main mathematical result of the paper is the classification 
of point-shifts based on the cardinalities of their foils and
connected components (Theorem \ref{cardinality.components})
and on whether their point foils are markable or not.

The literature on point-shifts
starts with the seminal paper by
J. Mecke \cite{Me75}. The fundamental result of \cite{Me75} is
the {\em point stationarity theorem}, which states that all bijective
point-shifts preserve the Palm distribution of all simple and
stationary point processes.
The notion of {\em point-map} was introduced by
H. Thorisson (see \cite{Th00} and the references therein) and
further studied by M. Heveling and G. Last \cite{HeLa05}.
The dynamical system analysis of point-shifts which is pursued 
in the present paper was proposed in \cite{BaHaM}.   
The last paper is focused on long term properties of iterates of
point-shifts. It introduces the notion of {\em point-map probability},
which provides an extension of Mecke's point stationarity theorem.
In contrast, the present paper is focused on the stable manifold
of a point-shift, as already mentioned.
It is centered on the definition of this object and 
on the study of
both qualitative and quantitative properties of its distribution.

The paper is structured as follows. 
Section \ref{sec:difo} defines the setting for discrete foliations
and Section \ref{sec:ppps} that for point processes and point-shifts.
Section \ref{sec:psfo} combines the two frameworks and defines
the discrete foliation of a point process by a point-shift.
Section \ref{sec:cacl} gives the classification.
Section \ref{sec:pppm}
introduces the stable group of this foliation, and shows
the existence of measure preserving dynamics on the foliation. It also 
defines the foil point process.
Finally, Section \ref{sec:quant} gathers the quantitative
properties of foliations.

\section{Discrete Foliations}
\label{sec:difo}

\subsection{Foils and Connected Components}
The notion of discrete foliation can be defined for any
function on any set. Since the present paper is focused on stochastic objects,
only measurable functions on measurable spaces will be considered. 

Assume $(X,\sf)$ is a measurable space where all
singletons are measurable;
i.e., for all $x\in X$ one has $\{x\}\in\sf$
and let $\ff$ be a measurable map (or dynamics) on $X$\footnote{  
When $X$ is a topological space
and $\ff$ is continuous, $\ff$ defines a topological dynamical
system; when $X$ is equipped with a probability
measure which is preserved by $\ff$, the latter defines a measure
preserving dynamical system.}.
Let $\sif$ be the binary relation on the elements of $X$ defined by 
\eqn{x\sif y\Leftrightarrow \exists n\in \mathbb N;\ff^n(x) = \ff^n(y).}
It is immediate that $\sif$ is an equivalence relation. 
\begin{defi}\label{foliation}
The partition of $X$ generated by the equivalence
classes of $\sif$ will be called the \emph{$\ff$-foliation of $X$}.
Denote it by $\L \ff(X)$ or $\L [X]\ff$.
Each equivalence class is called a \emph{foil}.
The equivalence class of $x\in X$
is denoted by $\l \ff(x)$.
\end{defi}

\begin{rem}
In the terminology of geometry, foils are called leaves.
But since the paper uses graphs which are mostly trees,
to avoid confusion with tree leaves,
the word foil will be used here. 
\end{rem}

One can also see $\l \ff(x)$ as the limit of the increasing
sets $\l[n] \ff(x)$, where 
$$\l[n]\ff (x) := \{y\in X; \ff^n(y) = \ff^n(x)\}.$$
The cardinality of $\l \ff(x)$ (resp. $\l[n]\ff(x)$)
will be denoted by $l^\ff(x)$ (resp. $l_n^\ff(x)$). 
 
For reasons that will be explained below,
the class of $\ff(x)$, namely $\l \ff(\ff(x))$ will be denoted by
$\l[+]\ff(x)$. If there exists a point $y\in X$ 
such that $\ff(y)\in \l \ff(x)$, $\l \ff (y)$ is
denoted by $\l[-]\ff(x)$.
One can verify that $\l[+] \ff(x)$ is well-defined and
that both $\l[-]\ff(x)$ and $\l[+] \ff(x)$ are {\em class} objects; 
i.e., they do not depend on the choice of the element
of the equivalence class.

\begin{rem}\label{remhom}
For a homeomorphism $\ff$ on a metric space,
the {\em stable manifold} of a point $x\in  X$ with respect to $\ff$ is
\eqn{W^s(\ff,x) = \{y\in X; \lim_{n\to\infty}d(\ff^n(x),\ff^n(y))=0\}.}
Hence, in the case where the space $X$ is equipped with a discrete metric,
the stable manifold foliation is the $\ff$-foliation of $X$ as defined above.
This explains the chosen terminology.
\end{rem}

The measurability of $\ff$ implies all foils are measurable subsets of $X$.

A partition $\L{}$ of  $X$ into measurable sets is called $\ff$-invariant
if for all $L\in\L{}$
\eqn{\ff^{-1}(L) = \{x\in X; \ff(x)\in L\}\in\L{},}
provided that $\ff^{-1}(L)\neq\emptyset$.

The $\ff$-foliation of $X$
is the finest $\ff$-invariant partition 
$\L[X]\ff$ of $X$
in the sense that for all $\ff$-invariant partitions $\L \prime$ one has 
\eqn{\forall L\in\L[X]\ff,\ \exists L^\prime\in\L \prime \mbox{ s.t. } L\subset L^\prime. }

\begin{defi}
\label{def-graph}
The graph $G^\ff = G^\ff(X)=(V,E)$ has for set of vertices $V=X$
and for set of edges $E=\{(x,\ff(x)), x\in X$\footnote{In all cases
to be considered, the connected components of $G^\ff$
will always have a countable collection of nodes
and a finite degree, even when $X$ is not countable; see the next remark.}.
Note that this graph can be
considered either as undirected 
or as directed, with each edge from $x$ to $\ff(x)$.   

For $x\in X$, denote by  $C^\ff(x)$ the undirected
connected component of $G^\ff$ which contains $x$;
i.e., the set of all points $y\in X$ for which there exist
non-negative integers $m$ and $n$ such that $\ff^m(x) = \ff^n(y)$. 
The set of connected component of $G^\ff$ will be denoted
by $\C \ff (X)$.
If $x\sif y$ then $x$ and $y$ are in the same connected
component of $C^\ff(x)$. In other words, the foliation is
a subdivision of $\C\ff (X)$.

$C^\ff(x)$ will be said to be \emph{$\ff$-acyclic}, if the restriction
of $G^\ff$ to $C^\ff(x)$ is a tree.
\end{defi}

\begin{lem}\label{cyclic.component}
The connected component $C=C^\ff(x)$ of $G^\ff$ is either
an infinite tree or it has exactly one (directed) cycle $K(C)$; in the  
latter case, for all $y\in C$, there exists $n\in \mathbb N$
such that $\ff^n(y)\in K(C)$. 
\end{lem}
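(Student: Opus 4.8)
The plan is to analyze the structure of a connected component $C = C^\ff(x)$ by understanding when its directed edges $(y,\ff(y))$ can form a cycle. The key structural fact I would exploit is that $\ff$ is a function: every vertex $y \in C$ has out-degree exactly one (its unique image $\ff(y)$). This severely constrains the shape of the component, because a directed graph in which every vertex has out-degree one is a functional graph, and its connected components are exactly the ``rho-shaped'' or ``tree-toward-cycle'' structures familiar from the study of iterated maps.

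First I would establish that $C$ contains \emph{at most one} directed cycle. Suppose $C$ contained two distinct directed cycles $K_1$ and $K_2$. Since $C$ is connected (as an undirected graph), there is an undirected path joining a vertex of $K_1$ to a vertex of $K_2$. I would then argue that following out-edges (applying $\ff$ repeatedly) from any vertex eventually lands in at most one cycle, using the out-degree-one property: starting from any vertex $y$ and iterating $\ff$, the forward orbit $y, \ff(y), \ff^2(y), \dots$ is a deterministic sequence, so if it ever enters a cycle, that cycle is uniquely determined by $y$. The obstacle here is reconciling the directed structure (where uniqueness of forward orbits is clear) with the undirected notion of connectedness used to define $C$; I would handle this by showing that any two vertices $y, z$ in the same undirected component satisfy $\ff^m(y) = \ff^n(z)$ for some $m,n$ (this is essentially the definition of $C^\ff$ restated), so their forward orbits eventually coincide and hence enter the same cycle if any.

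Next I would dichotomize. If no vertex of $C$ has its forward orbit entering a cycle, then the restriction of $G^\ff$ to $C$ has no directed cycle; combined with connectedness and the out-degree-one structure, I would argue this forces $C$ to be a tree. It must be an \emph{infinite} tree because a finite functional graph (finite set with a self-map) always contains a cycle — iterating $\ff$ from any point in a finite set must eventually repeat a value by pigeonhole, producing a directed cycle. Hence an acyclic component is necessarily infinite. In the remaining case, some forward orbit enters a cycle $K(C)$, which by the uniqueness argument above is the \emph{unique} cycle; and then for \emph{every} $y \in C$, I would show its forward orbit reaches $K(C)$. This last claim follows by taking any $z$ whose orbit enters $K(C)$, using undirected connectedness to get $\ff^m(y) = \ff^n(z)$ for some $m,n$, and observing that $\ff^n(z)$ already lies on the orbit leading into $K(C)$, so $\ff^m(y)$ does too, whence some further iterate $\ff^{m+k}(y) \in K(C)$.

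The main obstacle I anticipate is the careful bookkeeping in the uniqueness-of-cycle step, specifically ruling out that the undirected connectivity could ``glue together'' two separate cycles through a path that travels against the direction of some edges. The resolution rests on the following clean observation: because $\ff$ is single-valued, any two forward orbits that ever share a common vertex coincide from that point onward, so the map sending each $y$ to ``the cycle its orbit eventually reaches (if any)'' is constant on each undirected component. Making this rigorous — translating an arbitrary undirected path in $C$ into a statement about coincident forward iterates — is the technical heart of the argument, and everything else (infiniteness of acyclic components via pigeonhole, and the reachability of $K(C)$) follows routinely once it is in place.
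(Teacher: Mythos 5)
Your proof is correct and follows essentially the same route as the paper, which disposes of the lemma in one sentence by invoking exactly the two facts you build on: every vertex has out-degree one and $C$ is connected as an undirected graph. Your version simply fills in the details (uniqueness of the cycle via coincidence of forward orbits, the pigeonhole argument for why an acyclic component must be infinite, and reachability of $K(C)$), all of which are sound.
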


\proof All statements follow from the fact that all
vertices of $C$, seen as a directed graph, have out-degree
equal to one and from the fact that $C$ is connected (as an undirected graph).

\begin{rem}\label{countable.comp}
If for all $x\in X$, $\mbox{Card}(\ff^{-1}(x))$ is finite,
then $C^\ff(x)$ is countable. 
\end{rem}

Whenever it is clear from the context, the superscript $\ff$ is dropped.

\subsection{Foil Order}
\label{sec:ford}
The $\ff$-foliation of each connected
component of $X$ can be equipped with some form of order. Consider $\ff(x)$
as the {\em father} of $x$.  Then $\l \ff(x)$ denotes the $\ff$-generation
of $x$ i.e., the set of its $\ff$-cousins of all orders;
$\l[n]\ff(x)$ denotes the set
of its $\ff$-cousins with common $n$-th $\ff$-ancestor. 
In addition, $\l[+]\ff(x)$ is the $\ff$-generation {\em senior} 
to $x$'s, i.e., that of its father, whereas
$\l[-]\ff(x)$  (if it exists) is the $\ff$-generation {\em junior} to $x$'s, 
i.e., that of its sons (if any) or that of the sons of its cousins (again if
any).

\begin{defi}\label{foils.order}
Note that if $C(x)$ is acyclic, this definition of generations
gives a linear order on the foils of $C(x)$ which is that of seniority:
by definition $\l \ff(y)<\l[+]\ff(y)$ for all $y\in C(x)$.
This order is then similar to the order of either $\Bbb Z$ or $\Bbb N$
(total order with either no minimal element or with a minimal element).

\end{defi}

Note that $\ff^n(X)$ is a sequence of decreasing sets in $n$.
Its limit (which may be the empty set) is denoted by $\ff^\infty(X)$
and, consistently with the seniority order, the set $\ff^\infty(X)$
will be called the set of \emph{$\ff$-primeval} elements  of $X$.   

\begin{defi}\label{defmn}
Let $n$ be a positive integer.
For all $x\in X$, let $D_n(x)=D_n(\ff,x)$
be the set of all descendants of $x$ which belong to
the $n$-th generation w.r.t. $x$; i.e.,
\eqn{D_n(x):=\{y\in X; \ff^n(y)=x\}.}
The cardinality of $D_n(x)$ 
(which may be zero, finite or infinite)
is denoted by $d_n(x)$. Also, let $D(x)=D(\ff,x)$ denote the set
of all descendants of $x$; i.e.,
\eqn{D(x):=\{y\in X; \exists~ n\ge 0: \ff^n(y)=x\}=\bigcup_{n=1}^\infty D_n(x).}
Finally the cardinality of $D(x)$ is denoted by $d(x)$. 
\end{defi}

\section{Point Processes and Point-Shifts}
\label{sec:ppps}
Whenever $(\sp,+)$ acts on a space,
the action of $t\in\sp$ on that space is denoted by $\theta_t$.
It is assumed that $(\sp,+)$ acts on the reference
probability space $(\Omega,\sf)$.
\subsection{Counting Measures and Point Processes}
\label{sec:not}
Let $\n$ be the space of all locally finite and simple counting measures
on $\sp$. It contains all measures
$\phi$ on $\sp$ such that for all bounded (relatively compact)
Borel subsets $B$
of $\sp$, $\phi(B)\in \mathbb N$ (counting measure condition)
and for all $x\in \sp$, $\phi(\{x\})\le 1$ (simplicity condition).
Let $\N$ be the cylindrical
$\sigma$-field on $\n$ generated by the
functionals $\phi\mapsto\phi(B)$, where $B$ ranges over
the elements of $\B$, the Borel $\sigma$-field of $\sp$.
The flow $\theta_t$ acts on counting measures as
$$ (\theta_t \phi)(B) = \phi(B+t),$$
and therefore on $\sp$ as $ \theta_t x = x-t$.

Let $\n^0$ be the subspace of $\n$ of counting measures
with an atom at the origin.

A $\emph{(random) point process}$ is  a
couple $\php$ where $\p$ is a probability measure on a
measurable space $(\Om,\sf)$ and $\Phi$ is a measurable mapping
from $(\Om,\sf)$ to $(\n,\N)$. Note that the point process $\php$ is
a.s. simple by construction.

The stationarity of a point process translates into the assumptions
that for all $t\in \sp$, $\theta_t \p=\p$ and
and that $\Phi(\theta_t \omega)= \theta_t \Phi(\omega)$.

When the point process $\php$ has a finite and positive intensity,
its Palm probability \cite{DaVe08} is denoted by $\p_\Phi$.
Expectation w.r.t. $\p_\Phi$
is denoted by $\fe$.

\subsection{Flow-Adapted \PoM s} 
A \pom{} on $\n$ is a measurable function  $\f$ which is
defined for all pairs $(\phi, x)$, where $\phi\in\n$
and $x\in\phi$, and satisfies the relation
$\f_\phi(x) \in \phi.$

In order to define flow-adapted \pom{}s,
it is  convenient to use the notion of \pomk.
A measurable function $\g$ from the set $\n^0$ to $\sp$
is called a \emph{\pomk} if for all 
$\phi$ in $\n^0$, $\g(\phi)$
belongs to $\phi$.
 
If $\g$ is a  \pomk, the associated
\emph{flow-adapted \pom}, $\f=\f_\g$,
is a function which is defined for all pairs $(\phi,x)$,
where $\phi\in\n$ and  $x\in\phi$, by 
$\f_\phi(x) = \g(\theta_x\phi) + x.$
The \pom{} $\f$ is flow-adapted because 
\begin{eqnarray}\label{compf}
\f_{\theta_t\phi}(\theta_tx) &=& \f_{\theta_t\phi}( x -t)=
\g(\theta_{x -t}(\theta_t\phi))+x-t\nonumber\\
&=&
\g(\theta_x\phi)+x-t=\f_\phi(x)-t=\theta_t(\f_\phi(x)).
\end{eqnarray}
In the rest of this article, \pom{} always means 
flow-adapted \pom{}. \Pom{}s will be denoted by
capital letters and the \pomk{} of a given \pom{}
will be denoted by the associated 
small letter ($\f$'s \pomk{} is hence denoted by $\g$).

The \emph{$n$-th image of
$\phi$ under $\f$}
is inductively defined as 
\eqn{\ffsubph^n\phi=\ffsubph(\ffsubph^{n-1}\phi), \quad n\ge 1,} 
with the convention $\ffsubph^{0}\phi=\phi$.
Notice that $\ffsubph^n\phi$ is not necessarily simple.

\subsection{Examples}
\label{examples}
This subsection introduces a few basic examples which will be 
used to illustrate the results below. 
These examples will be based on two types of point processes:
Poisson point processes and 
Bernoulli grids. The latter are defined as follows: it is well known that
the $d$ dimensional lattice $\Z^d$ can be transformed into a
stationary point process in $\R^d$ by a uniform
random shift of the origin in the $d$ unit cube.
The \emph{Bernoulli grid}
of $\R^d$ is obtained in the same way when
keeping or discarding each of the lattice points 
independently with probability $p$.
The result is again a stationary point process whose distribution will 
be denoted by ${\cal P}_p$.
\subsubsection{Strip  \PoM{}}
\label{ssStrip}
The Strip \PoM{} was introduced by Ferrari,
Landim and Thorisson \cite{FeLaTh04}. 
For all points $x=(x_1,x_2)$ in the plane, let $St(x)$ denote the 
half strip $(x_1,\infty)\times [x_2-1/2,x_2+1/2]$. Then $S(x)$ is the
left most point of $St(x)$. It is easy to verify that $S$ is
flow-adapted. Denote its \pomk{} by $s$.

The strip \pom{} is not well-defined when there are more than one
left most point in $St(x)$, nor when the
point process has no point (other than $x$) in $St(x)$.
Note that such ambiguities
can always be removed, and some refined version
of the strip \pom{} can always be defined
by fixing, in some flow-adapted manner, 
the choice of the image and by choosing $f_\phi(x)=x$
in the case of non-existence. By doing so one gets a refined
\pom{} defined for all $(\phi,x)$.

\subsubsection{Mutual Nearest Neighbor \PoM{}}
\label{ssMnn}
Two points $x$ and $y$ in $\phi$ are mutual nearest neighbors 
if $\phi(B^o(x,||x-y||))=\phi(B^o(y,||x-y||))=1$ and
$\phi(B(x,||x-y||))=\phi(B(y,||x-y||))=2$,
where $B^o(z,r)$ (resp. $B(z,r)$) denotes the open
(resp. closed) ball of center $z$ and radius $r$.
The Mutual Nearest Neighbor \PoM{} $N$
is the involution which
maps $x$ to $y$ when these two points are mutual nearest neighbors
and maps $z$ to itself if $z$ has no mutual nearest neighbor.
This \pom{} is bijective.

\subsubsection{Next Row \PoM{} on the Bernoulli Grid }
\label{ssGrid}

The \emph{Next Row} \pom{},
which will be denoted by $R$, is defined on the $d$-dimensional 
Bernoulli grid as follows:
\eqn{R_\phi(x_1,\ldots, x_d) = (x_1+1, x^\prime_2, x_3,\ldots, x_d),}
where 
\eqn{x^\prime_2 = \min \{y\geq x_2; (x_1+1,y, x_3,\ldots, x_d)\in\phi\}).}
It is easy to verify that if $d\geq 2$ and $p>0$, $R$ is a.s. well-defined. 

\subsubsection{Condenser \PoM{}s}
\label{ssCEPM}
Assume each point $x\in\phi$ is marked with 
\eqn{m_c(x)=\#(\phi\cap B(x,1)).}
Note that $m_c(x)$ 
is always positive. The 
\emph{condenser \pom} 
acts on  marked 
point process as follows: it goes from each point $x\in\phi$
to the closest point $y$ with a larger first coordinate such that
$m_c(y)= m_c(x) + 1$.
It is easy to verify that the condenser 
\pom{} is flow-adapted and almost surely well-defined
on the homogeneous Poisson point process.

\subsection{On Finite Subsets of Point Process Supports}
This subsection contains some of the key
technical results to be used in the proofs below.
Below, a counting measure will often be
identified with its support, namely with a discrete subset of
$\sp$.
\begin{lem}\label{counting.given.point}
Let ${\cal X}\subset \n(\sp)$ be a family of discrete subsets of
$\sp$ such that, for all $t\in\sp$ and $X\in\cal X$,
one has $\theta_tX\in\cal X$. Assume $\pi:{\cal X}\to 
\n(\sp)$ is a measurable finite and non-empty
\emph{flow-adapted inclusion}, i.e., for all $t\in\sp$ and all $X\in\cal X$,
\begin{eqnarray*}
0<\norm{\pi(X)}<\infty && \text{(finite and non-empty),} \\
\pi(\theta_t X)=\theta_t\pi(X) && \text{(flow-adapted),}\\
\pi(X)\subset X && \text{(inclusion).}
\end{eqnarray*}
Then, there exists a flow-adapted numbering of the points of the elements
of $\cal X$.  
\end{lem}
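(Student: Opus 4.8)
The plan is to reduce the statement to the construction of a single flow-adapted distinguished point in each configuration, after which a canonical enumeration by distance becomes available. The genuine difficulty in numbering the points of an element $X\in\cal X$ directly is the absence of a canonical origin: $X$ is typically infinite and admits no smallest point for any translation-invariant order, so no point of $X$ can be singled out equivariantly from $X$ alone. This is precisely where the hypotheses on $\pi$ enter. First I would record that the lexicographic order $<_{\mathrm{lex}}$ on $\sp$ is translation-invariant, i.e. $x<_{\mathrm{lex}}y$ if and only if $x-t<_{\mathrm{lex}}y-t$ for every $t\in\sp$. Since $\pi(X)$ is finite and non-empty for every $X$, it has a unique lexicographically smallest element, which I denote $x_0(X)$. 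The inclusion $\pi(X)\subset X$ gives $x_0(X)\in X$, and combining $\pi(\theta_tX)=\theta_t\pi(X)$ with the translation-invariance of $<_{\mathrm{lex}}$ (the minimum of $\pi(X)-t$ is the minimum of $\pi(X)$ shifted by $-t$) yields $x_0(\theta_tX)=\theta_tx_0(X)$. Thus $X\mapsto x_0(X)$ is a flow-adapted selection of a single point of $X$, which will serve as a root.

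Next I would enumerate all of $X$ relative to this root. Define the strict order $\prec_X$ on $X$ by setting $x\prec_X x'$ when $\norm{x-x_0(X)}<\norm{x'-x_0(X)}$, or when these norms are equal and $x-x_0(X)<_{\mathrm{lex}}x'-x_0(X)$. Distinct points of $X$ have distinct relative positions $x-x_0(X)$, so $\prec_X$ is strict and total, and local finiteness of $X$ forces every initial segment $\{x'\in X: x'\prec_X x\}$ to be finite. Hence $\prec_X$ has order type an initial segment of $\mathbb N$ (all of $\mathbb N$ when $X$ is infinite), and assigning to each $x\in X$ its rank $n_X(x)\in\{1,2,\dots\}$ under $\prec_X$ is a bijection onto that segment, with $x_0(X)$ receiving the label $1$. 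All ingredients of $\prec_X$, namely the norm $\norm{x-x_0(X)}$ and the relative lexicographic comparison, depend only on differences of points and on the flow-adapted root, hence are unchanged under a simultaneous translation: the map $x\mapsto x-t$ is an isomorphism of $(X,\prec_X)$ onto $(\theta_tX,\prec_{\theta_tX})$, so $n_{\theta_tX}(\theta_tx)=n_X(x)$ and the numbering is flow-adapted.

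It remains to verify measurability, which is routine but should be spelled out. Measurability of $\pi$ gives measurability of $x_0(\cdot)$, since the lexicographic minimum of the finite set $\pi(X)$ is a measurable function of $X$. For fixed $i$, the statement that a given point is the $i$-th element in the order $\prec_X$ is expressible through countably many comparisons of norms and lexicographic relations among points of $X$, each of which is measurable with respect to $\N$; hence each map $X\mapsto(\text{$i$-th point of }X)$ is measurable and the numbering is a measurable object. The main obstacle, and the only non-formal step, is the very first one: producing a flow-adapted root. Everything hinges on the finiteness of $\pi(X)$, so that a lexicographic minimum exists, combined with the translation-invariance of $<_{\mathrm{lex}}$, so that this minimum is flow-equivariant; once such a root is available, the distance-plus-lexicographic enumeration and its measurability follow essentially automatically.
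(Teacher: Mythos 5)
Your proof is correct and follows essentially the same route as the paper: select the lexicographically least point of the finite non-empty set $\pi(X)$ as a flow-adapted root, then enumerate $X$ by distance to that root with lexicographic tie-breaking, using local finiteness to guarantee the ranks are well defined. Your additional remarks on measurability and on the equivariance of the relative order are a welcome elaboration of steps the paper leaves implicit, but they do not change the argument.
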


\begin{proof}
For all $X\in\cal X$ one can choose a point $y_1$ of
$\pi(X)$ in a flow-adapted manner;
e.g. the least point in lexicographic order of $\R^d$.
Then considering $y_1$ as the first point,
one can number the other points of $X$ according to
their distance to $y_1$ in an increasing order;
if there are several points equidistant to $y$,
one can sort them in increasing lexicographic order.
Note that the fact that $X$ is discrete implies
there are at most finitely many points in $B_r(y)$
for all given $y$ and hence the above numbering is well-defined. 
\end{proof}

\begin{thm}\label{no.finite.invariant.collection}
Let $(\Phi,\p)$ be a stationary point process and $n=n(\Phi)$ be
a measurable flow-adapted random variable taking
its values in $\overline{\Bbb N}= \Bbb N\cup\{\infty\}$. 
Let $\Psi=\{\Psi_i\}_{i=1}^n$ be a flow-adapted
collection of infinite, pairwise disjoint measurable
subsets of $\Phi$. If $\Xi=\{\Xi_i\}_{i=1}^n$ is a
flow-adapted collection of subsets of $\Phi$ such
that for all $i$, $\Xi_i$ is a finite subset of $\Psi_i$ then,
\asy, all $\Xi_i$-s are empty.   
\end{thm}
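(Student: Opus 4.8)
The plan is to argue by contradiction using the mass transport principle. Suppose the conclusion fails, so that with positive probability at least one $\Xi_i$ is non-empty. On that event each such $\Xi_i$ is a finite non-empty subset of $\R^d$ and hence has a well-defined lexicographically minimal point $y_i^\ast$. Set $A=\{y_i^\ast:\Xi_i\neq\emptyset\}$, the collection of these minima, one per non-empty class. Since the relation ``$x$ and $y$ lie in a common $\Psi_i$'' and the membership ``$y\in\Xi_i$'' are flow-adapted, and since taking the lexicographic minimum commutes with the shifts $\theta_t$, the set $A$ is a flow-adapted subset of $\Phi$. Moreover $A$ is non-empty exactly when some $\Xi_i$ is, so by the contradiction hypothesis $A$ is non-empty with positive probability; covering $\R^d$ by countably many unit cubes then shows $A$ has positive intensity.

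First I would build a flow-adapted transport kernel $T$ on the points of $\Phi$ that overloads the points of $A$. For $x,y\in\Phi$ put $T(x,y)=1$ when $x$ and $y$ belong to a common $\Psi_i$ with $\Xi_i\neq\emptyset$ and $y=y_i^\ast$, and $T(x,y)=0$ otherwise. Two properties are then immediate: each point sends out total mass $\sum_y T(x,y)\le 1$ (a point of $\Psi_i$ sends its single unit to $y_i^\ast$, and points in no $\Psi_i$ send nothing); and each point of $A$ receives infinite mass, since $\sum_x T(x,y_i^\ast)=\mathrm{Card}(\Psi_i)=\infty$ because $\Psi_i$ is infinite. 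The kernel is flow-adapted, as its definition refers only to the flow-adapted class structure and to the shift-covariant lexicographic selection.

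Then I would apply the mass transport principle to $T$. In its stationary form it reads
\[
\e\Big[\sum_{x\in\Phi\cap[0,1)^d}\ \sum_{y\in\Phi}T(x,y)\Big]
=\e\Big[\sum_{y\in\Phi\cap[0,1)^d}\ \sum_{x\in\Phi}T(x,y)\Big].
\]
The left-hand side is the expected mass leaving the points of $\Phi$ in the unit cube, which is at most $\e[\mathrm{Card}(\Phi\cap[0,1)^d)]$, the (finite) intensity of $\Phi$. The right-hand side is the expected mass entering those points; since each point of $A$ absorbs infinite mass and $A$ has positive intensity, the event that $A$ meets the unit cube has positive probability, making this side $+\infty$. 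This is impossible, so the assumption fails and $A$ is empty \asy{}, which is exactly the statement that every $\Xi_i$ is empty \asy{}.

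The step I expect to require the most care is checking that the construction is genuinely flow-adapted and label-independent: the indexing $i$ of the classes need not be canonical and may be permuted by the shifts, so one must phrase the class relation, the selection $y_i^\ast$, and hence $T$ intrinsically, so that the mass transport principle (which sees only the kernel, not the labels) applies. A secondary point is the standing assumption that $\Phi$ has finite intensity, which is what bounds the left-hand side; the zero-intensity case is trivial. One could alternatively produce the flow-adapted selection of $y_i^\ast$ through the numbering provided by Lemma \ref{counting.given.point}, but for a finite non-empty set the lexicographic minimum already gives it directly.
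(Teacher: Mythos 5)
Your proof is correct, but it takes a genuinely different route from the paper's. The paper uses $\Xi_i$ as the flow-adapted inclusion required by Lemma \ref{counting.given.point} to produce a flow-adapted enumeration $A(1,\Xi_i),A(2,\Xi_i),\ldots$ of each $\Psi_i$, defines the point-shift $A(m,\Xi_i)\mapsto A(m+1,\Xi_i)$ (identity elsewhere), observes that it is injective but misses $A(1,\Xi_i)$, and derives a contradiction with Corollary \ref{surjective.injective} (injective $\Leftrightarrow$ surjective a.s., itself a mass-transport fact, and a forward reference in the paper's text). You instead apply the mass transport principle once and directly, with the ``everyone in $\Psi_i$ sends one unit to the selected point of $\Xi_i$'' kernel: outgoing mass is at most $1$ per point because the $\Psi_i$ are pairwise disjoint, incoming mass at a selected point is $\mathrm{Card}(\Psi_i)=\infty$, and finite $=$ infinite is absurd. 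Your version is shorter and self-contained --- it needs neither the enumeration lemma nor the bijectivity corollary, only the single selected point per non-empty $\Xi_i$ and Lemma \ref{w} (or its stationary unit-cube form, which is equivalent under the standing finite-intensity assumption that both proofs require). What the paper's route buys is reuse: Lemma \ref{counting.given.point} and Corollary \ref{surjective.injective} are developed anyway and exploited repeatedly elsewhere (e.g.\ in the proof of Theorem \ref{cardinality.components}), so the authors get this theorem almost for free from that machinery. Your closing caveats --- that the kernel must be phrased intrinsically so that shift-induced relabelling of the indices $i$ is harmless, and that disjointness of the $\Psi_i$ is what bounds the outgoing mass --- are exactly the right points to check, and both hold.
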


In words, Theorem~\ref{no.finite.invariant.collection} states that no
stationary point process (and no collection of its infinite disjoint subsets)
possesses a finite non-empty flow-adapted inclusion.

\begin{proof}
If, for some $i$, $\Xi_i\neq\emptyset$, one has a
flow-adapted numbering of the points of $\Psi_i$
from Lemma~\ref{counting.given.point}.
Let $A(m,\Xi_i)$, $m\ge 1$, be the $m$-th point of $\Psi_i$ in this numbering.
Now define the following point-shift:
\eqn{
\f_\Phi(x) = 
\begin{cases}
x & x\in\Phi\backslash\bigcup_{i\ \mbox{s.t.}\ \Xi_i\neq\emptyset} \Psi_i,\\
A(m+1,\Xi_i) & \Xi_i\neq\emptyset, x = A(m,\Xi_i).
 \end{cases}}
The compatibility assumptions imply that $\f_\Phi$ is indeed a \pom{}.
It is clear from the definition that $\f_\Phi$ is
injective from the support of $\Phi$ to itself, and according
to Corollary~\ref{surjective.injective} below, $\f_\Phi$ is \asy{} bijective.
But, for all $i$ for which $\Xi_i\neq\emptyset$, 
$A(1,\Xi_i)$ is not the image of any point.
Therefore \asy{} there is no non-empty $\Xi_i$. 
\end{proof}

\begin{rem}
In Theorem~\ref{no.finite.invariant.collection},
the condition that the $\Xi_i$-s are disjoint 
is not necessary. But for the sake of space,
the proof of this more general case is skipped. 
\end{rem}

\begin{cor}\label{no.finite.invariant.subset}
Letting $\Psi=\Phi$ in Theorem~\ref{no.finite.invariant.collection}
gives that one cannot choose a finite non-empty subset
of a point process in a flow-adapted manner. 
\end{cor}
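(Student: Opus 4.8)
The plan is to obtain this as the immediate single-set instance of Theorem~\ref{no.finite.invariant.collection}. First I would fix $n=1$ and take $\Psi_1=\Phi$, so that $\Psi=\{\Phi\}$ is a (trivially flow-adapted, trivially pairwise-disjoint) collection consisting of the single set $\Phi$. A flow-adapted choice of a finite non-empty subset of $\Phi$ is then precisely a flow-adapted collection $\Xi=\{\Xi_1\}$ in which $\Xi_1$ is a finite subset of $\Psi_1=\Phi$; this is exactly the data to which the theorem applies.

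The one hypothesis of the theorem that is not purely formal in this specialization is that each $\Psi_i$ be infinite, i.e.\ that $\Phi=\Psi_1$ be almost surely infinite. I would verify this from stationarity together with the standing assumption of finite positive intensity: since $\R^d$ has infinite Lebesgue measure and the intensity is positive, a stationary point process has almost surely infinitely many atoms. With this in hand, all the hypotheses of Theorem~\ref{no.finite.invariant.collection} are satisfied.

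Applying the theorem then yields that, almost surely, $\Xi_1$ is empty. Hence there is no flow-adapted finite \emph{non-empty} subset of $\Phi$, which is the claim. The argument is essentially a relabeling, so there is no genuine obstacle; the only point meriting care is matching the definitions, namely checking that a ``flow-adapted finite non-empty subset'' in the sense of the corollary is literally an admissible $\Xi_1\subset\Psi_1$ in the sense of the theorem, and confirming that the infiniteness requirement on $\Psi_1$ is met by $\Phi$ under stationarity with positive intensity.
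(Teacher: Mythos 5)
Your proposal is correct and follows exactly the paper's intended argument: the corollary is just the single-set instance $n=1$, $\Psi_1=\Phi$ of Theorem~\ref{no.finite.invariant.collection}, and your additional check that $\Phi$ is almost surely infinite (from stationarity with positive intensity) is a sensible verification of the one nontrivial hypothesis.
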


\subsection{Partitions of the Support of a Point Process}
\label{sec:parti}

A {\em partition of counting measures},
${\cal T}$, is a map that associates to each $\phi\in\n$ 
a partition $\ct(\phi) = \{T_n(\phi); n\in \Bbb N\}$ 
of the support of $\phi$ into a countable collection of non-empty sets.

This partition is flow-adapted if for all $\phi\in\n$ and all $t\in\sp$,
\eqn{{\ct} (\phi) =
\{ T_n(\phi),\ n\in \Bbb  N\} \Rightarrow
\ct(\theta_t\phi)= \{ \theta_t T_n(\phi); n\in \Bbb N\}.}
One of the simplest cases of flow-adapted partitions
is the {\em singleton partition}; i.e., 
\eqn{\ct(\phi)=\{\{t\}; t\in\phi\}.}
For a partition $\ct$,
the element of $\ct(\phi)$ that contains $t\in\phi$ is denoted 
by $\ct_t(\phi)$. Using this notation, it is easy to see that 
each flow-adapted partition ${\cal T}$ of counting measures
is fully characterized by a measurable map $\ct_0:\n^0\to\n^0$.  Indeed, 
\begin{equation} {\cal T}_t(\phi)= \theta_{-t}\ct_0(\theta_t \phi) = t+\ct_0(\theta_t \phi).
\end{equation}

An enumeration of the elements of a set is an 
injective function $\nu$ from from this set to $\Bbb N$
(or equivalently to  $\Z$).
There are several enumerations of the elements of the partition ${\cal T}$;
e.g. based on the distance to the origin. Any element $T$ of the partition
is a countable collection of points of $\phi$. Since $\phi$
has no accumulation points, one can define the distance 
of $T$ to the origin as the minimum of the distances from
the points of $T$ to the origin. If the set of distances to 
the sets of the partition are all different, one defines
$T_0$ as the element of the partition with the smallest distance
to the origin, $T_1$ as the one with the
second smallest distance to the origin, and so on.
Ties are treated in the usual way, e.g. by using lexicographic ordering.
Note that this enumeration is not flow-adapted.

A natural question
is about the existence of translation
invariant enumerations.
This is not always granted.
For example, it is well known, and can be seen from
Corollary~\ref{no.finite.invariant.subset}, that
the singleton partition of a stationary point process $(\Phi,\p)$ 
cannot be enumerated in a measurable and flow-adapted manner.

\begin{defi}
\label{de:marka}
A flow-adapted partition of a
stationary point process
will be said \emph{markable}
if there exists an enumeration of the elements of the
partition which is invariant by translations.
\end{defi}

\begin{prop}
The flow-adapted partition $\ct$ of the stationary point process $(\Phi,\p)$ is 
markable if and only if $(\Phi,\p)$ can be partitioned as
\eq{\label{markable.decomposition}\Phi=\bigcup_{i=1}^n \Phi_i,\quad n\in\overline{\Bbb N},} 
where $(\Phi_i,\p)$ is a sub-stationary point process such that  
its support is an element of $\ct$.
\end{prop}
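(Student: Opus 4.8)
The plan is to show that the two stated conditions are two encodings of the same object, namely a flow-adapted labelling of the elements of $\ct$, so that the proof reduces to translating each description into the other and checking measurability and flow-adaptedness. The convenient intermediate device is a measurable \emph{point mark} $m(\phi,x)$, defined for $\phi\in\n$ and $x\in\phi$, that is constant on every element of $\ct(\phi)$ and takes pairwise distinct values on distinct elements. An enumeration of $\ct$ that is invariant by translations is, after composing the labelling of elements with the map $x\mapsto\ct_x(\phi)$, exactly such an $m$ obeying the flow-adaptedness relation $m(\theta_t\phi,\theta_t x)=m(\phi,x)$ for all $t\in\sp$; here I would use the flow-adaptedness of $\ct$ in the form $\ct_{\theta_t x}(\theta_t\phi)=\theta_t\ct_x(\phi)$, which is the content of the characterization $\ct_t(\phi)=t+\ct_0(\theta_t\phi)$ recorded above. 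I would begin by stating this reformulation so that both implications can be phrased in terms of $m$.

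For the implication from \eqref{markable.decomposition} to markability, assume the decomposition $\Phi=\bigcup_{i=1}^n\Phi_i$. For $x\in\Phi$ let $m(\Phi,x)$ be the unique index $i$ with $x\in\Phi_i$; this is well defined because the supports of the $\Phi_i$ partition that of $\Phi$, it is measurable because each $\Phi_i$ is a measurable sub-point-process, and it is constant on and injective across the elements of $\ct$ because each support is a single element of $\ct$. The stationarity relation $\Phi_i(\theta_t\om)=\theta_t\Phi_i(\om)$ then translates directly into $m(\theta_t\phi,\theta_t x)=m(\phi,x)$, so $m$ is a translation-invariant enumeration and $\ct$ is markable.

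Conversely, starting from a translation-invariant enumeration encoded as $m$, I would set $\Phi_i:=\{x\in\Phi;\ m(\Phi,x)=i\}$ for each value $i$ attained by $m$, letting $n$ be the number of such values (possibly random, possibly infinite). Each $\Phi_i$ is measurable, and its support is the unique element of $\ct$ carrying the label $i$, by the injectivity of the enumeration. Rewriting $m(\theta_t\phi,\theta_t x)=m(\phi,x)$ as $m(\theta_t\phi,x)=m(\phi,x+t)$ and using the stationarity of $\Phi$ gives $\Phi_i(\theta_t\om)=\theta_t\Phi_i(\om)$; thus each $(\Phi_i,\p)$ is a sub-stationary point process whose support is an element of $\ct$, and the $\Phi_i$ partition $\Phi$ as in \eqref{markable.decomposition}.

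The mathematics here is light, so I expect the main obstacle to be the formalization rather than any estimate. The delicate point is the precise meaning of ``enumeration invariant by translations'': it must be read as a jointly measurable point mark satisfying $m(\theta_t\phi,\theta_t x)=m(\phi,x)$, not merely as a separate relabelling for each fixed $\phi$, and the two measurability claims --- that the $m$ built from the $\Phi_i$ is jointly measurable, and that the level sets $\Phi_i$ of a measurable $m$ are measurable point processes --- are what make the equivalence genuine rather than formal. A secondary bookkeeping issue is that $n=n(\Phi)$ may be random and equal to $\infty$; I would handle this by letting $i$ range only over the labels actually attained, which is consistent with the index $n\in\overline{\mathbb N}$ appearing in \eqref{markable.decomposition}.
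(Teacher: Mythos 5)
Your proof is correct and follows essentially the same route as the paper's: both directions simply translate between the translation-invariant enumeration $\nu$ and the decomposition $\Phi=\bigcup_i\Phi_i$, with $\Phi_i=\nu^{-1}(i)$ in one direction and $\nu(T)=i$ for the $T$ supporting $\Phi_i$ in the other. Your reformulation of the enumeration as a flow-adapted point mark $m(\phi,x)$ and your explicit attention to measurability only spell out details the paper leaves implicit.
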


\begin{proof}
Assume $\ct$ is markable with the enumeration function $\nu$. 
Then, 
\eqn{\Phi=\bigcup_{i=1}^n \Phi_i:=\bigcup_{i=1}^n\nu^{-1}(i).}
The injectivity of $\nu$ implies $\nu^{-1}(i)$ is either
an element of $\ct$ or the empty set with positive probability.
In addition, the translation 
invariance of $\nu$ gives $\nu^{-1}(i)$ is a sub-stationary point process. 

On the other hand if $\Phi$ possesses a \emph{markable
decomposition}~(\ref{markable.decomposition}), then
the function $\nu$ which assigns to each
element $T$ of $\ct$, the index $i$ for which the support
of $\Phi_i$ is $T$, is an enumeration function of for $\ct$ 
and hence $\ct$ is markable. 
\end{proof}

The reason for this terminology is that if the partition is defined
by a selection of the points of $\Phi$ based on marks (see e.g.
\cite{DaVe08} for the definition of marks of a point process),
then such an enumeration exists\footnote{In fact the following
result holds: there exists an enumeration invariant by translation
if and only if there exists a decomposition of the stationary point process
into a collection of stationary sub-point processes with  
disjoint supports and with positive intensities. The proof of this result
is skipped as it will not be used below.}.
For instance, the singleton partition of a stationary point process is
flow-adapted but is not a markable partition.  

\begin{defi}\label{partition.preserving.ps}
Let ${\cal T}$ be a flow-adapted partition of the support of $\Phi$.
Let $\h$ be a  \pom{}. One says $\h$ \emph{preserves} ${\cal T}$
if for all $T\in{\cal T}$, $\h^{-1}(T)=T$.
If $\h$ is bijective, this is equivalent to the property
that for all $T\in{\cal T}$, $\h(T)=T$.
\end{defi}
\begin{defi}\label{stable.group}
Let $\Gamma_{\cal T}:=\Gamma_{\cal T}(\Phi)$ be the set of all bijective
and $\cal T$-preserving \pom s. The set $\Gamma_{\cal T}$ can be equipped
with a group structure by composition of \pom s.
This group, which is as subgroup
of the symmetric group on the support
of $\Phi$, is called the \emph{$\cal T$-stable group}.
An element $\h$ of this $\cal T$-stable group is said \emph{$\cal T$-dense}
if $\p$-\asy, for all $x\in\phi$, the orbit of $x$ under $\h$
spans the whole set of the partition that contains $x$; i.e.,
\eqn{\{H^n(x);n\in\Bbb Z\}=\ct_x(\phi).}
\end{defi}

\section{\PoMk{} Foliations}
\label{sec:psfo}
This section introduces two dynamics associated with
a flow-adapted \pom{} $\f=\f_\phi$ (or equivalently to its associated \pomk{} $\g$)
and discuss the associated foliations.
\begin{enumerate}
\item \label{ds1} For all fixed $\phi\in \n$, consider
the map $g=\f_\phi$, 
from the discrete space $\mathrm {support} (\phi)$ to itself.
The $\f_{\phi}$-foliation of $\phi$ is a partition of the set
$\mathrm {support} (\phi)$. It will be denoted by $\L [\phi]{\f_{\phi}}$.
The set of connected components will be denoted by
$\C [\phi]{\f_{\phi}}$. 
Whenever the context allows it, the subscript $\phi$ is dropped, so that
the latter set is denoted by $\C \f$
and the former by $\L \f$.
\item \label{ds2} $(\n^0,\thetg)$: for all $\phi\in\n^0$,
let $g(\phi)=\thetg\phi:=\theta_{\g(\phi)}\phi$. The map $\thetg$
is a measurable dynamics on $\n^0$, a non-discrete measure space.
The definition of the $\thetg$-foliation is nevertheless that
of Definition \ref{foliation}\footnote{ Rather than that of the stable
manifold alluded to in Remark \ref{remhom}}. The reason for this
choice of definition is given in Corollary \ref{folfg} below.
The associated foliation (resp. set of
connected components) is denoted by
$\L[\n^0]\thetg$ or simply by $\Ltg$ (resp.
$\C[\n^0]\thetg$ or $\Ctg$). 
Note that this partition of $\n^0$ is very different
in nature from that discussed for dynamics 1 above:
each connected component of $\Ltg$
(and hence each foil or each component of the graph $G^{\thetg}$)
is still discrete, whereas $\n^0$ is a non-countable set. 
So the number of connected components of this foliation must be non-countable.
\end{enumerate}
Although $\L \f$ and $\L\thetg$ are defined  on different spaces, they
are closely related because of the following statement, which follows from   
the compatibility of the point-shift $\f$.
\begin{cor}\label{folfg}
\eq{\label{fgfol}x\sim_{\f_{\phi}} y \Leftrightarrow \theta_x\phi\sim_{\thetg}\theta_y\phi.} 
\end{cor}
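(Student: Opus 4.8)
The plan is to build an explicit \emph{conjugacy} between the two dynamics through the map $x\mapsto\theta_x\phi$, which sends a point $x$ of the support of $\phi$ to the element $\theta_x\phi\in\n^0$ (note that $\theta_x\phi$ does lie in $\n^0$, since $x\in\phi$ forces an atom at the origin). The single key computation is that one step of $\thetg$ applied to $\theta_x\phi$ reproduces one step of $\f_\phi$ applied to $x$: using the defining relation $\f_\phi(x)=\g(\theta_x\phi)+x$ together with the flow law $\theta_s\theta_t=\theta_{s+t}$,
\eqn{\thetg(\theta_x\phi)=\theta_{\g(\theta_x\phi)}(\theta_x\phi)=\theta_{\g(\theta_x\phi)+x}\phi=\theta_{\f_\phi(x)}\phi.}
Applying this repeatedly, with $x$ replaced at each stage by $\f_\phi^k(x)$ (which is again a point of $\phi$), an immediate induction on $n$ yields the intertwining relation
\eqn{\thetg^n(\theta_x\phi)=\theta_{\f_\phi^n(x)}\phi,\qquad n\ge 0,\ x\in\phi,}
the base case $n=0$ being a tautology. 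This identity is the precise sense in which the compatibility of $\f$ enters, and I expect everything to reduce to it.

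The forward implication is then immediate. If $x\sim_{\f_\phi}y$, I pick $n$ with $\f_\phi^n(x)=\f_\phi^n(y)$; the intertwining relation gives $\thetg^n(\theta_x\phi)=\theta_{\f_\phi^n(x)}\phi=\theta_{\f_\phi^n(y)}\phi=\thetg^n(\theta_y\phi)$, whence $\theta_x\phi\sim_\thetg\theta_y\phi$. No injectivity is needed here, since equal points trivially have equal translates.

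The reverse implication is where the real work lies, and it is the step I expect to be the main obstacle. If $\theta_x\phi\sim_\thetg\theta_y\phi$, I choose $n$ with $\thetg^n(\theta_x\phi)=\thetg^n(\theta_y\phi)$, and the intertwining relation turns this into $\theta_{\f_\phi^n(x)}\phi=\theta_{\f_\phi^n(y)}\phi$; to conclude $\f_\phi^n(x)=\f_\phi^n(y)$ (hence $x\sim_{\f_\phi}y$) I must invert the map $z\mapsto\theta_z\phi$ on the support of $\phi$. This is exactly the point that can fail: $\theta_a\phi=\theta_b\phi$ with $a\neq b$ both in $\phi$ forces $\phi$ to be invariant under the nonzero translation $a-b$, i.e.\ periodic, and for such $\phi$ the equivalence genuinely breaks down (think of a pure translation acting on the lattice). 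The substantive task is therefore to justify the injectivity of $z\mapsto\theta_z\phi$ on $\mathrm{support}(\phi)$, i.e.\ the aperiodicity of $\phi$. For the point processes considered here this holds almost surely, so I would record the aperiodicity hypothesis explicitly and state the corollary for almost every $\phi$ (under either the stationary law or the Palm probability $\p_\Phi$), rather than claim the identity for every fixed $\phi$.
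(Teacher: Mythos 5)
Your argument is correct and is precisely what the paper intends: the paper gives no proof beyond the remark that the corollary ``follows from the compatibility of the point-shift,'' and your intertwining identity $\thetg^n(\theta_x\phi)=\theta_{\f_\phi^n(x)}\phi$ (an immediate induction from $\f_\phi(x)=\g(\theta_x\phi)+x$ and $\theta_s\theta_t=\theta_{s+t}$) is exactly the content of that remark, giving the forward implication at once. Your caveat on the converse is also well founded and is a point the paper leaves implicit: passing from $\theta_{\f_\phi^n(x)}\phi=\theta_{\f_\phi^n(y)}\phi$ back to $\f_\phi^n(x)=\f_\phi^n(y)$ requires the injectivity of $z\mapsto\theta_z\phi$ on the support of $\phi$, i.e.\ aperiodicity; this holds almost surely for the Poisson and Bernoulli-grid ($p<1$) examples used throughout, but fails for a periodic realization (and indeed for the stationary randomly shifted lattice, where all foils of $\sim_{\thetg}$ collapse), so stating the equivalence for $\fp$-almost every $\phi$ under an aperiodicity hypothesis, as you propose, is a legitimate sharpening rather than an error on your part.
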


\begin{exm}
Consider the Next Row \pom{}, R, on $d$-dimensional Bernoulli grid defined in
Subsection \ref{ssGrid}. If $p\in(0,1)$, one can show that 
\eqn{L^R(x_1,\ldots,x_d) = \{(x_1,y,x_3\ldots,x_d)\in \Phi\},}
and 
\eqn{C^R(x_1,\ldots,x_d) = \{(y_1,y_2,x_3\ldots,x_d)\in \Phi\}.}
Thus each foil looks like a 1-dimensional Bernoulli grid and each
connected component looks like a 2-dimensional Bernoulli grid (Figure~\ref{next.row.foils}). If $d=2$,
the graph $G^R$ has a singe connected component.  
\begin{figure}[h]
\centering
\includegraphics[width=0.5\linewidth]{{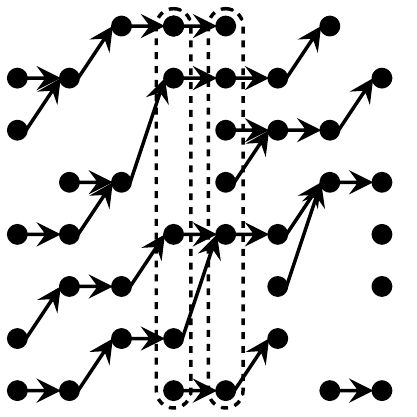}}
\caption{The Next Row \pom{} on 2-dimensional Bernoulli grid. Dashed lines indicates 
two foils of this \pom.}
\label{next.row.foils}
\end{figure}

\end{exm}

Consider now a stationary point process $(\Phi,\p)$,
with Palm version denoted by $(\Phi,\fp)$.
The expectation with respect to $\p$ (resp. $\fp)$
is denoted by $\e$ (resp. $\fe$). 
The above dynamics lead to the following stochastic objects:
\begin{enumerate}
\item $\f_\Phi$ is a random map from the discrete random set
$\mathrm {support}(\Phi)$ to itself.
Both the component partition $\C\f =\C [\Phi]{\f_{\Phi}}$ and
the foil partition $\L\f = \L [\Phi]{\f_{\Phi}}$ are flow-adapted
partitions of this random set,
with the latter being a refinement of the former.
\item $\L\thetg=\L[\n^0]\thetg$ is a deterministic
partition of the whole set $\n^0$
(in contrast to the random partition of a random set described above).
Note however that it is sufficient that $\thetg$ be defined
$\fp$-almost surely and hence it may be undefined
for some elements of $\n^0$ of null measure for $\fp$.
\end{enumerate}
Here are some observations 
on the flow-adapted partitions
$\C{\f}$
and $\L {\f}$.
These two partitions do not depend on $\p$ at all (since
they are defined on realizations). In particular, they do not
depend on whether the point process is considered under $\p$ or $\fp$. 

The elements of each of these two partitions can be enumerated in a natural
way following the method discussed just before Definition \ref{de:marka}.
 
The dichotomy of Section \ref{sec:parti} applies:
there are cases where $\L {\f}$ (resp. $\C {\f}$)
is a markable partition
and cases where it is not\footnote{The partition $\L [\Phi]{\f_{\Phi}}$
gathers points whose marks are in the
same equivalence class w.r.t. some equivalence relation.  
This does not mean that this partition is markable.}.
A simple instance of the latter case
is obtained when $\f$ is bijective;
then the foil partition coincides with the singleton partition,
which is not a markable partition.

The following solidarity properties hold:
\begin{prop}
\label{thdic}
If the foil partition $\L{\f}$ is markable, 
so is the component partition $\C{\f}$.
Conversely, if $C$ is a component which is the support
of a flow-adapted point process $\Xi$, then
either the foil partition of $\Xi$, $\L [\Xi]{\f_{\Xi}}$
is markable or there is no flow-adapted point process
with a positive intensity
having for support a foil of $\L [\Xi]{\f_{\Xi}}$.
\end{prop}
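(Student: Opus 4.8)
The plan is to prove the two implications of Proposition~\ref{thdic} separately, relying on the characterization of markability established in the Proposition preceding it: a flow-adapted partition $\ct$ of $(\Phi,\p)$ is markable if and only if $\Phi$ admits a decomposition $\Phi=\bigcup_{i=1}^n\Phi_i$ into sub-stationary point processes whose supports are elements of $\ct$. The key technical tool throughout will be Corollary~\ref{no.finite.invariant.subset} (no finite non-empty flow-adapted selection of points exists), together with its extension Theorem~\ref{no.finite.invariant.collection}.

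First I would handle the forward direction: assuming $\L{\f}$ is markable, produce a markable decomposition of the coarser partition $\C{\f}$. Since $\L{\f}$ is markable, fix a translation-invariant enumeration $\nu$ of the foils. Because $\L{\f}$ refines $\C{\f}$ (stated in Definition~\ref{def-graph} and recalled after Corollary~\ref{folfg}), each connected component is a union of foils, so it is natural to build an enumeration of components out of $\nu$. The idea is to select, within each component $C$, a canonical ``marker'' foil in a flow-adapted way---for instance the foil of smallest $\nu$-index among those lying in $C$, provided such a minimum exists---and then enumerate components by the index of their marker foils. The delicate point is that a component may contain infinitely many foils with no $\nu$-minimal one (this is exactly the $\Z$-type seniority order of Definition~\ref{foils.order}); in that case one cannot pick a least foil. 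I would circumvent this by arguing that the foil-enumeration $\nu$ restricted to a single component, combined with the component's graph structure, still yields a flow-adapted choice: each nonempty foil is a sub-stationary point process, and uniting the foils of a fixed $\nu$-index across all components gives a flow-adapted sub-point-process whose support meets each relevant component, from which a flow-adapted enumeration of components can be read off. Concretely, the decomposition $\Phi=\bigcup_i\Phi_i$ witnessing markability of $\L{\f}$ can be coarsened to $\Phi=\bigcup_j\Psi_j$ by grouping the $\Phi_i$ according to the component they generate, and one checks each $\Psi_j$ is sub-stationary with support an element of $\C{\f}$.

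For the converse I would argue by contradiction on a single component $C$ that is itself the support of a flow-adapted point process $\Xi$. Suppose that $\L[\Xi]{\f_\Xi}$ is \emph{not} markable, yet there exists a flow-adapted point process $\Theta$ of positive intensity whose support is a single foil of $\L[\Xi]{\f_\Xi}$. I want to derive a contradiction. The engine is the seniority order on the foils of the acyclic case (Definition~\ref{foils.order}) and the class maps $\l[+]\ff$, $\l[-]\ff$: from one flow-adapted foil $\Theta$ one can generate neighboring foils---the senior foil $\l[+]{\f}$ and junior foils $\l[-]{\f}$---each again flow-adapted, thereby producing a flow-adapted enumeration of all foils of $C$ along the linear seniority order (of type $\Z$ or $\N$). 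This would make $\L[\Xi]{\f_\Xi}$ markable, contradicting the hypothesis. The remaining case is when $C$ is not acyclic, i.e. it contains a cycle $K(C)$ by Lemma~\ref{cyclic.component}; there the primeval/cyclic points give a canonically distinguished finite-or-periodic flow-adapted set that anchors the enumeration, and one again recovers markability, contradiction.

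I expect the main obstacle to be the step where a flow-adapted enumeration of foils is built from the single given foil $\Theta$ in the converse. The subtlety is that passing from a foil to its junior foil $\l[-]{\f}$ is only well-defined when such points exist, and the seniority order may be of type $\Z$ (no minimal foil) or of type $\N$ (a minimal one), so one must argue uniformly across components and rule out the degenerate possibility of ``finite'' flow-adapted selections, which is precisely where Theorem~\ref{no.finite.invariant.collection} and Corollary~\ref{no.finite.invariant.subset} are invoked to forbid a finite non-empty flow-adapted inclusion. Ensuring that the constructed senior/junior foils genuinely have positive intensity (rather than collapsing to null sets) and that the induction along the seniority order produces a \emph{global} flow-adapted enumeration, not merely a local one, is the crux; I would treat it by exhibiting the chain of foils as the orbit structure of a measure-preserving point-shift and appealing to the mass-transport principle to propagate positivity of intensity from $\Theta$ to all foils in its component.
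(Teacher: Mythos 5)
Your proof follows essentially the same route as the paper: the forward direction by coarsening the markable foil decomposition into one indexed by components (the worry about a missing $\nu$-minimal foil is moot, since $\nu$ takes values in $\mathbb N$ and so has a minimum on any nonempty set of indices), and the converse by propagating flow-adaptedness from a single foil to all senior foils via $\f$ and junior foils via $\f^{-1}$ along the seniority order of Definition~\ref{foils.order}. The paper's version is terser --- it treats the forward direction as immediate and gets positivity of intensity for free from nonemptiness of the support rather than via mass transport --- but your argument is the same in substance.
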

\begin{proof}
The first assertion is immediate.
The proof of the converse leverages the foil order 
introduced in Subsection \ref{sec:ford}.
First observe that $\Xi$ has a single component.
Assume that, for some foil $L$ of $\Xi$,
the point process $\Psi(L)$ with support $L$
is a flow-adapted point process.
Then $\Psi(L_+) =\Psi(\f(L))$
is a flow-adapted point process with non empty support (as $L$ is non empty)
and hence with positive intensity.
Hence all foils that are senior to $L$ are flow-adapted point processes
with a positive intensity.
Similarly, either $L_-$ is empty, and then there is no foil
junior to $L$, 
or $\Psi(L_-) =\Psi(\f^{-1}(L))$ is also
a flow-adapted point process with a positive intensity. 
It then follows that the foil partition of $\Xi$ is markable.
\end{proof}

\begin{rem}
\label{rem:enumf}
Under $\fp$, the foil order leads to a natural
enumeration of the foils of the component of the origin.
The foil of the origin is numbered 0 and will be
denoted by $L^\f(0)=L_0$, the foil senior (resp. junior)
to it will be numbered 1
and will be denoted by $L_1$ (resp. $L_{-1}$
if non empty), and so on.
Note that this enumeration is not flow-adapted.
\end{rem}

\section{\PoMk{} Cardinality Classification}
\label{sec:cacl}
In the rest of this work $(\Phi,\p)$ is a stationary point process 
with Palm version $(\Phi,\fp)$.

The foliation $\L {\f}$ partitions the
support of the point process $\Phi$
into a discrete set of connected components;
each component is in turn decomposed in a discrete
set of $\f$-foils, and each foil
in a set of points. The present subsection proposes a classification 
of point-maps based on the cardinality of these sets.

\subsection{Connected Components}
The cardinality classification
of connected components of the two dynamics differ.

The partition $\C[\Phi] {\f_{\Phi}}$
is countable. Its cardinality is a random variable
with support on the positive integers and possibly infinite.
If $(\Phi,\p)$ is ergodic, this is a positive constant or $\infty$ \asy.

As already mentioned, in contrast, the partition
$\C\thetg$ is deterministic and non-countable in general.

\subsection{Inside Connected Components}
In view of Corollary \ref{folfg}, the
cardinality classification 
of the foils belonging to a given connected component
is the same for $\L [\Phi]{\f_{\Phi}}$ and for $\L\thetg$.

It is easy to see that the cardinality of the set of foils of a component
is a random variable with support in $\overline{\Bbb N}= \Bbb N\cup\{\infty\}$. 
The same holds true for the set
of points of a non-empty foil. The following theorem shows that
only a few combinations are however possible:
\begin{thm}[Cardinality classification of a connected component]\label{cardinality.components}
Let $(\Phi,\p)$ be a stationary point process. Then $\p$
almost surely,  each connected component $C$ of $G^\f(\Phi)$
is in one of the three following classes:
\begin{description}
\item[Class~${\mathcal F}/{\mathcal F}$:] \label{finite} $C$ is finite, and hence so is each
of its $\f$-foils. In this case,
when denoting by $1\le n=n(C)<\infty$ the number of its foils: 
\begin{itemize}
\item $C$ has a unique cycle of length $n$;
\item $\f^\infty(C)$ is the set of vertices of this cycle.
\end{itemize}
\item[Class~${\mathcal I}/{\mathcal F}$:]\label{infinite.finite} $C$ is infinite
and each of its $\f$-foils is finite. In this case:
\begin{itemize}
\item $C$ is acyclic;
\item Each foil has a junior foil, i.e., a predecessor for the order
of Definition~\ref{foils.order};
\item $\f^\infty(C)$ is a unique {\em bi-infinite path,}
i.e., a sequence of points $(x_n)_{n\in \Z}$ of $\phi$
such that $\f_\phi(x_n)=x_{n+1}$ for all $n$.
\end{itemize}
\item[Class~${\mathcal I}/{\mathcal I}$:]\label{infinite.infinite} $C$ is infinite and
all its $\f$-foils are infinite. In this case:
\begin{itemize}
\item $C$ is acyclic;
\item $\f^\infty(C)=\emptyset$.
\end{itemize}
\end{description}
\end{thm}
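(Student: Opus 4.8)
The plan is to classify each connected component $C$ of the directed graph $G^\f(\Phi)$ by combining the deterministic graph-theoretic structure from Lemma~\ref{cyclic.component} with the probabilistic non-existence result of Theorem~\ref{no.finite.invariant.collection} (equivalently Corollary~\ref{no.finite.invariant.subset}). The deterministic input is that every vertex has out-degree one, so by Lemma~\ref{cyclic.component} each component is either an infinite tree or contains exactly one directed cycle. **First I would** dispose of the cyclic case: if $C$ has a cycle $K(C)$ of length $n$, then every vertex eventually maps into $K(C)$, and I claim $C$ must be finite. This is where the stationarity enters. The set of vertices lying on cycles of $G^\f$ is a flow-adapted subset of $\Phi$, and the map sending each cyclic point to the start of its cycle's foil gives structure one can test against Corollary~\ref{no.finite.invariant.subset}; the cleaner route is to observe that $\f^\infty(C)$ equals the cycle vertices, and to show directly that an \emph{infinite} cyclic component is impossible by exhibiting a finite flow-adapted selection (e.g. the cycle itself, or a single foil), contradicting the corollary. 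Once $C$ is finite, the cycle has length $n$, $\f^\infty(C)$ is exactly the set of cycle vertices, and since the foils partition $C$ by long-term behavior, the number of foils equals $n$. This yields Class~$\mathcal{F}/\mathcal{F}$.

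**Next I would** treat the acyclic (infinite tree) case, which splits according to whether the foils are finite or infinite, and I would show these are the only options, i.e. a component cannot have some finite and some infinite foils. The key tool again is Corollary~\ref{folfg} together with the foil order of Definition~\ref{foils.order}: since $\l[+]\f$ and $\l[-]\f$ are class objects, the finiteness of a single foil propagates up and down the seniority order. Concretely, if one foil is finite, I would argue that every foil is finite by a solidarity argument: the image of a finite foil $L$ under $\f$ has cardinality at most $|L|$, so senior foils are finite; and for junior foils one uses that the preimage of a foil point has finite out-degree structure. The mixed case (some finite, some infinite foils in one component) is ruled out because the collection of infinite foils, if nonempty and proper, would yield a flow-adapted collection violating Theorem~\ref{no.finite.invariant.collection} when intersected appropriately—this is the delicate point.

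**The hard part will be** establishing the fine structure of $\f^\infty(C)$ in the two infinite classes and, relatedly, ruling out the mixed-foil scenario rigorously. For Class~$\mathcal{I}/\mathcal{F}$ I must show $\f^\infty(C)$ is a single \emph{bi-infinite} path: each finite foil maps into a senior finite foil, so $\bigcap_n \f^n(C)$ is a nested intersection of finite sets whose images stabilize to a sequence closed under $\f$ and its inverse. The existence of a junior foil for every foil, and hence the $\Z$-type order rather than $\N$-type, is exactly the assertion that there is no minimal foil; a minimal foil would be a flow-adapted finite selection (the bottom generation), contradicting Corollary~\ref{no.finite.invariant.subset}. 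For Class~$\mathcal{I}/\mathcal{I}$ I would show $\f^\infty(C)=\emptyset$: if it were nonempty, the primeval points would form a flow-adapted subset on which $\f$ acts bijectively, and tracing back the infinite foils would again let me extract a finite flow-adapted inclusion, contradicting Theorem~\ref{no.finite.invariant.collection}. **I expect** the main obstacle to be formulating each ``contradiction via a finite flow-adapted selection'' so that the selection is genuinely measurable and flow-adapted in the sense required by Lemma~\ref{counting.given.point}, since the naive selections (cycle vertices, bottom generation, primeval points) must be checked to be non-empty with positive probability on the relevant event before Theorem~\ref{no.finite.invariant.collection} can be invoked.
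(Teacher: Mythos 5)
Your overall strategy --- Lemma \ref{cyclic.component} for the deterministic skeleton plus Theorem \ref{no.finite.invariant.collection} to kill finite flow-adapted selections --- is indeed the paper's strategy, and your treatment of the cyclic case and of the absence of a minimal foil in Class~${\mathcal I}/{\mathcal F}$ matches the paper. But two steps are genuinely wrong or missing. First, your solidarity argument goes in the wrong direction: from $|\f(L)|\le|L|$ you conclude that the senior foil of a finite foil is finite, but $\l[+]\f(x)$ is \emph{not} equal to $\f(\l\f(x))$; it also contains every point of the senior generation having no descendants, and there can be infinitely many of these, so a finite foil can sit just below an infinite one. (If finiteness really propagated both up and down, mixed components would be impossible deterministically and no probabilistic input would be needed --- a sign that something is off.) The correct statement, which the paper derives from the a.s.\ finiteness of in-degrees (Proposition \ref{mn1}, Corollary \ref{monotone.cardinal.foils}), is that \emph{infinite} foils propagate toward seniority; hence in a mixed component the finite foils form a set bounded above in the seniority order, with a maximal element, and it is this \emph{most senior finite foil} that furnishes the finite flow-adapted selection contradicting Theorem \ref{no.finite.invariant.collection}. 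Your proposed selection (``the collection of infinite foils \dots intersected appropriately'') does not obviously produce anything finite.

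Second, and more seriously, your argument that $\f^\infty(C)=\emptyset$ in Class~${\mathcal I}/{\mathcal I}$ has a gap: you propose to ``extract a finite flow-adapted inclusion'' from the primeval points, but $\f^\infty(C)\cap L$ need not be finite for a foil $L$ --- consider a component isomorphic to the tree in which every vertex has two sons, where $\f^\infty(C)=C$ and every foil is infinite. This configuration is exactly what must be excluded, and excluding it is the hard part of the paper's proof. The paper proceeds by showing (via K\"onig's infinity lemma and the a.s.\ finiteness of degrees) that a nonempty $\f^\infty(C)$ forces a bi-infinite path, and then splits into cases: finitely many bi-infinite paths intersect each infinite foil in a finite nonempty set, contradicting Theorem \ref{no.finite.invariant.collection}; infinitely many are ruled out by building, from the junction set $J$ of points where two such paths merge, a point-shift that is surjective but not injective, contradicting Corollary \ref{surjective.injective}. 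Neither of these two ideas appears in your outline, so the ${\mathcal I}/{\mathcal I}$ case is not established.
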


The following definitions will be used:

\begin{defi}\label{defev}
Let $C$ be a connected component of $G^\f(\Phi)$.
The point-shift \emph{evaporates}
$C$ if $\f_\Phi^\infty(C)=\emptyset$ \asy.
\end{defi}

It follows from Theorem \ref{cardinality.components} that:
\begin{cor}
\label{coreva}
The \pom{} $\f$ evaporates $C$ if and only if
$C$ is of Class~${\mathcal I}/{\mathcal I}$.
\end{cor}
Before giving the proof of Theorem \ref{cardinality.components},
a collection of preliminary results
(Proposition \ref{mn1} to Corollary \ref{connected.so.tree}) is presented.

\begin{prop}\label{mn1}
Let  $(\Phi,\p)$ be a  stationary point process 
with Palm probability $\fp$. Let $d_n(0)$ and $d(0)$ be as in
Definition~\ref{defmn} for the $\f$-foliation of $\phi=\Phi(\om)$.
One has 
\eq{\label{ftr}\forall n\ge 0:\ep\lk d_n(0)\rk=1.}
In particular, for all $n$, $d_n(0)$ is $\fp$-almost surely finite.
If, in addition, $G^\f(\Phi)$ is $\fp$-almost surely acyclic, then
\eqn{\ep\lk d(0)\rk=\infty.} 
\end{prop}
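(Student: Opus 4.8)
The plan is to establish $\ep\lk d_n(0)\rk=1$ as an instance of the mass transport principle of Palm calculus, and then to obtain $\ep\lk d(0)\rk=\infty$ by summing this identity over all generations. Fix $n\ge 0$ and, for $x,y\in\phi$, introduce the transport kernel
\[
h(\phi,x,y)=\ind\lk \f_\phi^n(x)=y\rk ,
\]
read as ``the point $x$ ships one unit of mass to its $n$-th image $\f_\phi^n(x)$''. Iterating the compatibility relation \eqref{compf} yields $\f_{\theta_t\phi}^n(x-t)=\f_\phi^n(x)-t$, hence $h(\theta_t\phi,x-t,y-t)=h(\phi,x,y)$; this translation invariance is precisely the hypothesis needed to apply the mass transport principle.

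The kernel is normalized so that every point emits exactly one unit of mass, $\sum_{y\in\phi}h(\phi,x,y)=1$, because $\f_\phi^n(x)$ is a single well-defined point of $\phi$; and the total mass received at a point $y$ is $\sum_{x\in\phi}h(\phi,x,y)=\mathrm{Card}\{x\in\phi:\f_\phi^n(x)=y\}=d_n(y)$. The mass transport principle under $\fp$ (the common intensity factor cancels, $\Phi$ having finite positive intensity) then reads
\[
\ep\lk\,\sum_{y\in\Phi}h(\Phi,0,y)\,\rk=\ep\lk\,\sum_{x\in\Phi}h(\Phi,x,0)\,\rk .
\]
Under $\fp$ the origin is an atom of $\Phi$, so the left-hand side is $\ep\lk 1\rk=1$, while the right-hand side is $\ep\lk d_n(0)\rk$. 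This gives $\ep\lk d_n(0)\rk=1$; being nonnegative with finite expectation, $d_n(0)$ is $\fp$-almost surely finite.

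For the last assertion I would first note that, on the almost sure event that $G^\f(\Phi)$ is acyclic, the sets $\{D_n(0)\}_{n\ge 1}$ are pairwise disjoint: if $y\in D_n(0)\cap D_m(0)$ with $n<m$, then $\f_\phi^{m-n}(0)=\f_\phi^m(y)=0$, producing a directed cycle through the origin and contradicting the tree structure. Consequently $d(0)=\sum_{n\ge 1}d_n(0)$ almost surely, and by Tonelli's theorem (all summands are nonnegative)
\[
\ep\lk d(0)\rk=\sum_{n=1}^\infty\ep\lk d_n(0)\rk=\sum_{n=1}^\infty 1=\infty .
\]
There is little genuine difficulty here: the whole argument rests on the mass transport step, so the one point to handle with care is the translation invariance of $h$ and the standing assumption of finite positive intensity that legitimizes the principle. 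Once acyclicity is invoked, the disjointness and the monotone summation are routine.
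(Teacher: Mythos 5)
Your proof is correct and follows essentially the same route as the paper: the identical transport kernel $\ind\{\f_\phi^n(x)=y\}$ fed into the mass transport principle for the first identity, and disjointness of the $D_n(0)$ under acyclicity plus term-by-term summation for the second. You simply spell out the flow-adaptedness check and the disjointness argument that the paper leaves implicit.
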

\begin{proof}
The map
\eqn{w(\phi,x,y):=\ind\{\f^n_\phi(x)=y\}}
is a flow-adapted mass transport (see \cite{LaTh09}).  
The first statement is hence an immediate consequence of Proposition \ref{w}.
For the second part, when $G^\f$ is acyclic,
the $D_n$-s form a partition of $D$ and hence  
\eqn{\ep\lk d(0)\rk=\sum_{n=1}^\infty \ep\lk d_n(0)\rk=\infty.}
\end{proof}
\begin{rem}
Note that $G^\f(\Phi)$ is $\fp$-\asy{} acyclic if and only if  
$G^\f(\Phi)$ is $\p$-\asy{} acyclic.  
\end{rem}

\begin{cor} \label{monotone.cardinal.foils}
Proposition~\ref{mn1} implies that the degrees of all vertices
in $G^\f(\Phi)$ are a.s. finite. Hence \asy, if $l^\f(x)=\infty$,
then for all positive $n$, $l^\f(\f^n_\phi(x))=\infty$.  
\end{cor}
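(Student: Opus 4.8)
The plan is to establish the two claims in turn: first the almost sure finiteness of all vertex degrees in $G^\f(\Phi)$, and then, using this, the propagation of infinite foils along the dynamics $\f_\phi$. For the degrees, I would start from the fact that in $G^\f$ each vertex $x$ has out-degree exactly one, namely the edge $(x,\f_\phi(x))$, so that its undirected degree equals $1+d_1(x)$, where $d_1(x)$ is the in-degree, i.e. the number of points sent to $x$. Proposition~\ref{mn1} gives $\ep\lk d_1(0)\rk=1$, so $d_1(0)$ is $\fp$-almost surely finite. It then remains to pass from the origin to every point: the collection $\{x\in\Phi:\ d_1(x)=\infty\}$ is a flow-adapted subset of $\Phi$, and by the refined Campbell theorem (Palm calculus, see \cite{DaVe08}) its intensity equals $\lambda\,\fp(d_1(0)=\infty)=0$, so it is almost surely empty. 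Hence, almost surely, every vertex of $G^\f(\Phi)$ has finite degree.

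For the propagation claim, the key structural observation is that $\f_\phi$ maps the foil of $x$ into the senior foil $\l[+]\f(x)=\l\f(\f_\phi(x))$. Indeed, compatibility of the equivalence relation with $\f_\phi$ gives that $y\sim_{\f_\phi}x$, say $\f^m_\phi(y)=\f^m_\phi(x)$, implies $\f^m_\phi(\f_\phi(y))=\f^m_\phi(\f_\phi(x))$, that is $\f_\phi(y)\sim_{\f_\phi}\f_\phi(x)$. The fiber of this restricted map over any point $z\in\l[+]\f(x)$ is contained in $D_1(z)$ and is therefore finite by the first claim. Since $\l\f(x)$ is the union of these fibers indexed by the points of $\l[+]\f(x)$, a finite senior foil would exhibit $\l\f(x)$ as a finite union of finite sets, hence as finite. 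Taking the contrapositive yields the one-step implication $l^\f(x)=\infty\Rightarrow l^\f(\f_\phi(x))=\infty$.

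The conclusion for arbitrary $n\ge 1$ then follows by a routine induction, applying the one-step implication to $\f^{n-1}_\phi(x)$ at each stage to propagate infiniteness toward the senior foils $\l\f(\f^n_\phi(x))$. I expect the only point requiring genuine care to be the transfer in the first claim, namely deducing the simultaneous almost sure finiteness of $d_1(x)$ at all points $x\in\Phi$ from its finiteness at the Palm point; this is the standard equivalence, under stationarity, between a property of the typical point and a property holding at every point almost surely, and so should present no real obstacle. The fibering step in the second claim is elementary once the degrees are known to be finite, the only subtlety being to keep the direction of seniority straight, so that infiniteness is indeed seen to propagate toward $\l\f(\f^n_\phi(x))$ rather than away from it.
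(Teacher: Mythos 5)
Your argument is correct and follows exactly the route the paper intends: Proposition~\ref{mn1} gives $\fp$-a.s.\ finiteness of $d_1(0)$, the standard Palm transfer upgrades this to simultaneous finiteness of all in-degrees, and an infinite foil cannot be carried by $\f_\phi$ into a finite senior foil since each point has only finitely many preimages, with induction handling general $n$. The paper states the corollary as immediate; your write-up just supplies the details of that same argument.
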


\begin{cor}\label{surjective.injective}
The point-shift $\f_\phi$ is \asy{} surjective
on the support of $\phi$ if and only if it is \asy{} injective. 
\end{cor}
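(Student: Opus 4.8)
The plan is to reduce both implications to a single statement about the number of $\f$-preimages of the origin under the Palm probability, namely $d_1(0)$, and then to exploit the mass-transport identity of Proposition~\ref{mn1}. First I would record the elementary translations between the two pointwise properties and the behaviour of $d_1$. The \pom{} $\f$ is $\p$-\asy{} injective exactly when no point of $\Phi$ has two distinct preimages, i.e. when the flow-adapted subcollection $\{x\in\Phi:\ d_1(x)\ge 2\}$ is $\p$-\asy{} empty; by the standard correspondence between a flow-adapted subset of $\Phi$ having zero intensity and the origin lying in it with null $\fp$-probability, this is equivalent to $\fp(d_1(0)\ge 2)=0$, that is, to $d_1(0)\le 1$ holding $\fp$-\asy. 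Symmetrically, $\f$ is $\p$-\asy{} surjective exactly when $\{x\in\Phi:\ d_1(x)=0\}$ is $\p$-\asy{} empty, which is equivalent to $\fp(d_1(0)=0)=0$, i.e. to $d_1(0)\ge 1$ holding $\fp$-\asy.

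Next I would invoke Proposition~\ref{mn1} with $n=1$, which gives $\ep\lk d_1(0)\rk=1$ and, in particular, that $d_1(0)$ is a $\fp$-\asy{} finite nonnegative-integer-valued random variable. The heart of the matter is then the purely measure-theoretic remark that such a variable with mean one is $\fp$-\asy{} equal to $1$ as soon as it is $\fp$-\asy{} bounded above by $1$ or below by $1$: if $d_1(0)\le 1$, then $\ep\lk 1-d_1(0)\rk=0$ with $1-d_1(0)\ge 0$ forces $d_1(0)=1$, and if $d_1(0)\ge 1$, then $\ep\lk d_1(0)-1\rk=0$ with $d_1(0)-1\ge 0$ again forces $d_1(0)=1$.

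Combining these two steps yields the corollary. If $\f$ is $\p$-\asy{} injective, then $d_1(0)\le 1$ holds $\fp$-\asy, hence $d_1(0)=1$ holds $\fp$-\asy, hence $\fp(d_1(0)=0)=0$, which by the correspondence above means that $\f$ is $\p$-\asy{} surjective; the converse implication runs identically, starting from $d_1(0)\ge 1$. In both cases one in fact obtains that $\f$ is $\p$-\asy{} bijective, with the origin having exactly one preimage under $\fp$.

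The step I expect to require the most care is the passage between the pointwise statements holding $\p$-\asy{} for all points of $\Phi$ and the single statement about the origin holding $\fp$-\asy. Concretely, one must justify that a flow-adapted subcollection of $\Phi$ is $\p$-\asy{} empty if and only if its intensity vanishes, and that the latter vanishes if and only if the origin belongs to the subcollection with zero $\fp$-probability; this is where stationarity and the definition of the Palm probability $\fp$ are genuinely used, the remainder being the mean-one estimate above.
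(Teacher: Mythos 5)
Your proposal is correct and follows essentially the same route as the paper: both reduce the statement to the observation that surjectivity (resp.\ injectivity) forces $d_1(0)\ge 1$ (resp.\ $d_1(0)\le 1$) $\fp$-\asy{}, and then use $\ep\lk d_1(0)\rk=1$ from Proposition~\ref{mn1} to conclude $d_1(0)=1$ \asy{}, hence bijectivity. The only difference is that you spell out the Palm-versus-stationary correspondence that the paper leaves implicit, which is a worthwhile clarification but not a different argument.
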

\begin{proof}
If $\f_\phi$ is surjective (resp. injective), then \asy{} 
$d_1(0)\geq 1$ (resp. $d_1(0)\leq 1$). Since $\fe[d_1(0)]=1]$,
\asy{} $d_1(0)=1$, and hence the \pom{} is bijective. 
\end{proof}

\begin{prop}\label{infinite.acyclic}
The connected component $C$ of $G^\f(\Phi)$ is acyclic if and only if
it is infinite. Hence $G^\f(\Phi)$ is acyclic if and only if it has
no finite connected component.  
\end{prop}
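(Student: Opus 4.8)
The plan is to prove the two implications of the componentwise equivalence separately, and then deduce the global statement. The direction ``acyclic $\Rightarrow$ infinite'' is immediate: by Lemma~\ref{cyclic.component}, a connected component of $G^\f(\Phi)$ that contains no directed cycle must fall into the first alternative of that lemma, namely it is an infinite tree; in particular it is infinite. No probability is needed here.

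The substance is the converse, ``infinite $\Rightarrow$ acyclic'', which I would establish in contrapositive form: almost surely, every cyclic component is finite. By Lemma~\ref{cyclic.component}, a component $C$ that is not acyclic carries exactly one directed cycle $K(C)$, which is finite and non-empty. The key observation is that $K(C)$ is a finite non-empty subset of $C$ singled out in a flow-adapted way from $\f$ alone, through its intrinsic characterization as the set of periodic points $\{y\in C;\ \exists n\ge 1,\ \f^n_\phi(y)=y\}$. This is precisely the configuration forbidden by Theorem~\ref{no.finite.invariant.collection}.

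Concretely, I would let the index run over the flow-adapted family of connected components $C$ of $G^\f(\Phi)$ that are simultaneously infinite and cyclic, and set $\Psi_C := C$ and $\Xi_C := K(C)$. Since distinct components are disjoint, $\{\Psi_C\}$ is a flow-adapted collection of infinite, pairwise disjoint measurable subsets of $\Phi$ (infinite precisely because we restrict to infinite components), and $\{\Xi_C\}$ is a flow-adapted collection in which each $\Xi_C$ is a finite subset of $\Psi_C$. Theorem~\ref{no.finite.invariant.collection} then forces all $\Xi_C=K(C)$ to be empty almost surely. But a cycle is non-empty by definition, so almost surely there is no infinite cyclic component; equivalently, every cyclic component is finite. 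Combined with the first paragraph, this yields ``$C$ acyclic $\Leftrightarrow$ $C$ infinite''. The final assertion follows at once: $G^\f(\Phi)$ is acyclic iff all its components are acyclic iff all its components are infinite iff $G^\f(\Phi)$ has no finite component.

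The step requiring the most care is the verification that the data $(\{\Psi_C\},\{\Xi_C\})$ genuinely satisfy the measurability and flow-adaptedness hypotheses of Theorem~\ref{no.finite.invariant.collection} — in particular that the assignment sending each infinite cyclic component to its cycle is measurable and commutes with the translations $\theta_t$. This is where I would spend the effort, deducing it from the flow-adaptedness of $\f$ together with the translation-equivariant description of $K(C)$ just given; the pairwise disjointness and the finiteness of each $K(C)$ are then inherited from Lemma~\ref{cyclic.component}, and everything else is bookkeeping.
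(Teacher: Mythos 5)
Your proof is correct and follows essentially the same route as the paper: both deduce ``acyclic $\Rightarrow$ infinite'' directly from Lemma~\ref{cyclic.component}, and both handle the converse by collecting the infinite cyclic components $\{\Psi_i\}$ together with their unique cycles $\{\Xi_i\}$ and invoking Theorem~\ref{no.finite.invariant.collection}. Your extra attention to the flow-adapted, periodic-point characterization of $K(C)$ only makes explicit a verification the paper leaves implicit.
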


\begin{proof}
According to Lemma~\ref{cyclic.component} each connected component of
$G^\f(\phi)$ has at most one cycle. If the latter is finite,
it possesses exactly one cycle. This proves the first statement. 

Let $n=n(\Phi)$ be the number of connected components of $G^\f(\Phi)$
which are infinite and possess a cycle. Let $\Psi=\{\Psi_i\}_{i=1}^n$ 
denote the collection of such components.
Note that $n$ may be infinite. According to
Lemma~\ref{cyclic.component}, each $\Psi_i$ has exactly one
cycle. This cycle is a finite non-empty
flow-adapted subset of $\Psi_i$,
which contradicts Theorem~\ref{no.finite.invariant.collection}.
Therefore \asy, there is no such component.
 
\end{proof}

The next corollary follows from Lemma \ref{cyclic.component}. 
\begin{cor}\label{connected.so.tree} 
If $G^\f(\Phi)$ is almost surely connected, it is almost surely a tree. 
\end{cor}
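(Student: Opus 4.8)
The plan is to deduce the statement from the structural dichotomy of Lemma~\ref{cyclic.component} combined with Proposition~\ref{infinite.acyclic}. Almost sure connectedness of $G^\f(\Phi)$ means that, with probability one, the graph consists of a single connected component $C=C^\f(x)$, which is therefore equal to the whole support of $\Phi$. Thus it suffices to show that this unique component $C$ is almost surely a tree.

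The first step is to argue that $C$ is almost surely infinite. Since $(\Phi,\p)$ is a stationary point process with finite and positive intensity, its support contains infinitely many points almost surely; as $C$ is the entire support, $C$ is infinite. The second step is then to invoke Proposition~\ref{infinite.acyclic}, which asserts precisely that a connected component of $G^\f(\Phi)$ is acyclic if and only if it is infinite. Hence $C$ is almost surely acyclic, and a connected acyclic graph is, by definition, a tree.

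Equivalently, one may argue directly from Lemma~\ref{cyclic.component}: since $C$ is connected, it is either an infinite tree or carries exactly one directed cycle $K(C)$. In the latter case $K(C)$ would be a finite, non-empty, flow-adapted subset of $\Phi$ (it is uniquely determined by the component, so its selection is translation invariant and measurable), which contradicts Corollary~\ref{no.finite.invariant.subset}. This rules out the cyclic alternative and forces $C$ to be an infinite tree.

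The argument is essentially a repackaging of results already established upstream, so no genuinely hard step remains: the one point that requires care --- and which was already handled in the proof of Proposition~\ref{infinite.acyclic} via Theorem~\ref{no.finite.invariant.collection} --- is the exclusion of the unique cycle through the absence of finite non-empty flow-adapted subsets. The only additional ingredient specific to this corollary is the observation that connectedness collapses the component partition to a single, necessarily infinite, component, after which acyclicity and hence the tree property follow immediately.
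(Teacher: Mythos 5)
Your proof is correct and follows the same route the paper intends: the paper states the corollary as an immediate consequence of Lemma~\ref{cyclic.component} (with the cyclic alternative already excluded by Proposition~\ref{infinite.acyclic}, i.e.\ by Theorem~\ref{no.finite.invariant.collection}), which is exactly the argument you spell out. Your write-up simply makes explicit the two small steps the paper leaves implicit --- that connectedness forces the single component to be the whole (hence infinite) support, and that the unique cycle would otherwise be a finite non-empty flow-adapted subset.
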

\vspace{.2cm}

\begin{proof}[Proof of Theorem~\ref{cardinality.components}] 
The result for connected components of Class~${\mathcal F}/{\mathcal F}$
is an immediate consequence of Lemma~\ref{cyclic.component}. 

Assume $C$ is an infinite component.
According to Proposition~\ref{infinite.acyclic}, $C$ is acyclic. 
Consider the collection of all connected components with
both finite and infinite foils. Denote this
collection by $\Psi=\{\Psi_i\}_{i=1}^n$, where $n$ may be infinity.
Corollary~\ref{monotone.cardinal.foils} implies that each $\Psi_i$
should have a largest finite foil, say $\Xi_i$,
where the order is that based on seniority (Definition~\ref{foils.order}).
Therefore, $\Psi=\{\Psi_i\}_{i=1}^n$ and $\Xi=\{\Xi_i\}_{i=1}^n$
satisfy the assumptions of Theorem~\ref{no.finite.invariant.collection}
and this is hence a contradiction.
So, \asy, there is no connected component with both 
finite and infinite foils, which proves that each
acyclic component is either of Class~${\mathcal I}/{\mathcal F}$ or ${\mathcal I}/{\mathcal I}$.   

Let $C$ be a connected component of Class~${\mathcal I}/{\mathcal F}$.
Almost surely, $C$ cannot 
have a smallest foil. Otherwise the latter
would again be a finite flow-adapted subset of
the infinite connected component $C$, which contradicts
Theorem~\ref{no.finite.invariant.collection}.
This proves the second assertion on the foils of $C$ in this case.
Now let $L_0$ be an arbitrary foil of $C$ and,
for all integers $i$,
let $L_i$ be the foil $\f_\phi^i(L_0)$. 
Since $L_0$ is finite, there exists a least
non-negative integer $n$
such that $\f^n(L_0)$ is a single point.  Let
\eqn{C_0:=\{\f_\phi^m(L_0),\ -\infty<m<n\},}
The graph $G^\f(C_0)$ is infinite, connected 
and all its vertices are of finite degree.
It hence follows from K\"onig's infinity lemma \cite{Ko90}
that $G^\f(C_0)$ has an infinite path $\{x_i\}_{i\leq 0}$.
For $i>0$, define $x_i:=\f_\phi^i(x_0)$. 
Then $(x_i)_{i\in\Z}$ is a bi-infinite path.
Clearly $(x_i)_{i\in\Z}\subset \f_\phi^\infty(C)$.
Since all edges of $G^\f(C)$ are from a foil $L$ to the foil $L_+$,
$(x_i)_{i\in\Z}$ has exactly one vertex in each foil.
Finally for each point $y$ in an arbitrary foil $L(x_i)$,
there exists $m>0$ such that $\f_\phi^m(y)=\f_\phi^m(x_i)=x_{i+m}$
and hence $\f^\infty(C)\subset (x_i)_{i\in\Z}$,
which completes the proof of the properties of Class~${\mathcal I}/{\mathcal F}$.
 
Consider now $C$ of Class~${\mathcal I}/{\mathcal I}$ and assume that $\fph^\infty(C)$ is not empty.
If $x$ is a primeval element of $C$, then $G^\f(D(x))$ is an infinite connected
graph with vertices of finite degree and hence it possesses
an infinite path, which in turn gives a bi-infinite path using the
same construction as what was described above for the Class~${\mathcal I}/{\mathcal F}$.
Hence, in order to prove that
$\fph^\infty(C)$ is empty, it is sufficient to
show that $C$ has no bi-infinite path. 
If $C$ has finitely many bi-infinite paths, then the intersections
of these bi-infinite paths with each foil of $C$ give a collection
of finite subsets of infinite sets, which contradicts
Theorem~\ref{no.finite.invariant.collection}.
Consider now the case where $C$ has infinitely-many bi-infinite paths.
Since $C$ is connected, each two bi-infinite paths should intersect
at some point. Let $J=J(C)$ be the set of all points $x\in C$ 
such that at least two bi-infinite paths join at $x$.
It is now shown that, \asy,
the intersection of a bi-infinite path and $J$ has neither a first
nor a last point for the order induced by $\f$.
If it has a first (resp. last) point, then the part of the
path before the first (resp. after the last) point
is an infinite flow-adapted set with 
a finite flow-adapted subset, which contradicts
Theorem~\ref{no.finite.invariant.collection}.
Therefore, for each point $x\in J$, there is a smallest
positive integer $n(J,x)$ such that $\fph^{n(J,x)}\in J$.  
Now define a point-shift $h$ on the whole point process as follows
\eqn{
h(x)=
\begin{cases}
\fph^{n(J,x)} & x\in J(C)\\
x & \text{otherwise}.
\end{cases}
}
Since the intersection of any bi-infinite path with $J$
does not have a first point, $h$ is \asy{} surjective.
But from the very definition of $J$, all points of 
this set have at least two pre-images, which contradicts
Corollary~\ref{surjective.injective}. Hence the situation
with infinitely-many bi-infinite paths is not possible either, which
concludes the proof.
\end{proof}

In graph theoretic terms,
one can summarize the results discussed in the last
proof as follows:
\begin{cor}
\label{cor-1-2}
A Class~${\mathcal I}/{\mathcal I}$ component has one (positive) end.
A Class~${\mathcal I}/{\mathcal F}$ component has two ends (a positive and a negative one).    
\end{cor}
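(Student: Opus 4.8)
\section*{Proof proposal}

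The plan is to reread the three cases of Theorem~\ref{cardinality.components} through the language of \emph{ends}, where an end of $C$ is an equivalence class of one-way infinite simple paths (rays), two rays being equivalent when no finite set of vertices separates their tails. By Proposition~\ref{infinite.acyclic} an infinite component $C=C^\f(\Phi)$ is a tree, and by Corollary~\ref{monotone.cardinal.foils} all its vertices have finite degree, so K\"onig's infinity lemma \cite{Ko90} is available and each end of $C$ contains a ray issued from any fixed vertex, these rays being taken up to a common tail. The first observation is that, $C$ being infinite and acyclic, the forward orbit $x,\f_\phi(x),\f_\phi^2(x),\dots$ of any $x\in C$ is an infinite ray; and by the very definition of a connected component any two forward orbits eventually coincide, so they all determine a single end, which I call the \emph{positive} end. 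The whole question is then which rays run to infinity in the junior direction, and this is governed by the primeval set $\f^\infty(C)$.

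The second step recasts primevality as a statement about backward rays. A vertex $y$ lies in $\f^\infty(C)=\bigcap_n \f_\phi^n(C)$ exactly when $D_n(y)\neq\emptyset$ for every $n$, i.e.\ when its descendant tree $\{y\}\cup D(y)$ is infinite; since this tree has finite branching, K\"onig's lemma makes this equivalent to the existence of an infinite descending chain of sons issued from $y$, that is, an infinite backward ray. Hence a ray of $C$ can escape to infinity in the junior direction only through vertices of $\f^\infty(C)$; and through a vertex $y\notin\f^\infty(C)$, whose descendant tree $\{y\}\cup D(y)$ is then finite, any ray must leave that finite subtree across its unique connecting edge $y\mapsto\f_\phi(y)$, a forward step.

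For Class~${\mathcal I}/{\mathcal I}$, Theorem~\ref{cardinality.components} gives $\f^\infty(C)=\emptyset$, so every descendant tree is finite; applying the escape observation repeatedly to a ray $R$ issued from $v_0$ forces $\f_\phi^k(v_0)\in R$ for all $k\ge 0$, so $R$ contains the whole forward orbit of $v_0$ and is equivalent to the positive end, giving exactly one end. For Class~${\mathcal I}/{\mathcal F}$, Theorem~\ref{cardinality.components} identifies $\f^\infty(C)$ with the bi-infinite path $(x_n)_{n\in\Z}$, so the backward spine $(x_0,x_{-1},x_{-2},\dots)$ is a genuine ray; deleting the single vertex $x_0$ disconnects the tree and separates the forward tail $(x_1,x_2,\dots)$ from this backward tail, placing the backward spine in a second, negative, end. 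That there are no further ends follows because every off-spine vertex is non-primeval, hence has a finite descendant tree, so $C$ minus the spine is a disjoint union of finite bushes; any ray thus meets infinitely many spine vertices, and since in a tree the path joining two spine vertices is the intervening spine segment, the set of spine indices it visits is an infinite interval, $[N,\infty)$ or $(-\infty,N]$, putting the ray in the positive respectively negative end. So $C$ has exactly two ends.

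I expect the real obstacle to be the second half of the Class~${\mathcal I}/{\mathcal F}$ argument---showing that the finite bushes contribute no extra ends and that the positive and negative ends are genuinely distinct---where the convexity (``interval'') of the set of visited spine indices and the separation of the tree by a single spine vertex do the decisive work; by contrast the one-end claim for Class~${\mathcal I}/{\mathcal I}$ should drop out cleanly from the escape observation once primevality has been translated, via K\"onig's lemma, into the (non)existence of an infinite backward ray.
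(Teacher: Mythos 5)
Your proof is correct and follows essentially the same route as the paper, which offers no standalone argument for this corollary but presents it as a direct summary of the facts established in the proof of Theorem~\ref{cardinality.components} (the unique bi-infinite path $\f^\infty(C)$ for Class~${\mathcal I}/{\mathcal F}$, and $\f^\infty(C)=\emptyset$ together with finite descendant trees for Class~${\mathcal I}/{\mathcal I}$). You have simply carried out in detail the translation of those facts into the language of ends, using the same ingredients (finite degrees, K\"onig's lemma, and the spine/bush decomposition implicit in the theorem's proof).
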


Theorem~\ref{cardinality.components} also has the following corollary:

\begin{cor}
For all stationary point processes $(\Phi,\p)$, for all
point-shifts $\f$, there exist three stationary point processes
$(\Phi_{{\mathcal F}/{\mathcal F}},\p)$, $(\Phi_{{\mathcal I}/{\mathcal F}},\p)$ and $(\Phi_{{\mathcal I}/{\mathcal I}},\p)$ 
(which may be empty with positive probability),
all defined on the same probability space, and such that
$$\Phi= \Phi_{{\mathcal F}/{\mathcal F}}+\Phi_{{\mathcal I}/{\mathcal F}}+\Phi_{{\mathcal I}/{\mathcal I}}.$$
All connected components of 
$G^\f(\Phi_i)$ are of Class~$i$, $i\in\{{\mathcal F}/{\mathcal F},{\mathcal I}/{\mathcal F},{\mathcal I}/{\mathcal I}\}$.
If $(\Phi,\p)$ is ergodic, then each of these point
processes is also ergodic.    
\end{cor}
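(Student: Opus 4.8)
The plan is to color every point of $\Phi$ by the class of the connected component of $G^\f(\Phi)$ containing it, and to take $\Phi_i$ to be the thinning of $\Phi$ keeping only the points of color $i$. By Theorem~\ref{cardinality.components}, $\p$-almost surely every connected component of $G^\f(\Phi)$ belongs to exactly one of the three classes, so on a set of full measure one can define, for each $x\in\Phi$, a mark $c(x)\in\{{\mathcal F}/{\mathcal F},{\mathcal I}/{\mathcal F},{\mathcal I}/{\mathcal I}\}$ equal to the class of $C^\f(x)$. This mark is measurable in $(\phi,x)$, since the finiteness of $C^\f(x)$ and the finiteness of the foil $\l\f(x)$ are functionals of $G^\f(\phi)$ expressible through countable operations on the support. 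Setting $\Phi_i:=\Phi\cap\{x:c(x)=i\}$ then yields three subprocesses on the common space $(\Om,\sf,\p)$ with $\Phi=\Phi_{{\mathcal F}/{\mathcal F}}+\Phi_{{\mathcal I}/{\mathcal F}}+\Phi_{{\mathcal I}/{\mathcal I}}$.

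Next I would establish stationarity by checking that the coloring is flow-adapted. The compatibility relation \eqref{compf} gives $G^\f(\theta_t\phi)=\theta_t G^\f(\phi)$, so the component of $\theta_t x$ is the translate of the component of $x$ and shares its cardinality and foil structure; hence $c_{\theta_t\phi}(\theta_t x)=c_\phi(x)$, and the full-measure set on which $c$ is defined is $\theta_t$-invariant. Consequently $\Phi_i=\pi_i(\Phi)$ for a translation-covariant selection $\pi_i:\n\to\n$ with $\pi_i(\theta_t\phi)=\theta_t\pi_i(\phi)$. Stationarity of $\Phi$ then forces $\theta_t\Phi_i=\pi_i(\theta_t\Phi)$ to have the same distribution as $\pi_i(\Phi)=\Phi_i$, so each $\Phi_i$ is stationary.

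For the structural claim I would use that every edge $(x,\f_\phi(x))$ of $G^\f(\Phi)$ joins two points of the same component, hence of the same color. Thus $\f_\Phi$ maps the support of each $\Phi_i$ into itself, and the graph $G^\f(\Phi_i)$---the subgraph of $G^\f(\Phi)$ induced on $\mathrm{support}(\Phi_i)$---is exactly the disjoint union of the Class-$i$ components of $G^\f(\Phi)$, all of its edges being retained. In particular every connected component of $G^\f(\Phi_i)$ is of Class $i$. The clause that a $\Phi_i$ may be empty with positive probability requires no argument, a thinning being allowed to vanish.

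Finally, for the ergodicity statement, assume $(\Phi,\p)$ ergodic and let $B\in\N$ be translation invariant, i.e. $\theta_t B=B$ for all $t$. Using $\pi_i(\theta_{-t}\psi)=\theta_{-t}\pi_i(\psi)$ one sees that $\theta_t\pi_i^{-1}(B)=\{\psi:\pi_i(\theta_{-t}\psi)\in B\}=\{\psi:\pi_i(\psi)\in\theta_t B\}=\pi_i^{-1}(B)$, so $\pi_i^{-1}(B)$ is again translation invariant; ergodicity of $\Phi$ then gives $\p(\Phi_i\in B)=\p(\Phi\in\pi_i^{-1}(B))\in\{0,1\}$, whence $(\Phi_i,\p)$ is ergodic. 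I expect the only genuinely non-routine points to be the measurability and flow-adaptedness of the coloring and this last pull-back of invariant events; the remainder is a direct repackaging of Theorem~\ref{cardinality.components}.
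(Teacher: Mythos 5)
Your proposal is correct and follows essentially the same route as the paper: define $\Phi_i$ as the set of points lying in Class-$i$ components, observe that membership in a given class is a flow-adapted (translation-covariant) functional of the realization, and let stationarity and ergodicity be inherited from $(\Phi,\p)$; the paper states this in three sentences and you merely supply the routine measurability, covariance, and pull-back-of-invariant-events details.
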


\begin{proof}
The statement is an immediate consequence of
Theorem~\ref{cardinality.components} and the fact
that being a connected component of Class~$i$ is a flow-adapted
property. Thus if $\Phi_i$ is defined as the set of all points
in components of Class~$i$, $\Phi_i$ is flow-adapted.
Stationarity and ergodicity depend only on $\p$ and
the flow on the probability space and they are being
carried to new point processes. 
\end{proof}

\subsection{Comments and Examples}
\label{sec:exab}
Here are a few observations on the cardinality classification.

A point process $(\Phi,\p)$ can have a mix of components of all three
classes.

If $(\Phi,\p)$ only has ${\mathcal F}/{\mathcal F}$ components,
then it should have an infinite
number of connected components. An example of this situation
is provided by the Mutual Nearest Neighbor \PoM{} $N$ 
on Poisson point process on $\R^2$ (see below).

If $(\Phi,\p)$ only has ${\mathcal I}/{\mathcal F}$ components,
then the cardinality of $\C [\Phi]{\f_{\Phi}}$ may be finite
or infinite.
An example of the first situation is provided by
the Royal Line of Succession
\PoM{} on Poisson point processes on $\R^2$
(see below).
An example of the latter is provided by the Strip
\PoM{} $S$ on the Bernoulli grid of dimension 2.

If $(\Phi,\p)$ only has ${\mathcal I}/{\mathcal I}$ components,
then the cardinality of $\C [\Phi]{\f_{\Phi}}$ may again be finite
or infinite.
An example of the first situation is provided by
the Strip
\PoM{} $S$ on Poisson point processes on $\R^2$.
An example with infinite cardinality
is provided in Subsection \ref{ssMultiStrip} of the appendix.

The end of this section gathers 
examples of the three classes.

\subsubsection{Class~${\mathcal F}/{\mathcal F}$}
For the Mutual Nearest Neighbor \PoM{} $N$
on a Poisson point process,
there is no evaporation and there is
an infinite number of connected
components, all of class~${\mathcal F}/{\mathcal F}$.
The order of the foils is that of $\Z$ mod 2 or $\Z$ mod 1.
The foil partition is
the singleton partition, hence
not markable.
The connected component partition is not markable either.

\subsubsection{Class~${\mathcal I}/{\mathcal F}$}
An example of this class is provided by the strip \pom{}
on 2-dimensional Bernoulli grids.
It is easy to see that the connected  components of $\f_s$ 
are the {\em horizontal
sub-processes} which look like 1-dimensional Bernoulli grid. 
The \pom{} $\f_s$
is a bijection which leaves each of the connected components
invariant. It follows that there is a countable
collection of connected components. Each of them
is of Class~${\mathcal I}/{\mathcal F}$, and hence not markable. The foil
of a point is the singleton containing this point.
The set of descendants of a point $x$ consists of those points 
on the same horizontal line which are located on left side of $x$. 
Each connected component has two ends. Its foliation has
the same order as $\Z$ but is not markable.

Another example of this class is
leverages the Strip \pom{} on a stationary Poisson point process 
$(\Phi,\p)$ of $\R^2$ and $\R^3$. This  \pom{}
has a single connected component \cite{FeLaTh04}. The RLS ordering
(see the proof of Proposition~\ref{existence.f.orthogonal})
hence defines a total order on $(\Phi,\p)$, which
is equivalent to that of $\Z$. This allows one to
define the RLS \PoM\ $\f_{\mbox{\small{rls}}}$ which associates to
$x\in \Phi$ the unique point $y\in \Phi$ such that $x$
comes next to $y$ in this total order. This \pom{} is clearly bijective.
Hence the foil of $x$ is $\{x\}$. The unique connected component
of this \pom{} is thus
of class~${\mathcal I}/{\mathcal F}$.
The unique connected 
component has two ends. Its foliation has the same order as $\Z$.
It is not markable.

Note that these two examples are bijective \pom s. But there are cases 
of type $\cal I/I$ which are not bijective.  

\subsubsection{Class~${\mathcal I}/{\mathcal I}$}

Here are three examples illustrating that this class of \pom{}s
can have either markable or non markable foliations.

\begin{prop}
The Next Row \pom{} on the 2-dimensional Bernoulli grid 
with $0<p<1$, has a single connected component of type 
$\cal I/I$. Foils of this connected component are not markable.
\end{prop}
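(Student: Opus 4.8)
The plan is to reduce all three assertions to a single \emph{coalescence} statement and then read them off using results already established. I begin with two structural observations. Because $R$ increases the first coordinate by exactly $1$, the directed graph $G^R(\Phi)$ has no cycle, so by Proposition~\ref{infinite.acyclic} every connected component is infinite (no component is of Class~$\mathcal F/\mathcal F$); moreover, if $R^n(x)=R^n(y)$ then $x$ and $y$ share the same first coordinate, so each foil is contained in a single column $\{(c,y)\in\Phi\}$. For $0<p<1$ each such column is a.s.\ infinite (independent retentions, Borel--Cantelli in both directions).

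The crux is the reverse inclusion: any two points $x=(c,a)$, $y=(c,b)$ of a common column satisfy $x\sim_R y$. Let $a_n\le b_n$ be the second coordinates of $R^n(x),R^n(y)$; the two trajectories are driven by the successive, independent columns, stay ordered, agree forever once they coincide, and merge at step $n{+}1$ exactly when the fresh column $c{+}n{+}1$ has no retained site in $[a_n,b_n)$. I would study the gap $g_n:=b_n-a_n$, a Markov chain on $\{0,1,2,\dots\}$ absorbed at $0$. Using that the first retained site above any level is geometric, a short computation gives the exact identity $\e[g_{n+1}\mid g_n]=g_n$, so $(g_n)$ is a non-negative martingale; it thus converges a.s.\ to a finite limit, but from each state $g\ge1$ it jumps to $0$ with probability $(1-p)^g>0$ and so cannot settle at a positive value, whence $g_n\to0$ a.s.\ and the trajectories coalesce. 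Taking a countable union over all pairs of grid points then yields, a.s., that every foil is exactly its column. This martingale identity together with the absorption argument is the step I expect to require the most care (it is the discrete counterpart of the criticality recorded in Proposition~\ref{mn1}).

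Given coalescence the first two claims are immediate. For any $u,v$, push the one in the lower column forward by $R$ to the column of the other and coalesce; this produces $m,n$ with $R^m(u)=R^n(v)$, which is precisely the defining relation of $C^R$, so $G^R(\Phi)$ has a single connected component, the whole grid. That component is infinite and each of its foils is a column, hence infinite, so by Theorem~\ref{cardinality.components} it is of Class~$\mathcal I/\mathcal I$ (equivalently $R^\infty=\emptyset$, Corollary~\ref{coreva}).

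For non-markability I would pass to the horizontal projection. Since every column is non-empty, the first coordinates of $\Phi$ form the stationary lattice $\Lambda=\Z+U_1$ on $\R$, a stationary point process (intensity $1$) under horizontal translations, and the foil partition of $\Phi$ is, column by column, the singleton partition of $\Lambda$. A translation-invariant enumeration of the foils depends only on the horizontal positions and so descends to a flow-adapted enumeration of the singletons of $\Lambda$ under horizontal translations; equivalently it selects a single point of $\Lambda$ flow-adaptedly, contradicting Corollary~\ref{no.finite.invariant.subset}. Hence the foil partition is not markable. (Alternatively, via the markable-decomposition proposition, markability would furnish a non-empty sub-stationary point process supported on a single vertical line; as a unit vertical interval of a column contains at most one grid point, such a process has intensity $0$ and is therefore a.s.\ empty, a contradiction.)
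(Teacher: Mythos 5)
Your proof is correct, and it is substantially more complete than the paper's own, which simply asserts that each foil is a full vertical column and that there is a single connected component (referring back to the example of Section~\ref{sec:psfo}, where this is stated with ``one can show''). The genuine content you add is the coalescence argument: the gap $g_n$ between two trajectories in a common column satisfies $\e[g_{n+1}\mid g_n]=g_n$ (since the overshoot past any starting level is geometric with the same mean for both walkers), so $g_n$ is a nonnegative integer-valued martingale, hence a.s.\ eventually constant, and it cannot freeze at a positive value because each fresh column independently closes the gap with probability $(1-p)^{g_n}>0$. This is exactly the missing lemma behind the paper's assertion $L^R(x)=\{(x_1,y)\in\Phi\}$, and your check that the computation is exact (not just subcritical) is the right thing to worry about. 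For non-markability the two routes diverge: the paper intersects the horizontal rows with a hypothetically selected foil to produce a flow-adapted collection of finite non-empty subsets of infinite sets, contradicting Theorem~\ref{no.finite.invariant.collection}; you instead project to the first coordinate and contradict Corollary~\ref{no.finite.invariant.subset} for the one-dimensional stationarized lattice. Your route is fine, but one phrase is off: a translation-invariant enumeration of the foils need \emph{not} ``depend only on the horizontal positions'' --- it may use the whole configuration. This costs you nothing, since selecting the foil numbered $1$ still yields a measurable, flow-adapted (with respect to the horizontal subflow, under which $\p$ is stationary and the full configuration is compatible) choice of a single point of the projected lattice, which is all that Corollary~\ref{no.finite.invariant.subset} requires; just drop the unnecessary claim. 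Your parenthetical alternative (a sub-stationary process supported on one vertical line has $\e[\Phi_i([0,M]\times[0,1])]\le 1$ for all $M$, hence intensity $0$, hence is a.s.\ empty) is also valid and is the closest in spirit to the paper's markable-decomposition criterion.
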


\begin{proof}
There is a single connected component and each foil consists
of all points of the point process on a vertical line
(Figure~\ref{next.row.foils}). 
Therefore all foils are infinite and the connected component is of type 
$\cal I/I$. If foils of this connected component were markable, 
the partition of the point process into horizontal lines would form
a collection of infinite disjoint subsets of the Bernoulli grid.
The intersection of these subsets 
with a some fixed foil is a finite non-empty inclusion, which contradicts
Theorem~\ref{no.finite.invariant.collection}.
\end{proof}

\begin{prop}
The Condenser \pom{} on the Poisson point process in $\R$ has 
a single $\cal I/I$ connected component and the foliation is markable. 
\end{prop}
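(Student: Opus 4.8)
\section*{Proof proposal}

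The plan is to reduce the entire statement to a single coalescence fact, after extracting everything that comes for free from the mark structure. Write $m_c(x)=\#(\phi\cap B(x,1))$ for the condenser mark and $A_k=\{x\in\phi:\ m_c(x)=k\}$ for the $k$-th level set; each $A_k$ is a translation-invariant sub-point-process which is $\fp$-a.s.\ nonempty, since under the Palm measure $m_c-1$ is Poisson and every value $k\ge 1$ has positive probability. The first observation is that $\f$ is a.s.\ well defined on the Poisson process (as recorded in Subsection~\ref{ssCEPM}) and, by its very definition, strictly increases the mark by one: $m_c(\f_\phi(x))=m_c(x)+1$. Consequently $\f^n(\phi)\subseteq\{m_c\ge n+1\}$ for every $n$, so $\f^\infty(\phi)=\bigcap_n\f^n(\phi)\subseteq\{m_c=\infty\}=\emptyset$. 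Thus $\f$ evaporates every connected component, and by Corollary~\ref{coreva} every component is of Class~${\mathcal I}/{\mathcal I}$; in particular each component is infinite and acyclic with infinite foils, which already settles the ``${\mathcal I}/{\mathcal I}$'' part of the claim.

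Next I would record that the mark is a foil invariant. If $\f^n(x)=\f^n(y)$ then $m_c(x)=m_c(\f^n(x))-n=m_c(\f^n(y))-n=m_c(y)$, so each foil is contained in a single level set $A_k$, and $m_c$ descends to a well-defined value $\mathrm{mark}(L)$ on each foil $L$, with $\mathrm{mark}(\f(L))=\mathrm{mark}(L)+1$. Since every component is acyclic, Definition~\ref{foils.order} endows its foils with a linear seniority order in which the successor of $L$ is $\f(L)$; along this order the mark therefore increases by exactly one at each step. Hence, on any fixed component, the assignment $L\mapsto\mathrm{mark}(L)$ is strictly monotone and so \emph{injective}: each mark value is realized by at most one foil per component, and the foils of a component are indexed by an up-interval $\{k_0,k_0+1,\dots\}$ of marks.

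The whole proposition now reduces to showing there is a \emph{single} component, equivalently (foils being contained in level sets) that any two points of the same mark coalesce under iteration of $\f$. Indeed, given arbitrary $x,y$, applying $\f$ to the one of smaller mark until the marks agree produces equal-mark iterates that lie in the same component iff they coalesce; so single-component is exactly the coalescence statement. Granting it, all foils lie in one component, $m_c$ takes every value $k\ge 1$ there, and $\nu(L):=\mathrm{mark}(L)$ is a translation-invariant \emph{injective} enumeration of all foils, which is precisely markability in the sense of Definition~\ref{de:marka}. Thus single-component simultaneously yields both remaining assertions.

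The coalescence lemma is the main obstacle. For $x<y$ of common mark $k$, set $u_n=\f^n(x)\le v_n=\f^n(y)$, both in $A_{k+n}$; the clean deterministic reduction is that the orbits meet at step $n+1$ \emph{iff} the interval $(u_n,v_n]$ contains no point of $A_{k+n+1}$, since in that case the first $A_{k+n+1}$-point to the right of $u_n$ coincides with the first to the right of $v_n$. So it suffices to show that, $\fp$-a.s., this empty-interval event occurs for some $n$. I would establish this by a renewal/coupling estimate exploiting two features of the Poisson process on $\R$: the super-exponential rarity of high marks, so that the typical gap between consecutive points of $A_{m}$ diverges as $m\to\infty$, while the separation $v_n-u_n$ of the two orbits stays controlled (each step forces $u_{n+1}\in(u_n,v_n]$ as long as coalescence has not occurred). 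The delicate technical point---and the step I expect to be hardest---is to make this quantitative, dominating $v_n-u_n$ against the growing inter-point gaps of $A_{k+n+1}$ so that, infinitely often, a gap engulfs the pair and coalescence is forced; stationarity and ergodicity of the environment are what turn this into an almost-sure conclusion.
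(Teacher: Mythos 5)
Your structural reductions match the paper's (very terse) proof and are in fact more explicit than what the paper writes: the identity $m_c(\f_\phi(x))=m_c(x)+1$ gives $\f^\infty(\phi)=\emptyset$, hence Class ${\mathcal I}/{\mathcal I}$ for every component via Corollary \ref{coreva}; the mark is constant on foils and increases by one from a foil to its senior, so once there is a single component the map $L\mapsto \mathrm{mark}(L)$ is an injective, translation-invariant enumeration and markability follows from Definition \ref{de:marka}. Your reduction of the single-component claim to coalescence of equal-mark points, with the criterion that $u_n,v_n\in A_{k+n}$ merge at the next step iff $(u_n,v_n]$ contains no point of $A_{k+n+1}$, is also correct in dimension one.

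The gap is that the coalescence lemma --- the only substantive probabilistic step --- is not proved but only planned, and the plan rests on a claim that is false as stated: the separation $v_n-u_n$ does \emph{not} stay controlled. If coalescence fails at step $n+1$, then indeed $u_{n+1}\in(u_n,v_n]$, but $v_{n+1}$ is the first point of $A_{k+n+1}$ strictly to the right of $v_n$, so $v_{n+1}-u_{n+1}\ge v_{n+1}-v_n$, a fresh gap of $A_{k+n+1}$; since $\lambda_m\to 0$ these gaps diverge, and so does the separation. What makes the argument work is not boundedness of the separation but the ratio of scales: $(u_n,v_n]$ has length of order $1/\lambda_{k+n}$, the points it must avoid have intensity $\lambda_{k+n+1}$, and $\lambda_{m+1}/\lambda_m=2\lambda/m\to 0$ for the Poisson marks, so the conditional probability of a further non-coalescing step tends to $0$ summably fast; one must then still handle the dependence between marks and positions (the level sets $A_m$ are neither Poisson nor independent of one another) to extract the almost-sure conclusion. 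To be fair, the paper's own proof asserts the single-component property from $\lambda_n\to 0$ with no more detail (``one can conclude''), so you have reproduced the paper's argument at essentially its own level of rigor; but the step you yourself flag as hardest is the entire content of the proposition, and the heuristic you offer in its support needs to be corrected before the argument can be completed.
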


\begin{proof}
It is easy to see that if $\lambda_n$ is the intensity 
of points with the mark $m_c$ equal to $n$, where 
$m_c$ is defined in Example~\ref{ssCEPM}, then $\lambda_n$ tends 
to zero as $n$ tends to $\infty$. Using this, one can conclude 
that the condenser \pom{} has a single connected component. 
Each foil consists of all points with the same mark $m_c$.
The foliation has the order of $\mathbb N$.
\end{proof}

Here are further examples of this class.
The authors in \cite{FeLaTh04} prove that  the Strip \pom{} $S$
on the Poisson point process in $\R^2$, 
has a single connected component which is one ended.
Therefore this connected component evaporates under the action of $\fst$ 
and is of Class~${\mathcal I}/{\mathcal I}$.
On a Poisson point process,
the expander point-shift also has a single connected component
which evaporate under the action of the \pom{}.
This component is of Class~${\mathcal I}/{\mathcal I}$.

\section{Partition Preserving Point-Maps}
\label{sec:pppm}

It is well-known that all bijective \pom s preserve
the Palm probability of stationary point processes
and that this property characterizes the Palm probabilities
of stationary point processes \cite{Me75}.

This section features
a fixed \pom{} $\f$ and considers the class 
of bijective \pom s which preserve the two partitions
$\C{\f} = \C [\Phi]{\f_{\Phi}}$ and
$\L \f = \L [\Phi]{\f_{\Phi}}$
of a stationary point process $\Phi$. 

\subsection {Bijective \PoM s Preserving Components}
Let $\Gamma_{\C{}}^\f:=\Gamma_{\C{\f}}(\Phi)$ be the 
$\C \f$-stable group as defined in Definition~\ref{stable.group}.
As mentioned in Definition~\ref{stable.group}, $\Gamma_{\C{}}^\f$
is a subgroup
of the symmetric group on the support
of $\Phi$.

\begin{prop}
\label{existence.f.orthogonalC}
For each \pom{} $\f$ and each stationary point process $(\Phi,\p)$,
there exists a $\C\f$-dense element (see Definition~\ref{stable.group}) of the
$\C\f$-stable group; i.e., there exists $H\in\Gamma_{\C{}}^\f$ such that 
for all $x\in \Phi$,
\eqn{\{\h^i(x);i\in\Z\}=\c[\Phi]\f(x).}
There is no uniqueness in general.
\end{prop}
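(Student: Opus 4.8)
The plan is to build $H$ \emph{component by component}. By Theorem~\ref{cardinality.components}, $\p$-almost surely every connected component $C$ of $G^\f(\Phi)$ belongs to one of the classes Class~${\mathcal F}/{\mathcal F}$, Class~${\mathcal I}/{\mathcal F}$ or Class~${\mathcal I}/{\mathcal I}$, and belonging to a given class is a flow-adapted property. Hence it suffices to produce, on each $C$ and in a flow-adapted manner, a bijection of $C$ onto itself whose single orbit is all of $C$; gluing these across components and classes then yields a flow-adapted, bijective, $\C\f$-preserving point-shift which is $\C\f$-dense, i.e. an element of $\Gamma_{\C{}}^\f$. Since $C$ is countable, such a bijection is exactly a successor map turning $C$ into one finite cycle (if $C$ is finite) or one bi-infinite chain, so the problem reduces to a flow-adapted linear order of type $\Z/m\Z$ or $\Z$ on each $C$.

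For Class~${\mathcal F}/{\mathcal F}$ this is immediate from Lemma~\ref{counting.given.point}: the unique cycle $K(C)$ furnished by Lemma~\ref{cyclic.component} is a finite, non-empty, flow-adapted inclusion, so the lemma gives a flow-adapted numbering $y_1,\dots,y_m$ of the points of $C$, and I take $H$ to be the cyclic shift $y_j\mapsto y_{j+1}$ with indices modulo $m$. For Class~${\mathcal I}/{\mathcal F}$ I use the unique bi-infinite primeval path $\f^\infty(C)=(x_n)_{n\in\Z}$, which by the proof of Theorem~\ref{cardinality.components} meets each finite foil $L_n$ in exactly one point $x_n$, with $\f(x_n)=x_{n+1}$. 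Membership in $\f^\infty(C)$ is flow-adapted (a vertex lies there iff it has an ancestor of every order), so $x_n$ is a flow-adapted anchor of $L_n$; numbering $L_n$ by distance to $x_n$ (lexicographic ties) and concatenating the foils in the seniority order of Definition~\ref{foils.order} produces a ``snake'' of order type $\Z$, whose successor (next point of the current foil, or the anchor $x_{n+1}$ of the senior foil once a foil is exhausted) is the desired flow-adapted bijection with single orbit $C$.

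The delicate case is Class~${\mathcal I}/{\mathcal I}$, where foils are infinite and no canonical finite anchor exists. The key structural input is that such a $C$ carries \emph{no} bi-infinite path, since the ``present'' vertex of one would lie in $\bigcap_m\f^m(C)=\f^\infty(C)=\emptyset$. As all degrees are finite (Corollary~\ref{monotone.cardinal.foils}), König's lemma then forces $D(x)$ to be \emph{finite} for every $x\in C$, so that $C=\bigcup_{k\ge 0}\big(D(\f^k(x))\cup\{\f^k(x)\}\big)$ is an increasing union of finite subtrees. I would order the finitely many children (pre-images under $\f$) of each vertex geometrically, obtaining a flow-adapted ordered tree, and let $H$ be the associated \emph{Royal Line of Succession}, i.e. the pre-order successor: the first child if there is one, otherwise the root of the next sibling-subtree located by climbing along $\f$. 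Because each finite subtree is traversed in pre-order as one contiguous chain and smaller subtrees sit as order-intervals inside larger ones, this successor stabilizes along the exhaustion, agrees on overlaps, and connects any two vertices of $C$ in finitely many steps, giving a single orbit. The pre-order has no minimal element, as the root $\f^k(x)$ of the $k$-th subtree escapes to the end as $k\to\infty$, so $H^{-1}$ is everywhere defined.

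The one nonstructural point, which I expect to be the main obstacle, is ruling out a \emph{maximal} element of this pre-order: if the climbing step never found a later sibling, the succession would terminate and $C$ would acquire a last point. But a last point is unique in a linear order, so the collection of such last points would be a finite (one point per component) flow-adapted subset of the infinite Class~${\mathcal I}/{\mathcal I}$ components, contradicting Theorem~\ref{no.finite.invariant.collection}. Hence $\p$-almost surely no such maximum exists, $H$ is everywhere defined, and the pre-order has type $\Z$ with single orbit $C$. Assembling the three constructions yields the required $H\in\Gamma_{\C{}}^\f$; uniqueness fails in general, already on a Class~${\mathcal F}/{\mathcal F}$ cycle of length at least $3$ and through the arbitrary geometric tie-breaking used throughout, which produce many distinct dense bijections.
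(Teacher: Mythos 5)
Your proof follows essentially the same route as the paper's: decompose by the three classes of Theorem~\ref{cardinality.components}, use a flow-adapted cyclic numbering for Class~${\mathcal F}/{\mathcal F}$, concatenate the finite foils along the bi-infinite path for Class~${\mathcal I}/{\mathcal F}$, and take the RLS/DFS pre-order successor on the finite descendant trees for Class~${\mathcal I}/{\mathcal I}$, ruling out extremal elements via Theorem~\ref{no.finite.invariant.collection}. The only differences are cosmetic (invoking Lemma~\ref{counting.given.point} through the cycle rather than ordering the finite component lexicographically, and deriving finiteness of $D(x)$ from K\"onig's lemma rather than from evaporation), so the argument is correct and matches the paper.
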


\begin{proof}
The construction of $H$ is different for each of the three classes
of components identified in Theorem \ref{cardinality.components}.
In each case, the first step is the construction of a total order on the
points of $C$ which is flow-adapted and the second one
is the definition of a dense and bijective \pom{}
preserving $C$.

If $C=\c[\Phi]\f(x)$ is of ${\mathcal F}/{\mathcal F}$ class, then
it is easy to create a total order which is translation
invariant on the points of $C$ as
it is a finite set (e.g. using lexicographic order) with
points that can be numbered $0,1,\ldots,n-1$ for some integer
$n=n(x)\ge 1$. A flow-adapted bijection $H$
preserving $C$ is then easy to build by taking $H=M_n$
with $M_n(k)=k+1$ mod $n$.

If $C=\c[\Phi]\f(x)$ is of ${\mathcal I}/{\mathcal F}$ class, then,
the existence of a single bi-infinite path in $G^F(C)$ (see 
Theorem \ref{cardinality.components}) and the
finiteness of the foils can be used to construct a total order.
Let $\{x_n\}_{n\in \Z}$ be the bi-infinite path in question
and let
$L_n$ denote the foil of $x_n$.  
Since $L_n$ is finite for all $n$, one can use the lexicographic
order to create a total order between its points. The total order is
then obtained by saying that all points of $L_n$ have precedence over
those of $L_{n-1}$. This total order, which is that of $\Z$,
is flow-adapted.
The bijective \pom{} is that associating to a point $x$ of $C$ 
its direct successor for this order. This \pom{} will be referred
to as the Bi-Infinite Path \PoM{} $B$, with associated \pomk{} $b$.
On such a component, one takes $H=B$.

If $C=\c[\Phi]\f(x)$ is of ${\mathcal I}/{\mathcal I}$ class, then
the construction uses a total order on the nodes
of $G^\f(C)$ known as RLS (Royal Line of Succession).
The latter order is based on two ingredients:
\begin{enumerate}
\item A local (total) order
among the sons of a given node in $G^\f(C)$.
This can be done as follows: for a given point $x$ of $C$
let $B^\f(x)=B^\f_\phi(x)$ be the set of its brothers; i.e.,  
\eqn{B^\f(x):=\{y\in\phi;\f_\phi(x)=\f_\phi(y)\}. }
The elements of $B^\f(x)$ can then be ordered in a flow-adapted
manner using the lexicographic order of the Euclidean space.
\item The Depth First Search (DFS - see Appendix \ref{AB})
pre-order on rooted trees.
\end{enumerate}
The RLS order on a rooted tree is a total order
on a finite tree obtained by combining (1) and (2):
DFS is used throughout and the sons of any given node are visited
in the order prescribed by (1), with priority given to the
older son.

It is now explained how this also creates a total order on the nodes
of $C$. For $x,y\in C$, there exist positive integers $m$ and $n$ such
that $\fph^m(x)=\fph^n(y)$. One says that $x\geq_r y$ if $x$ has
RLS priority over $y$ in the rooted tree of descendants
of $\f_\phi^m(x)$. This tree is a.s. finite because  
in the ${\mathcal I}/{\mathcal I}$ case, there is evaporation
of $C$ by the \pom{}, which this in turn implies that
for all points $z\in \phi$, the total number of descendents
of $z$ is a.s. finite.
The DFS preordering on descendants of a node in a tree forms an
interval of this preordering. This implies that $\geq_r$ 
is a well-defined order on $C$ and also that it orders elements of
$C$ in the same linear order as that of an interval in $\Z$.
Furthermore, since $C$ is infinite,
this order on $C$ 
cannot have a greatest element or a least element.
Otherwise the greatest and the least elements would be a finite
flow-adapted subset of $C$ or the foil,
which contradicts Theorem~\ref{no.finite.invariant.collection}. 
Therefore the order on $C$ as well as its restriction to a foil
is a linear order, similar to that of $\Z$.

On such a components, one defined 
$H=R$ where $R$ denotes the RLS \pom{},
namely the \pom{} that associates to each point its 
successor in the RLS order, which is bijective and translation 
invariant.
\end{proof}
 
Let $h$ denote the \pomk{} of the \pom{} $H$ defined
in the last theorem.
Notice that since $H$ is bijective, the dynamical system $(\n^0,\theta_{h})$
preserves $\fp$.

\subsection{Bijective \PoM s Preserving Foils}
The results of this subsection parallel those of the
last subsection, with an important refinement which is
that of \emph{order preservation}.

Let $\Gamma_L^\f:=\Gamma_L^\f(\Phi)$ denote the set of all bijective
and $\L[\Phi]\f$-preserving \pom s.
This group, which is called the \emph{$\L \f$-stable group},
is a subgroup of the $\C \f$-stable group. 

As above, 
an element $\h$ of the $\L \f$-stable group
is said to be $\L[\Phi]\f$-dense if
\eqn{\{\h_\phi^i(x);i\in\Z\}=\l[\phi]\f(x).}

Each $\L[\Phi]\f$-dense element $\h$ of the $\L \f$-stable group
induces a total order $\preceq_\h$  on the elements of each infinite
foil of $C$ by 
\begin{equation}\label{eq:preceq}
x\preceq_\h \h(x).
\end{equation}
This total order is flow-adapted. It is said to be
preserved by $\f$ 
if \eqn{x\preceq_{\h}y\Rightarrow \f(x)\preceq_{\h}\f(y).}

The following proposition uses the fact
(proved in Theorem \ref{cardinality.components}) that in
a connected component $C$ of $G^\f(\Phi)$, either all foils of $C$
have finite cardinality or all foils have infinite cardinality.

\begin{prop}
\label{existence.f.orthogonal}
For each stationary point process $(\Phi,\p)$,
and each \pom{} $\f$, there exists a $\L[\Phi]\f$-dense element $\fo$ of the
$\L \f$-stable group. In addition $\fo$ can be chosen 
such that the $\preceq_{\fo}$ order is
preserved by $\f$
on components with all its foils with infinite cardinality.
There is no uniqueness in general.
\end{prop}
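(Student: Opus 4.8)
The plan is to handle the two cardinality regimes of foils separately, following the trichotomy of Theorem~\ref{cardinality.components}, and in each case to build the dense, foil-preserving bijection $\fo$ explicitly by constructing a flow-adapted total order on each foil. For components whose foils are finite (Classes~${\mathcal F}/{\mathcal F}$ and ${\mathcal I}/{\mathcal F}$), I would first order the points within a single foil $L$ using the lexicographic order of $\R^d$; since $L$ is finite this is unambiguous and flow-adapted. The dense bijection on a finite foil of cardinality $m$ is then the cyclic shift $M_m$ sending the $k$-th point to the $(k+1 \bmod m)$-th, exactly as in the ${\mathcal F}/{\mathcal F}$ construction of Proposition~\ref{existence.f.orthogonalC}. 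Defining $\fo$ foil-by-foil this way yields a bijective, $\L[\Phi]\f$-preserving \pom{} whose orbits span each foil, so it is $\L[\Phi]\f$-dense. The order-preservation clause is vacuous here because those foils are finite.

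The substantive case is a component $C$ of Class~${\mathcal I}/{\mathcal I}$, where every foil is infinite and, by Theorem~\ref{cardinality.components}, $C$ is an acyclic tree that evaporates ($\f^\infty(C)=\emptyset$). Here I would reuse the RLS (Royal Line of Succession) machinery already set up in the proof of Proposition~\ref{existence.f.orthogonalC}: since $C$ evaporates, every point has only finitely many descendants, so the tree of descendants of any node is finite and the RLS pre-order is well-defined, flow-adapted, and linearly orders $C$ like an interval of $\Z$ (with no greatest or least element, by Theorem~\ref{no.finite.invariant.collection}). The key new point is that the RLS order, when restricted to a single foil $L_n=\f^n(L_0)$, also gives a linear order of $\Z$-type on that infinite foil; I would then define $\fo$ to send each point of a foil to its RLS-successor \emph{within the same foil}. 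This $\fo$ is bijective, preserves $\L[\Phi]\f$, and is dense on each foil by construction, inducing the order $\preceq_{\fo}$ via \eqref{eq:preceq}.

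The crux — and the main obstacle — is verifying the order-preservation property $x\preceq_{\fo}y\Rightarrow\f(x)\preceq_{\fo}\f(y)$ on ${\mathcal I}/{\mathcal I}$ components. This is where I must use a \emph{structural} feature of RLS rather than just its existence: because $\f$ maps the entire foil $L_n$ onto $L_{n+1}$ as a ``father'' map, and because DFS/RLS visits an entire subtree as a contiguous interval, the father map $\f$ should be monotone for the RLS order restricted to consecutive foils. I would prove this by showing that if $x$ precedes $y$ in the RLS order of $L_n$, then their fathers $\f(x),\f(y)$ inherit the same relative RLS priority in $L_{n+1}$: the local sibling order from ingredient (1) of the RLS construction is itself defined by lexicographic order, which is preserved when passing to a common ancestor, and the DFS interval structure guarantees that the descendant-blocks do not interleave across the father map. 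The delicate step is ruling out the case where $x,y$ lie in different subtrees whose roots reverse order under $\f$; this cannot happen precisely because RLS priority at level $n$ is determined by priority of the ancestors at every senior level, so priority is automatically consistent under $\f$. Assembling the per-component constructions into a single global \pom{} $\fo$, and invoking the Class~$i$ decomposition corollary to see that the classes are measurable and flow-adapted, then completes the argument; uniqueness fails since any choice of root-foil $L_0$ or tie-breaking rule yields a different dense bijection.
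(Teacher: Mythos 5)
Your proposal is correct and follows essentially the same route as the paper's proof: finite foils are handled by a lexicographic cyclic shift, and on ${\mathcal I}/{\mathcal I}$ components $\fo$ is taken to be the successor map of the RLS order restricted to each foil, with order preservation deduced from the fact that DFS visits each subtree as a contiguous block, so that RLS priority of two points in a foil is inherited by their fathers. No substantive differences to report.
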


\begin{proof}
If the connected component $C$ of a realization $\phi$
has finite foils, the following construction can be used: $\fo(x)$ 
is the element coming next to $x$ in the lexicographic order.
This rule is applied to all elements
of a foil except the greatest element for this order, whereas
$\fo$ of the greatest element is the least element. 

For a connected component $C$ with all its foils with infinite cardinality, 
the construction uses the RLS total order on the nodes of $G^\f(C)$.

One defines $\fo(x)$ as the next element in $L(x)$, i.e.,
the greatest element of $L(x)$ which is less than $x$,
makes $\fo$ a bijection, and the orbit of each point $x$ of $L(x)$
is the foil $L(x)$. 

The property that $\preceq_{\fo}$ is preserved by $\f$ follows
from the fact that if $x$ has priority over $y$ for DFS, then
the father of $x$ also has priority
over the father of $y$ for DFS. 
\end{proof}

Let $\rfo$ denote the \pomk{} of the \pom{} $\fo$ defined
in the last theorem.
For the same reasons as above, the dynamical system $(\n^0,\theta_{\rfo})$
preserves $\fp$.

In the next definition and below,
in order to simplify notation, $\preceq_{\fo}$ 
(defined in (\ref{eq:preceq})) is often
replaced by $\preceq_\bot$.

\begin{defi}\label{f.orthogonal}
For two elements $x$ and $y$ of the same foil s.t. $x\preceq_\bot y$, let 
\eqn{\dd(x,y):=\mbox{Card}
\{z: y\preceq_\bot z\prec_{\bot} x\}.}
By convention let $\dd(y,x):=-\dd(x,y)$.
\end{defi}
It is easy to verify that for all $x$ and $y$ in the same foil,
\eq{\label{orthogonal.distance}\fo^{\dd(x,y)}(x)=y.}

\subsection {Point Foils and Components}
This subsection discusses some properties of the foil
and the component of the origin, seen as point processes.

For all countable sets $S$ of points of $\R^d$ without accumulation,
let $\Psi(S)$ denote the counting measure with support $S$.

Let $L_0$ (resp. $C_0$) denote the foil (resp. component)
of the origin under $\p_\Phi$.
The counting measure $\Psi(L_0)$ under $\p_\Phi$ (resp. $\Psi(C_0)$
under $\p_\Phi$)
will be called the {\em point foil} (resp. the
{\em point component}) of $\Phi$ w.r.t. the \pom{} $F$.

The terms point foil and point component are used to stress
that these random counting measures are not always
Palm versions of flow-adapted point processes. 
More precisely, let ${\cal Q}_0$ denote the distribution
of the point foil $\Psi(L_0)$. 
If the foliation of $C_0$ is not markable, then
${\cal Q}_0$ is not the Palm distribution of a flow-adapted
point process (see Subsection \ref{sec:parti}). Similarly, if $C_0$
is not markable, then the distribution ${\cal R}_0$
of $\Psi(C_0)$ is not the
Palm distribution of a stationary
point process.

It follows from the above considerations
that both in the markable and the
non-markable cases, ${\cal Q}_0$ (resp. ${\cal R}_0$) is preserved
by $\theta_{\rfo}$ (resp. $\theta_h$). This invariance property
is of course classical in the markable case.

The fact that it holds in general can be phrased
as follows: for all 
(non--necessarily measure preserving) 
dynamics $f$ on a stationary
point process, there exists
a dynamics $\rfo$ on the typical leaf of the stable manifold of $f$,
which is bijective, dense (has the whole leaf as orbit), 
and which preserves the law of the leaf.
\section{Statistical Properties of \PoMk{} Foils}
\label{sec:quant}
\subsection{Foil Cardinalities}
The following proposition establishes
a connection between the Palm-distribution of $l_n(0)$ (the
cardinality of the set of $\f$-cousins of 0 with the same $n$-th
order ancestor) and the distribution
of $d_n(0)$ (the cardinality of set of $\f$ descendants of generation $n$
w.r.t. $0$ -- see Definition~\ref{foliation}):
\begin{prop}\label{rn}
For all point-shifts $\f$,  for all
stationary point processes $(\Phi,\p)$, 
for all $h:\mathbb N\to \R^{+}$,
\eq{\label{rne}\ep\lk h(d_n(0))\rk=h(0)\fp\lk 0\notin
\f_\Phi^n(\Phi)\rk+\ep\lk \frac{h(l_n(0))}{l_n(0)}\rk.}
\end{prop}
\begin{proof} 
Let   
\eqn{w(\phi,x,y)=\ind\{\rfp^n(x)=y\}\frac {h(l_n(x))}{l_n(x)},}
where $\phi = \Phi(\om)$.
For all $x$ and $y$ in $\phi$,
\eqn{w^+(x)=\frac {h(l_n(x))}{l_n(x)},\quad w^-(y)=
\ind\{d_n(y)\neq 0\}h(d_n(y)),}
and therefore using Lemma \ref{w},
\begin{eqnarray*}
\fe\lk\frac {h(l_n(0))}{l_n(0)}\rk&=&\fe\lk\ind\{d_n(0)\neq 0\}h(d_n(0))\rk\\
&=&\fe\lk h(d_n(0))\rk-\fe\lk \ind\{d_n(0)= 0\}h(d_n(0))\rk\\
&=&\fe\lk h(d_n(0))\rk-\fp\lk d_n(0)= 0\rk h(0)\\
&=&\fe\lk h(d_n(0))\rk-\fp\lk 0\notin \f_\Phi^n(\Phi)\rk h(0).
\end{eqnarray*}
\end{proof}

The announced quantitative results are given in 
the following corollaries of Proposition \ref{rn}. 

If in (\ref{rne}) $h(x)$ is replaced by $xh(x)$, one get: 

\begin{cor} For all $n\ge 0$,
\eq{\label{nre}\ep\lk h(l_n(0))\rk=\ep\lk d_n(0)h(d_n(0))\rk.}
\end{cor}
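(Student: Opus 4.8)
The plan is to derive this identity directly from Proposition~\ref{rn} by a single substitution, exactly as the displayed remark preceding the statement suggests. Concretely, I would apply the identity~(\ref{rne}) not to $h$ itself but to the auxiliary test function $\tilde h(x):=x\,h(x)$. Since $h$ takes values in $\R^+$ and $x\ge 0$ on $\mathbb N$, the function $\tilde h$ again maps $\mathbb N$ into $\R^+$, so it is an admissible argument for Proposition~\ref{rn}, and~(\ref{rne}) holds verbatim with $\tilde h$ in place of $h$.

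The computation then splits into two elementary observations. First, the boundary term collapses: $\tilde h(0)=0\cdot h(0)=0$, so the contribution $\tilde h(0)\,\fp\lk 0\notin\f_\Phi^n(\Phi)\rk$ vanishes identically, and the term recording the event that $0$ has no $n$-th order ancestor image drops out. Second, the ratio appearing on the right-hand side of~(\ref{rne}) simplifies: one has $\tilde h(l_n(0))/l_n(0)=l_n(0)\,h(l_n(0))/l_n(0)=h(l_n(0))$, $\fp$-almost surely. This cancellation is legitimate because, under the Palm probability $\fp$, the origin is almost surely a point of $\Phi$ and therefore belongs to its own class $\l[n]\f(0)$, so that $l_n(0)\ge 1$ and the division is never by zero.

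Putting these together, the left-hand side of~(\ref{rne}) for $\tilde h$ reads $\ep\lk\tilde h(d_n(0))\rk=\ep\lk d_n(0)\,h(d_n(0))\rk$, while the right-hand side reduces to $\ep\lk h(l_n(0))\rk$. Equating the two yields precisely~(\ref{nre}). There is no genuine obstacle here: the only points requiring care are the well-definedness of the substitution, namely that $\tilde h$ remains nonnegative and that $l_n(0)$ is $\fp$-almost surely at least one so that the cancellation is valid. Once these bookkeeping facts are in place the corollary is immediate.
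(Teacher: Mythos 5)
Your proof is correct and is exactly the paper's argument: the corollary is stated as the result of substituting $xh(x)$ for $h(x)$ in (\ref{rne}), and you have simply spelled out why the boundary term vanishes and why the cancellation by $l_n(0)\ge 1$ is legitimate under $\fp$. No further comment is needed.
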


\begin{cor}
\label{cor25}
For all $n\ge 0$, 
\eq{\label{pev}\fp\lk 0\in \f_\Phi^n(\Phi)\rk =\fe\lk\frac{1}{l_n(0)}\rk.} 
In addition
\eq{\label{pevinf}\fp\lk 0\in \f^\infty(\Phi)\rk =\fe\lk\frac{1}{l_\infty(0)}\rk.} 
\end{cor}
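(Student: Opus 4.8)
The plan is to obtain both identities directly from Proposition~\ref{rn} by a single specialization followed by a monotone limiting argument. Recall that the proposition asserts
\eqn{\ep\lk h(d_n(0))\rk=h(0)\fp\lk 0\notin \f_\Phi^n(\Phi)\rk+\ep\lk \frac{h(l_n(0))}{l_n(0)}\rk}
for every $h:\mathbb N\to\R^{+}$. For (\ref{pev}) I would simply take $h\equiv 1$. Since the set $\l[n]\f(0)$ always contains $0$ itself, one has $l_n(0)\ge 1$, so $1/l_n(0)$ is well defined and lies in $(0,1]$. With this choice the left-hand side becomes $\ep\lk 1\rk=1$, the first term on the right becomes $\fp\lk 0\notin \f_\Phi^n(\Phi)\rk$, and the second term becomes $\fe\lk 1/l_n(0)\rk$. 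Rearranging and using $\fp\lk 0\in \f_\Phi^n(\Phi)\rk=1-\fp\lk 0\notin \f_\Phi^n(\Phi)\rk$ gives (\ref{pev}) immediately.

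For (\ref{pevinf}) I would pass to the limit $n\to\infty$ in (\ref{pev}) and check that each side converges to the announced quantity. On the right-hand side, the sets $\l[n]\f(0)$ increase to the foil $\l\f(0)$ (as recorded in Section~\ref{sec:difo}), so $l_n(0)$ increases to $l_\infty(0):=\lim_n l_n(0)$, the cardinality of the foil of $0$; consequently $1/l_n(0)$ decreases to $1/l_\infty(0)$, with the convention $1/\infty=0$ on the event $\{l_\infty(0)=\infty\}$. As the integrands are bounded by $1$, dominated convergence yields $\fe\lk 1/l_n(0)\rk\to\fe\lk 1/l_\infty(0)\rk$. On the left-hand side, the images $\f_\Phi^n(\Phi)$ form a decreasing sequence with intersection $\f^\infty(\Phi)$, so the events $\{0\in \f_\Phi^n(\Phi)\}$ decrease to $\{0\in \f^\infty(\Phi)\}$; continuity of the probability $\fp$ from above then gives $\fp\lk 0\in \f_\Phi^n(\Phi)\rk\to\fp\lk 0\in \f^\infty(\Phi)\rk$. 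Equating the two limits produces (\ref{pevinf}).

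I do not expect a serious obstacle here: the corollary is essentially the choice $h\equiv 1$ in Proposition~\ref{rn}, together with a routine passage to the limit. The only points deserving a moment's care are the well-definedness of $1/l_n(0)$ (guaranteed by $l_n(0)\ge 1$), the two monotonicity facts $\l[n]\f(0)\uparrow \l\f(0)$ and $\f_\Phi^n(\Phi)\downarrow \f^\infty(\Phi)$, and the justification of the interchange of limit and expectation, which follows from the uniform bound $0\le 1/l_n(0)\le 1$ and the finiteness of $\fp$. All of these are already available from the material of Section~\ref{sec:difo}.
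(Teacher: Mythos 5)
Your proof is correct and follows the paper's own argument exactly: the first identity is the specialization $h\equiv 1$ of Proposition~\ref{rn}, and the second is the limit $n\to\infty$ justified by monotonicity of $l_n(0)$ and of the events $\{0\in \f_\Phi^n(\Phi)\}$ (the paper invokes monotone convergence where you invoke the dominated version, but for a decreasing sequence bounded by $1$ these are interchangeable). Your added care about $l_n(0)\ge 1$ and the two monotonicity facts is a harmless elaboration of what the paper leaves implicit.
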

\begin{proof}
The first result is obtained by putting $h\equiv1$ in (\ref{rne}).
Equation (\ref{pevinf}) is obtained
when letting $n\to \infty$ in Equation (\ref{pev}) and when
using monotone convergence.
\end{proof}

Equation (\ref{pevinf}) immediately proves:
\begin{cor}
\label{cor26}
$\f$ evaporates $(\Phi,\p)$ if and only if
the $\f$-foil of $0$ is $\fp$ a.s. infinite\footnote{Equivalently, the
iterated images of $C$, seen as counting measures, converge to
0 for the vague topology.}.
\end{cor}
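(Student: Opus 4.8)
Looking at Corollary \ref{cor26}, I need to prove that $\f$ evaporates $(\Phi,\p)$ if and only if the $\f$-foil of $0$ is $\fp$-almost surely infinite. Let me sketch the proof strategy.

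=== PROOF PROPOSAL ===

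The plan is to read off the result directly from Equation (\ref{pevinf}) of Corollary \ref{cor25}, which states that $\fp\lk 0\in \f^\infty(\Phi)\rk =\fe\lk 1/l_\infty(0)\rk$. First I would recall the definition of evaporation (Definition~\ref{defev}): the \pom{} $\f$ evaporates a component $C$ if $\f_\Phi^\infty(C)=\emptyset$ \asy. To speak of evaporating the whole point process $(\Phi,\p)$, I would interpret this as $\f^\infty(\Phi)=\emptyset$ \asy{} under $\p$, equivalently that every connected component is of Class~${\mathcal I}/{\mathcal I}$ by Corollary~\ref{coreva}; I should make this interpretation explicit at the start of the proof to avoid ambiguity.

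The key observation is the dictionary between the $\p$-statement ``$\f^\infty(\Phi)=\emptyset$'' and the $\fp$-statement about the origin. The quantity $\fp\lk 0\in \f^\infty(\Phi)\rk$ is the Palm-probability that the origin survives all iterates, i.e., that the origin is an $\f$-primeval point. Evaporation of $(\Phi,\p)$ means no point survives, so the set $\f^\infty(\Phi)$ is \asy{} empty under $\p$; since this is a flow-adapted event, this is equivalent to $\fp\lk 0\in \f^\infty(\Phi)\rk=0$. I would justify this equivalence by noting that $\f^\infty(\Phi)$ is itself a flow-adapted point process whose intensity is zero if and only if its Palm probability of containing the origin vanishes.

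Having reduced the left-hand side, the right-hand side of (\ref{pevinf}) does the rest. Since $1/l_\infty(0)$ is a non-negative random variable bounded by $1$, the expectation $\fe\lk 1/l_\infty(0)\rk$ equals zero if and only if $1/l_\infty(0)=0$ $\fp$-\asy, which is exactly the statement that $l_\infty(0)=\infty$ $\fp$-\asy; that is, the $\f$-foil of $0$ is $\fp$-almost surely infinite. Chaining these two equivalences through (\ref{pevinf}) gives the claim: $\f$ evaporates $(\Phi,\p)$ $\Leftrightarrow \fp\lk 0\in \f^\infty(\Phi)\rk=0 \Leftrightarrow \fe\lk 1/l_\infty(0)\rk=0 \Leftrightarrow l_\infty(0)=\infty$ $\fp$-\asy.

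The proof is essentially a two-line deduction once Corollary~\ref{cor25} is available, so I do not expect a serious computational obstacle. The one point demanding care is the passage between the $\p$-almost sure statement defining evaporation of the whole process and the $\fp$-probability of the origin surviving; this is the standard Palm-theoretic translation between the intensity of a flow-adapted point process being zero and the origin \asy{} failing to belong to it, and I would phrase it carefully to keep the equivalence airtight.
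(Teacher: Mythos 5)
Your proposal is correct and matches the paper's approach exactly: the paper derives Corollary~\ref{cor26} directly from Equation~(\ref{pevinf}), observing that the left-hand side vanishes iff $\f$ evaporates and the right-hand side vanishes iff $l_\infty(0)=\infty$ $\fp$-a.s. Your extra care in spelling out the Palm-theoretic translation between the $\p$-a.s. emptiness of $\f^\infty(\Phi)$ and $\fp\lk 0\in\f^\infty(\Phi)\rk=0$ is a reasonable elaboration of what the paper treats as immediate.
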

This is consistent with the
result of Corollary \ref{coreva}
since the property that the foil of $0$ is infinite a.s.
is equivalent to having all connected components of Class~${\mathcal I}/{\mathcal I}$,
or equivalently to having
$(\Phi_{{\mathcal F}/{\mathcal F}},\p)$ and $(\Phi_{{\mathcal I}/{\mathcal F}},\p)$ almost surely empty.
\begin{cor} For all $n\ge 0$,
\eq{ \label{pew}\fe\lk d_n(0)|0\in\rf^n_\Phi(\Phi)\rk=1/\fe\lk\frac{1}{l_n(0)}\rk.}
\label{cor22}
\end{cor}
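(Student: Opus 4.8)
The plan is to unfold the conditional expectation on the left of (\ref{pew}) by its very definition and then to recognize the numerator and the denominator as quantities that have already been computed in the preceding results. Concretely, write
\eqn{\fe\lk d_n(0)|0\in\rf^n_\Phi(\Phi)\rk=\frac{\fe\lk d_n(0)\ind\{0\in\rf^n_\Phi(\Phi)\}\rk}{\fp\lk 0\in\rf^n_\Phi(\Phi)\rk},}
so that the whole argument reduces to evaluating the numerator and the denominator separately (the conditioning event being of positive probability whenever the $n$-th image is a.s.\ non-empty, which is the only regime in which the statement is nonvacuous).

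The first step is to identify the conditioning event in terms of descendants. By Definition~\ref{defmn}, the relation $0\in\rf^n_\Phi(\Phi)$ holds exactly when there is a point $y\in\phi$ with $\rf^n_\phi(y)=0$, that is, when $D_n(0)\neq\emptyset$, or equivalently $d_n(0)\ge 1$. Hence the indicator $\ind\{0\in\rf^n_\Phi(\Phi)\}$ coincides with $\ind\{d_n(0)\neq 0\}$. This is the key simplification for the numerator: since the factor $d_n(0)$ already vanishes on the event $\{d_n(0)=0\}$, one has $d_n(0)\ind\{d_n(0)\neq 0\}=d_n(0)$, so the numerator equals $\fe\lk d_n(0)\rk$, which is $1$ by the mass-transport identity (\ref{ftr}) of Proposition~\ref{mn1}.

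For the denominator I would simply invoke (\ref{pev}) of Corollary~\ref{cor25}, which already gives $\fp\lk 0\in\rf^n_\Phi(\Phi)\rk=\fe\lk 1/l_n(0)\rk$. Substituting both evaluations into the ratio then yields the right-hand side $1/\fe\lk 1/l_n(0)\rk$, as claimed. The computation is essentially immediate once these two ingredients are in place; the only point requiring any care — and thus the mild ``obstacle'' — is the correct identification of the conditioning event $\{0\in\rf^n_\Phi(\Phi)\}$ with $\{d_n(0)\ge 1\}$, since it is precisely this identification that makes the indicator disappear from the numerator and lets the unit-mean property of Proposition~\ref{mn1} apply verbatim.
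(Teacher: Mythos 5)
Your proof is correct and is essentially the paper's own argument: the paper likewise starts from $\fe[d_n(0)]=1$ (equation (\ref{ftr})), observes that $d_n(0)$ vanishes off the event $\{0\in\rf^n_\Phi(\Phi)\}$ so that only the term $\fe[d_n(0)\mid 0\in\rf^n_\Phi(\Phi)]\,\fp[0\in\rf^n_\Phi(\Phi)]$ survives, and then substitutes (\ref{pev}); your ratio-form of the conditional expectation is just the same computation written the other way around.
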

\begin{proof}
Taking $h$ the identity in (\ref{ftr}) and (\ref{rne}) gives
\begin{eqnarray*}1 = \fe\lk d_n(0)\rk&=&\fe\lk d_n(0)|0\notin\rf^n_\Phi(\Phi)\rk\fp\lk 0\notin\rf^n_\Phi(\Phi)\rk\\
&&+\fe\lk d_n(0)|0\in\rf^n_\Phi(\Phi)\rk\fp\lk 0\in\rf^n_\Phi(\Phi)\rk\\
&=&\fe\lk d_n(0)|0\in\rf^n_\Phi(\Phi)\rk\fp\lk 0\in\rf^n_\Phi(\Phi)\rk.
\end{eqnarray*}
Replacing $\fp\lk 0\in\rf^n_\Phi(\Phi)\rk$ using (\ref{pev}) implies
the result.
\end{proof}

\begin{cor}
\label{cor28}
If $f$ does not evaporate $(\Phi,\p)$, then
\eq{ \label{pewj} 
\fe\lk d_n(0)|0\in\rf^n(\Phi)\rk
\uparrow_{n\to \infty} 1/\fe\lk\frac{1}{l_\infty(0)}\rk<\infty} 
and
\eq{ \label{pewi}\limsup_{n\to \infty} 
\fe\lk d_n(0)|0\in\rf^\infty(\Phi)\rk
\le 1/\fe\lk\frac{1}{l_\infty(0)}\rk<\infty.} 
\end{cor}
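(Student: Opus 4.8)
The plan is to read off both statements from the exact identity of Corollary~\ref{cor22}, $\fe\lk d_n(0)\mid 0\in\f^n(\Phi)\rk=1/\fe\lk 1/l_n(0)\rk$, combined with a monotonicity argument in $n$ and the two identities of Corollary~\ref{cor25}.

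First I would record the monotone behaviour of the level-set cardinalities. Since $\l\f(0)$ is the limit of the increasing sets $\l[n]\f(0)=\{y;\f^n(y)=\f^n(0)\}$, the cardinalities satisfy $l_n(0)\uparrow l_\infty(0)$, so $1/l_n(0)\downarrow 1/l_\infty(0)$ pointwise with the uniform bound $1/l_n(0)\le 1$. Dominated (or monotone) convergence then gives $\fe\lk 1/l_n(0)\rk\downarrow\fe\lk 1/l_\infty(0)\rk$, and since this is a decreasing sequence of strictly positive numbers, its reciprocal increases; plugging into Corollary~\ref{cor22} shows that $\fe\lk d_n(0)\mid 0\in\f^n(\Phi)\rk\uparrow 1/\fe\lk 1/l_\infty(0)\rk$, which is the convergence asserted in (\ref{pewj}).

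To finish (\ref{pewj}) I must show the limit is finite, i.e. $\fe\lk 1/l_\infty(0)\rk>0$. Here I would invoke (\ref{pevinf}), which says $\fp\lk 0\in\f^\infty(\Phi)\rk=\fe\lk 1/l_\infty(0)\rk$, together with Corollary~\ref{cor26}: non-evaporation of $(\Phi,\p)$ is exactly the statement that the foil of $0$ is finite with positive probability, equivalently that $0$ is primeval with positive Palm probability, so the right-hand side is strictly positive. For (\ref{pewi}) I would use the inclusion $\{0\in\f^\infty(\Phi)\}=\bigcap_n\{0\in\f^n(\Phi)\}\subseteq\{0\in\f^n(\Phi)\}$. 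Because $d_n(0)$ vanishes off $\{0\in\f^n(\Phi)\}$ and $\fe\lk d_n(0)\rk=1$ by (\ref{ftr}), one has $\fe\lk d_n(0)\ind\{0\in\f^n(\Phi)\}\rk=1$, and monotonicity of the indicator under the inclusion yields $\fe\lk d_n(0)\ind\{0\in\f^\infty(\Phi)\}\rk\le 1$ for every $n$. Dividing by $\fp\lk 0\in\f^\infty(\Phi)\rk=\fe\lk 1/l_\infty(0)\rk>0$ gives the uniform bound $\fe\lk d_n(0)\mid 0\in\f^\infty(\Phi)\rk\le 1/\fe\lk 1/l_\infty(0)\rk$, and taking $\limsup_n$ yields (\ref{pewi}).

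Since everything is a repackaging of the earlier corollaries, I do not expect a single hard computation; the points needing genuine care are the two at the end. The more delicate one is the finiteness of the limit, which hinges entirely on translating ``$\f$ does not evaporate $(\Phi,\p)$'' into the strict positivity $\fe\lk 1/l_\infty(0)\rk>0$ via Corollary~\ref{cor26} and (\ref{pevinf}). I would also flag that the weakening to a $\limsup$ and an inequality in (\ref{pewi}) is intrinsic: conditioning on the origin being primeval for all times only controls $\fe\lk d_n(0)\ind\{0\in\f^\infty(\Phi)\}\rk$ from above by the mass $1$, and nothing in the argument forces this truncated mass to converge to the value that would make the bound an equality.
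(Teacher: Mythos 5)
Your proposal is correct and follows essentially the same route as the paper, whose proof of this corollary is just the two-line remark that (\ref{pewj}) follows from (\ref{pew}) and that (\ref{pewi}) follows from (\ref{pew}) plus monotonicity; you have simply filled in the intended details (monotone convergence of $1/l_n(0)\downarrow 1/l_\infty(0)$, positivity of $\fe[1/l_\infty(0)]$ from non-evaporation via Corollary~\ref{cor26} and (\ref{pevinf}), and the inclusion $\{0\in\f^\infty(\Phi)\}\subseteq\{0\in\f^n(\Phi)\}$ together with $\fe[d_n(0)\ind\{0\in\f^n(\Phi)\}]=1$). No gaps.
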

\begin{proof}
The first assertion follows from Equation (\ref{pew}).
The second follows from
Equation (\ref{pew}) and simple monotonicity arguments.
\end{proof}

This subsection is concluded with a few observations.
\begin{enumerate}
\item If $\f$ evaporates $\Phi$, namely if $\Phi_{{\mathcal F}/{\mathcal F}}$ and
$\Phi_{{\mathcal I}/{\mathcal F}}$ are empty, then each $\f$-foil
of $\Phi$ has an infinite number of points (Corollary \ref{cor26}),
and the typical point
has a number of descendants which is a.s. finite (Corollary \ref{cor25}) 
but with infinite mean (Proposition \ref{mn1}), and hence heavy tailed.
See Subsection \ref{ssStrip} for an example.

\item
If $\Phi_{{\mathcal I}/{\mathcal I}}$ is empty, then each $\f$-foil
of $\Phi$ has an a.s. finite number of points 
(Theorem \ref{cardinality.components});
the typical point has descendants of all orders with a positive probability\footnote{With probability 1 iff $\f$ is bijective.}
(Corollary \ref{cor25}),
and hence an infinite number of descendants. However,
the expected number of descendants of order $n$ 
does not diverge in mean (Corollary \ref{cor28}) as $n$ tends to infinity.
If in addition $\Phi_{{\mathcal I}/{\mathcal F}}$ is empty, then
the set of descendants of the typical point
looks like a ``finite star with a loop at the center''.
See Subsection \ref{ssMnn} for an example.
If in place $\Phi_{{\mathcal F}/{\mathcal F}}$ is empty, then
the set of descendants of the typical point is either finite or
looks like an ``infinite path with finite trees attached to it''.
The points in this infinite path
constitute a sub-stationary point process.
This point process always has a
positive intensity.
(For instance, for the RLS \pom{} on the Poisson
point process in $\R^2$, this is the
whole point process. There exist cases where 
$F$ is not bijective and the connected components are 
of type $\cal I/F$ and hence such that this 
sub-point process is not the whole point process.
\end{enumerate}

\subsection{Foil Intensities} 
This subsection is focused on the intensity of the $\f$-foils.
From Proposition \ref{thdic}, either all foils of a markable component are 
flow-adapted point processes, or none of them are.
The notion of intensity only makes sense in the former case.
The notion of {\em relative intensity} defined in the next subsection allows
one to discuss the ``density'' of foils in whole generality, namely
regardless of the above dichotomy.

\subsubsection{Relative Intensities}
Below, when considering a component
of Class~${\mathcal I}/{\mathcal I}$, it is assumed that
$\fo$ is an $\L[\Phi]\f$-dense 
element of the $\f$-stable group and that
$\preceq_{\fo}$ is $\f$-compatible.

Let $\g_\bot$ denote the \pomk{} of $\fo$ and let $\theog$
denote its related shift on $\n^0$. Equations
(\ref{fgfol}) and (\ref{orthogonal.distance}) give that for $\fp$-almost
all $\phi\in\n^0$, 
\eq{\label{orthogonal.fg}\theog^n\phi=\theta_{\fo^n(\phi,0)}\phi.}
Hence if $\phi\sim_\thetg \psi$,  with abuse of notation,
one can define $\dd(\phi,\psi)$ as the unique integer $n$ such that
$\theog^n\phi=\psi$.

Consider the dynamical system $(\n^0,\theta_{\rfo})$.
The fact that $\fo$ is bijective implies that $\theta_{\rfo}$
preserves $\fp$. 

\begin{thm}
\label{existence.of.relative.intensity}
Let $(\Phi,\p)$ be a stationary point process, $\f$
be an arbitrary \pom{} and  $\fo$ and $\dd$ 
be as in Definition~\ref{f.orthogonal}.
Then, for $\cp_0$ almost-all realizations $\phi$, the limit 
\eq{\label{relative.intensity}\lambda_+(\phi):=
\lim_{n\to\infty}
\frac{\dd\lp\fph(0),\fph\circ\fo^n(0)\rp}{\dd\lp 0,\fo^n(0)\rp}}
exists, is positive and
in $L_1({\cal P}_0)$.
In addition, $\lambda_+(\phi)$
is a function of the foil of $0$ only; i.e., if
$0\sim_\f x$,  $\lambda_+(\theta_x\phi)=\lambda_+(\phi)$ and 
$\lambda_+$ is  independent
of the choice $\fo$ as far as it satisfies
the properties in Proposition~\ref{existence.f.orthogonal}.
\end{thm}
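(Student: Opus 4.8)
The plan centers on recognizing \eqref{relative.intensity} as a ratio that admits an ergodic-theoretic interpretation under the measure-preserving dynamics $(\n^0,\theta_{\rfo})$. Let me sketch the key structural observations.

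\textbf{Setting up the additive cocycle.} First I would reduce the limit to a ratio of Birkhoff-type sums. Using \eqref{orthogonal.fg}, the quantity $\dd(0,\fo^n(0))$ is simply $n$, since $\fo^n$ advances $n$ steps along the foil in the $\preceq_\bot$ order. The numerator $\dd(\fph(0),\fph\circ\fo^n(0))$ counts, along the \emph{senior} foil $L_+$, the number of points strictly between the images $\fph(0)$ and $\fph(\fo^n(0))$. The plan is to write this numerator as a telescoping sum
\eqn{\dd\lp\fph(0),\fph\circ\fo^n(0)\rp=\sum_{k=0}^{n-1}\dd\lp\fph\circ\fo^k(0),\fph\circ\fo^{k+1}(0)\rp.}
Setting $\psi(\phi):=\dd(\fph(0),\fph\circ\fo(0))$, each summand equals $\psi$ evaluated at $\theog^k\phi$, so the numerator is the Birkhoff sum $\sum_{k=0}^{n-1}\psi(\theog^k\phi)$. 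The ratio in \eqref{relative.intensity} is then exactly $\frac1n\sum_{k=0}^{n-1}\psi(\theog^k\phi)$, a Cesàro average of $\psi$ under the $\fp$-preserving shift $\theog$.

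\textbf{Applying the ergodic theorem.} The existence of the limit $\lambda_+(\phi)$ and its membership in $L_1(\cp_0)$ then follow from Birkhoff's pointwise ergodic theorem, \emph{provided} $\psi\in L_1(\cp_0)$. To establish integrability I would invoke the mass-transport machinery used in Proposition~\ref{mn1}: since the foils are related by $\f$ in a flow-adapted way and $\ep[d_n(0)]=1$, the expected ``stretch factor'' of one $\preceq_\bot$-step under $\fph$ is controlled. The positivity of $\lambda_+$ requires more care: one must rule out that the senior foil is infinitely sparser, i.e., that $\psi$ averages to $0$. Here the key is that $\f$ maps $L$ onto a subset of $L_+$ covering it with bounded multiplicity structure encoded by the $d_n$; I expect positivity to come from a reverse mass-transport argument showing the average gap cannot vanish, using that $\fo$ is dense in $L_+$ as well.

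\textbf{Foil-measurability and independence of $\fo$.} That $\lambda_+$ depends only on the foil of $0$ follows because $\psi$, being built from $\dd$ along the foils, is $\theog$-invariant modulo coboundaries; more directly, if $0\co x$ then $\phi$ and $\theta_x\phi$ lie in the same $\theog$-orbit (by Corollary~\ref{folfg} and the density of $\fo$), and the Cesàro limit is constant along orbits, giving $\lambda_+(\theta_x\phi)=\lambda_+(\phi)$. Independence of the choice of $\fo$ I would argue by noting that any two admissible $\preceq_\bot$ orders produce the same foils $L$ and $L_+$ and the same \emph{relative} counting of points between images, since $\dd$ counts intrinsic cardinalities of order-intervals that are preserved whenever the order is $\f$-compatible; the limiting ratio is a genuine density of $L_+$ relative to $L$, independent of how one enumerates within each foil.

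\textbf{Main obstacle.} The hardest step is the strict positivity together with $L_1$-integrability of $\lambda_+$ in the Class~$\mathcal I/\mathcal I$ case, where both foils are infinite and there is no a priori bound relating their point densities. The delicate point is that the numerator measures spacing in $L_+$ while the denominator measures spacing in $L$, and one must show the mass transport $\f\colon L\to L_+$ neither concentrates nor disperses mass so badly that the ratio degenerates to $0$ or $\infty$; I expect this to be where the $\f$-compatibility of $\preceq_{\fo}$ (guaranteed by Proposition~\ref{existence.f.orthogonal}) is essential, since it ensures $\fph$ is monotone and hence the between-images count is well-behaved.
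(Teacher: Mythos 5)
Your overall architecture coincides with the paper's: rewrite the ratio as the Birkhoff average $\frac1n\sum_{k=0}^{n-1}\dd_+(\theog^k\phi)$ of the cocycle $\dd_+(\phi)=\dd(\thetg\phi,\thetg\circ\theog\phi)$ under the $\fp$-preserving shift $\theog$ (the denominator being exactly $n$), and invoke the pointwise ergodic theorem, so that $\lambda_+=\fe[\dd_+\mid{\cal I}]$ with ${\cal I}$ the $\theog$-invariant $\sigma$-field; this also yields the foil-measurability, as in the paper. You likewise correctly locate where $\f$-compatibility of $\preceq_{\fo}$ enters the integrability step: the paper transports mass $1$ from $x$ to every $y\in\l[+]\f(x)$ with $0\le\dd(\f_\phi(x),y)<\dd(\f_\phi(x),\f_\phi\circ\fo(x))$, and order-preservation is what makes these intervals pairwise disjoint as $x$ runs over a foil, giving $w^-(y)\le 1$ and hence $\fe[\dd_+]\le 1$. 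Note that the relevant transport is this new one, not the one of Proposition~\ref{mn1}, and that the Classes ${\mathcal F}/{\mathcal F}$ and ${\mathcal I}/{\mathcal F}$ should first be dismissed by finiteness of their foils.

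The two remaining claims are left as hopes, and in both cases the paper's mechanism differs from what you anticipate. For positivity you propose ``a reverse mass-transport argument,'' but the actual argument is combinatorial: one bounds $\dd_+\ge\ind_Y$, where $Y$ is the event that $0$ is the $\preceq_\bot$-youngest son of its father; if $\fe[\ind_Y\mid{\cal I}]$ vanished on a set of positive measure, the (infinite, Class ${\mathcal I}/{\mathcal I}$) foil of $0$ would contain no youngest son, forcing all of its points to be brothers and contradicting the a.s.\ finiteness of $d_1(\f_\phi(0))$ from Proposition~\ref{mn1}. For independence of the choice of $\fo$, the assertion that $\dd$ ``counts intrinsic cardinalities of order-intervals'' is not a proof: two admissible orders enumerate the same foil differently, the counting functions $\dd$ and $\dd'$ differ pointwise, and the two Birkhoff limits are conditional expectations with respect to a priori different invariant $\sigma$-fields, so their a.s.\ equality is exactly what must be shown. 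The paper closes this with a further pair of mass transports establishing $\fe[\dd_+\ind_A]=\fp[\l[-]\thetg(A)]$ for every $\theog$-invariant $A$, a quantity that manifestly depends only on $\f$; some such identification of the limit in $\fo$-free terms is needed, and your sketch does not supply it.
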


\begin{rem}
The existence of the non-degenerate limit in (\ref{relative.intensity})
can be seen as a proof of the fact that all foils of a connected
components have the ``same dimension''. This fact justifies the
use of the term ``foliation'' within this context (see e.g. \cite{An01}).
\end{rem}
\begin{proof}
If $x$ is in a connected component of $G^\f(\phi)$ which is 
${\mathcal F}/{\mathcal F}$ or
${\mathcal I}/{\mathcal F}$, all statements 
follow from finiteness of the foils.
Hence assume $C(0)$ is ${\mathcal I}/{\mathcal I}$. 
Hence it is sufficient to show that, for $\cp_0$ almost
all $\phi$, the  limit  
\eq{\label{relative.intensity.n0}\lambda_+(\phi)=
\lim_{n\to\infty}\frac{\dd\lp\thetg \phi,\thetg\circ\theog^n\phi\rp}
{\dd\lp \phi,\theog^n\phi\rp},}
exists and is positive, finite and constant on the
foil $\l\thetg(\phi)$ provided the latter is infinite.
Let $ \dd_+(\phi):= \dd(\thetg\phi,\thetg\circ\theog\phi)$.
Now consider the following mass transport:
\eqn{w(\phi,x,y)=
\begin{cases}
1 & y\in \l[+]\f(x) \text{ and } 0  {\le}
\dd(\f_\phi(x),y)
 {<}
\dd(\f_\phi(x), \f_\phi\circ\fo(x))\\
0 & \text{otherwise}.
\end{cases}
}
One has, for all points $x,y\in\phi$, $w^-(y)\leq 1$
and $w_+(x)= \dd_+ (\theta_x\phi)$. Therefore
\eqn{\fe\lk \dd_{+}\rk = \fe\lk w^+(0)\rk = \fe\lk w^-(0)\rk\leq 1.} 
Since  the denominator in (\ref{relative.intensity.n0}) is equal to $n$, 
one has 
\begin{eqnarray*}
\lim_{n\to\infty}\frac{\dd\lp\thetg\phi,\thetg\circ\theog^n\phi\rp}{\dd\lp \phi,\theog^n\phi\rp}
&=&\lim_{n\to\infty}\frac 1n {\sum_{i=1}^n \dd\lp\thetg\circ\theog^{i-1}\phi,\thetg\circ\theog\circ\theog^{i-1}\phi\rp}\\
&=& \lim_{n\to\infty}\frac 1n {\sum_{i=0}^{n-1}  \dd_+ \lp\theog^{i}\phi\rp}.
\end{eqnarray*}
Since $\fo$ is $\p$-\asy{} bijective, $\fp$ is $\theog$-invariant.
Therefore if one denotes by $\cal I$
 the invariant $\sigma$-field of $\theog$,
  the finiteness of $\fe[ \dd_+]$ implies that the last
limit exists for $\fp$-almost all $\phi$ and
it is equal to $\fe[ \dd_+|{\cal I}]$, which is finite
and invariant under the action of $\theog$;
i.e., it is a function of $\l\thetg(\phi)$. 

To prove that $\lambda_+(\phi)$ is a.s. positive, note that, if $Y$
is the event of being the youngest son of the family, then
$\tilde \dd\geq \ind_Y$. Hence if, with positive probability, 
$\lambda_+(\phi)=0$, this means that, with positive probability,
$\fe[\ind_Y|{\cal I}]$ is zero. But since $\fe$ is $\theog$-invariant,
this means that, with positive probability, there is no youngest son
on the foil of $\l\thetg(\phi)$, which means that all points of
$\l\thetg(\phi)$ are brothers. 
Since $C^\thetg(\phi)$ is infinite, this contradicts the a.s.
finiteness of $d_1(\f_\phi(0))$.

Finally to prove that $\lambda_+(\phi)$ is independent of the choice of $\fo$,
it is sufficient to show that $\fe[{\Delta_+}|{\cal I}]$ depends only on $f$.
To do so, it is enough to prove that
for all $A\in {\cal I}$, $\fe[{\Delta_+}\ind_{A}]$ depends only on $f$.
Since $A\in{\cal I}$, with abuse of notation, one has 
$\ind_A(\phi) = \ind_A(\l\thetg(\phi))$.
Let
\begin{eqnarray*}
A_+ &=&\{\thetg \phi; \phi\in A\},\\
\l[-]\thetg(A) &=& \{\phi\in\n^0;\l[-]\thetg(\phi)\neq\emptyset \text{ and } \l[-]\thetg(\phi)\in A \}.
\end{eqnarray*}
It is easy to see that $\l[-]\thetg(A)\in{\cal I}$. Let  
$$u_{\phi}(x,y)={\Delta_+}(\theta_x \phi)\ind_{\{y=F_\phi(x)\}}\ind_{A}(\theta_x \phi),$$ 
which is a flow-adapted transport kernel. If $A_+$ denotes $\{\thetg \phi; \phi\in A\}$, by the mass transport principle,
\eq{\label{expecDelta}
\fe\lk{{\Delta_+}\ind_{A}}\rk=\fe\lk{w^+(0)}\rk= \fe\lk{w^-(0)}\rk = \fe\lk{{\Delta_\bot}\ind_{A_+}}\rk,}
where ${\Delta_\bot}(\phi)$ is the smallest $i>0$ such that $F_{\perp}^i(x)$ has a child and zero otherwise. Note that all elements of $A_+$ have at least one child and therefore $\ind_{A_+}(\phi)$ is zero whenever $\phi$ has no child.  
Let
\[
v_{\phi}(x,y)=\left\{\begin{array}{ll}
1 & \theta_x(\phi)\in A_+ \text{ and } y=f_{\bot}^i(x) \text{ for some } 0\leq i <{\Delta_\bot}(x),\\
0 & \text{ otherwise}.
\end{array}\right.
\]
Since $A\in{\cal I}$, (\ref{expecDelta}) and the mass transport principle give
\begin{eqnarray*}
\fe\lk{{\Delta_+}\ind_{A}}\rk&=&\fe\lk{{\Delta_\bot}\ind_{A_+}}\rk = \fe\lk{v^+(0)}\rk\\
&=&\fe\lk{v^-(0)}\rk = \fe\lk\ind_{\l[-]\thetg(A)}\rk = \fp\lk\l[-]\thetg(A)\rk.
\end{eqnarray*}
Clearly the latter depends only on $f$ and not on the choice of $F_\bot$ which completes the proof. 
\end{proof}

\begin{cor}
Letting $A=\Omega$ in the last proof gives  
$$\fe[\Delta_+]=\fp[{\l[-]\thetg(\Omega)}],$$ 
where the R.H.S. is the probability that 0 is not in the first
foil (if there is any) of its component.
\end{cor}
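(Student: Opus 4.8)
The plan is to read off this corollary as the $A=\Omega$ instance of the mass-transport identity obtained at the end of the proof of Theorem~\ref{existence.of.relative.intensity}. That proof establishes, for every event $A$ in the invariant $\sigma$-field ${\cal I}$ of $\theog$, the equality $\fe[\Delta_+\ind_A]=\fp[\l[-]\thetg(A)]$. Since the whole space $\Omega$ is trivially ${\cal I}$-measurable and invariant, I would simply substitute $A=\Omega$ into this relation; no new transport kernel or limiting argument is required.

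Carrying this out, the left-hand side simplifies because $\ind_\Omega\equiv 1$, so that $\fe[\Delta_+\ind_\Omega]=\fe[\Delta_+]$. For the right-hand side, I would recall the definition $\l[-]\thetg(A)=\{\phi\in\n^0;\ \l[-]\thetg(\phi)\neq\emptyset \text{ and } \l[-]\thetg(\phi)\in A\}$; with $A=\Omega$ the second membership condition is vacuous, so $\l[-]\thetg(\Omega)=\{\phi\in\n^0;\ \l[-]\thetg(\phi)\neq\emptyset\}$. Combining the two sides gives $\fe[\Delta_+]=\fp[\l[-]\thetg(\Omega)]$ directly.

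The only step demanding attention is the probabilistic reading of the right-hand side. Using the seniority order of Definition~\ref{foils.order}, I would argue that the junior foil $\l[-]\thetg(\phi)$ is non-empty exactly when the foil of the origin is not the least (most junior, or \emph{first}) foil of its connected component, equivalently when $0$ admits a strictly junior foil. Hence $\fp[\l[-]\thetg(\Omega)]$ is precisely the probability that $0$ does not lie in the first foil of its component, as claimed. I do not anticipate a genuine obstacle here: the content is the specialization together with this order-theoretic translation. The one point I would verify is that the identity from the previous proof is indeed available for all of ${\cal I}$, and in particular for $A=\Omega$, which holds since $A$ was taken arbitrary in ${\cal I}$ there.
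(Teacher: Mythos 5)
Your proposal is correct and matches the paper's intent exactly: the corollary is nothing more than the substitution $A=\Omega$ into the identity $\fe[\Delta_+\ind_A]=\fp[\l[-]\thetg(A)]$ established at the end of the proof of Theorem~\ref{existence.of.relative.intensity}, together with the observation that $\l[-]\thetg(\Omega)$ is the event that the junior foil of the origin's foil is non-empty, i.e., that $0$ is not in the first foil of its component. No gaps.
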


\begin{defi}
The quantity $\lambda_+(\Phi)$, defined $\p_\Phi$ a.s.,
counts the average number of different points in the foil
$\l[+]\f(0)$ per point in the foil of 0, $\l \f(0)$,
and is hence called the \emph{relative intensity of $L_+^\f(0)$
with respect to $L^\f(0)$} in $\Phi$.
This notion extends to the relative intensity
$$\Lambda_+(x,\Phi)=\lambda_+(\theta_x \Phi)$$
of $L_+^\f(x)$ with respect to $L^\f(x)$ for all $x\in \Phi$.
\end{defi}

\subsubsection{Intensities}
In the particular case where foils are markable,
one gets back the following classical result
as a direct corollary of Theorem \ref{existence.of.relative.intensity}:

\begin{prop}
\label{prop-same-int}
Assume that $(\p,\theta_t)$ is ergodic.
Assume $L^\f(0)$ is markable, so that it is the support of a point process.
Let $\beta$ (resp. $\beta_+$) denote the intensity
of $\Psi(L^\f(0))$ (resp. $\Psi(L_+^\f(0))$). Then
$\beta_+=\beta \Lambda_+,$
where $\Lambda_+={\mathbb E}_0(\Lambda_+(0,\Phi)).$ 
\end{prop}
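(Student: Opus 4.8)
The plan is to read the identity directly off Theorem~\ref{existence.of.relative.intensity}, whose relative intensity $\Lambda_+(0,\Phi)=\lambda_+$ is by construction the asymptotic number of points of $L_+^\f(0)$ per point of $L^\f(0)$ traversed along the foil. First I would record that markability upgrades both foils to genuine point processes: by Proposition~\ref{thdic}, since $L^\f(0)$ is markable, $\Psi(L^\f(0))$ and $\Psi(L_+^\f(0))$ are the Palm versions of stationary point processes with finite positive intensities $\beta$ and $\beta_+$, and $\f$ restricts to a flow-adapted map $\Psi(L^\f(0))\to\Psi(L_+^\f(0))$. With this in hand, the claim $\beta_+=\beta\,\Lambda_+$ is exactly the assertion that the relative intensity coincides with the ratio $\beta_+/\beta$ of the two foil intensities; since positivity of $\beta,\beta_+$ forces infinite foils, the content lies entirely in the ${\mathcal I}/{\mathcal I}$ regime.

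To make this precise I would reuse the very mass transport $w$ built in the proof of Theorem~\ref{existence.of.relative.intensity}, now read inside the pair $(\Psi(L^\f(0)),\Psi(L_+^\f(0)))$: each $x\in L^\f(0)$ sends one unit of mass to every $y\in L_+^\f(0)$ with $\f_\phi(x)\preceq_\bot y\prec_\bot\f_\phi(\fo(x))$. The out-mass of a point is exactly $\dd_+$, so its Palm expectation under $\Psi(L^\f(0))$ equals $\fe\lk\dd_+\rk$ restricted to the foil, which by the Ces\`aro computation in that proof is $\lambda_+=\Lambda_+(0,\Phi)$. For the in-mass, the point is that because $\preceq_\bot$ is $\f$-compatible the images $\f_\phi(\fo^k(0))$ are $\preceq_\bot$-monotone, so the half-open blocks $[\f_\phi(\fo^k(0)),\f_\phi(\fo^{k+1}(0)))$ tile $L_+^\f(0)$ and each $y\in L_+^\f(0)$ receives mass exactly $1$. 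The mass transport principle applied to these two jointly stationary foil processes then reads $\beta\,\Lambda_+=\beta_+\cdot 1$, which is the claim; the ergodicity of $(\p,\theta_t)$ is what lets one replace the foil-measurable quantity $\Lambda_+(0,\Phi)$ by the constant $\Lambda_+=\mathbb E_0(\Lambda_+(0,\Phi))$.

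I expect the only genuine work to be the tiling step for the in-mass. One must rule out a smallest and a largest image $\f_\phi(\fo^k(0))$ in $L_+^\f(0)$, i.e.\ show the images are $\preceq_\bot$-cofinal in both directions; otherwise a boundary point of $L_+^\f(0)$ would receive mass $0$ and the balance would fail. In the ${\mathcal I}/{\mathcal I}$ case this cofinality follows from the absence of an extremal image, which would otherwise be a finite flow-adapted subset of the infinite foil $L_+^\f(0)$ and contradict Theorem~\ref{no.finite.invariant.collection}. The second delicate point is the bookkeeping of Palm measures: the expectation produced by the transport is the Palm expectation of the foil process $\Psi(L^\f(0))$, and one must verify that this is the same foil-average of $\dd_+$ that Theorem~\ref{existence.of.relative.intensity} identifies with $\lambda_+$, so that the two evaluations of the out-mass agree and the identification $\fe\lk\dd_+\rk=\Lambda_+$ on the foil is legitimate.
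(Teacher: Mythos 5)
The paper offers no argument for Proposition~\ref{prop-same-int}: it is simply declared to be a direct corollary of Theorem~\ref{existence.of.relative.intensity}, so there is no written proof to compare yours against. Your derivation is a correct and essentially complete way of substantiating the claim. The two real ingredients are exactly the ones you isolate: (i) the tiling of $L_+^\f(0)$ by the half-open blocks $[\f_\phi(\fo^k(0)),\f_\phi(\fo^{k+1}(0)))$, which needs both the $\f$-compatibility of $\preceq_\bot$ (monotonicity of the images $\f_\phi(\fo^k(0))$, some blocks being allowed to be empty) and two-sided cofinality of $\f(L^\f(0))$ in $L_+^\f(0)$ --- and your appeal to Theorem~\ref{no.finite.invariant.collection} to exclude an extremal image is the right way to get cofinality; and (ii) the mass-transport balance between the two foil processes. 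On (ii), note that Lemma~\ref{w} as stated is a single-process identity; the weighted version $\beta\,{\mathbb E}^0_{\Psi(L)}\lk w^+\rk=\beta_+\,{\mathbb E}^0_{\Psi(L_+)}\lk w^-\rk$ that you invoke (Neveu's exchange formula) does follow from Lemma~\ref{w} applied to $\Phi$ itself, by multiplying the transport by $\ind\{0\in L\}$ and using that $\fe\lk\,\cdot\,\ind\{0\in L\}\rk=\fp\lk 0\in L\rk\,{\mathbb E}^0_{\Psi(L)}\lk\,\cdot\,\rk$ with $\fp\lk 0\in L\rk$ equal to the ratio of $\beta$ to the intensity of $\Phi$; it is worth saying this since the two-process form is not stated in the paper.

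The step you flag but leave loose is the identification ${\mathbb E}^0_{\Psi(L)}\lk \dd_+\rk=\Lambda_+$; the Ces\`aro computation in Theorem~\ref{existence.of.relative.intensity} identifies the \emph{spatial} average of $\dd_+$ along a foil with $\fe\lk \dd_+\mid{\cal I}\rk$, not directly a Palm average. The clean way to close this is to observe that the event $\{0\in L\}$ (membership of the origin in a fixed foil of the markable enumeration) is $\theog$-invariant, hence ${\cal I}$-measurable, so $\fe\lk \dd_+\ind\{0\in L\}\rk=\fe\lk \fe\lk\dd_+\mid{\cal I}\rk\ind\{0\in L\}\rk=\fe\lk\Lambda_+(0,\Phi)\ind\{0\in L\}\rk$, i.e., the Palm average of $\dd_+$ over the foil process coincides with the Palm average of $\Lambda_+(0,\Phi)$ over that same process. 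This also makes explicit that the constant $\Lambda_+$ in the statement is most naturally read as the expectation of $\Lambda_+(0,\Phi)$ conditioned on $0$ lying in the foil in question; ergodicity of $(\p,\theta_t)$ makes $\beta$, $\beta_+$ and this conditional average deterministic, but does not by itself force distinct foils to share the same relative intensity.
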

Note that it follows from $\beta_+=\beta \Lambda_+$
that $\Lambda_+<1$.

\section*{Acknowledgements}
The authors thank Ali Khezeli for his helpful comments and his 
contribution to the proof of Theorem \ref{existence.of.relative.intensity}.
They also thank Antonio Sodre for his useful
comments on this paper. 

\section{Appendix}
\subsection{Mass Transport}
Let $w$ be a point-shift and $(\Phi,\p)$
be a stationary point process. Let $G^w(\Phi)$
be the directed graph of Definition \ref{def-graph}.
Define $w^+(\Phi, 0)$
(ref. $w^-(\Phi, 0)$) to be the out-degree
(in-degree) of node $0$ 
under $\p_\Phi$.
The following is classical:

\begin{lem}[Mass transport principle]\label{w} 
If $w$ is a mass transport and $(\Phi,\p)$
is a stationary point process then 
\eq{\label{wtr}\fe\lk w^+(\Phi, 0)\rk=\fe\lk w^-(\Phi, 0)\rk.}
\end{lem}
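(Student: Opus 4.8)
The plan is to reduce the identity to the refined Campbell theorem (Mecke's formula) combined with the flow-adaptedness of $w$, thereby avoiding any window/boundary argument. Write $\lambda\in(0,\infty)$ for the intensity of $(\Phi,\p)$ and recall that, for a mass transport, $w^+(\Phi,0)=\sum_{y\in\Phi}w(\Phi,0,y)$ and $w^-(\Phi,0)=\sum_{x\in\Phi}w(\Phi,x,0)$. For a measurable weight $h:\sp\to\R^+$ I would first introduce the two \emph{stationary} double sums
\eqn{M_{\mathrm{src}}(h):=\e\lk\sum_{x\in\Phi}\sum_{y\in\Phi}h(x)\,w(\Phi,x,y)\rk,\qquad M_{\mathrm{dst}}(h):=\e\lk\sum_{x\in\Phi}\sum_{y\in\Phi}h(y)\,w(\Phi,x,y)\rk.}
By the invariance of $w$ one has $\sum_{y}w(\Phi,x,y)=w^+(\theta_x\Phi,0)$, so that $M_{\mathrm{src}}(h)=\e\lk\sum_{x\in\Phi}h(x)\,w^+(\theta_x\Phi,0)\rk$; applying the refined Campbell theorem to this marked sum, and likewise to the destination side, yields
\eqn{M_{\mathrm{src}}(h)=\lambda\lp\int_{\sp}h\rp\fe\lk w^+(\Phi,0)\rk,\qquad M_{\mathrm{dst}}(h)=\lambda\lp\int_{\sp}h\rp\fe\lk w^-(\Phi,0)\rk.}
Thus the lemma reduces to the single equality $M_{\mathrm{src}}(h)=M_{\mathrm{dst}}(h)$ for some (hence any) $h$ with $\int h=1$.

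The key step is therefore to prove that the source-weighted and destination-weighted total transported masses agree, and here I would use stationarity and invariance of $w$ simultaneously through a Fubini argument over all translates. Given a second weight $g:\sp\to\R^+$ and $t\in\sp$, set $E_t:=\e\lk\sum_{x,y\in\Phi}h(x-t)\,g(y)\,w(\Phi,x,y)\rk$. Replacing $\Phi$ by $\theta_{-t}\Phi$ (same law) and using $w(\theta_{-t}\phi,x+t,y+t)=w(\phi,x,y)$, one obtains the alternative expression $E_t=\e\lk\sum_{x,y\in\Phi}h(x)\,g(y+t)\,w(\Phi,x,y)\rk$. Integrating both expressions of $E_t$ over $t\in\sp$ and invoking Tonelli (all summands being non-negative) gives
\eqn{\lp\int_{\sp}g\rp M_{\mathrm{src}}(h)=\int_{\sp}E_t\,\d t=\lp\int_{\sp}h\rp M_{\mathrm{dst}}(g).}
Taking $g=h$ with $\int h=1$ then forces $M_{\mathrm{src}}(h)=M_{\mathrm{dst}}(h)$, and combining with the Campbell identities above and dividing by $\lambda\lp\int h\rp>0$ produces $\fe\lk w^+(\Phi,0)\rk=\fe\lk w^-(\Phi,0)\rk$.

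I expect the equality $M_{\mathrm{src}}(h)=M_{\mathrm{dst}}(h)$ to be the one genuine obstacle. The point is that over a fixed bounded window the outgoing mass and the incoming mass differ by boundary crossings, so one cannot merely restrict the two double sums to a box; it is the averaging over \emph{all} translates in the Fubini computation that cancels these boundary effects and makes the source- and destination-weightings symmetric. This is exactly the step in which the stationarity of $\Phi$ and the flow-adaptedness of $w$ are both used in an essential way. The only routine care needed is measurability and non-negativity of $w$ (so that Tonelli applies and the identities remain valid in $[0,\infty]$), the choice of an integrable weight $h$ of unit mass, and the finiteness and positivity of $\lambda$, which guarantee that the Palm expectations appearing above are the intended ones.
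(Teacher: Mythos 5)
Your proof is correct. The paper itself offers no argument for this lemma --- it is stated in the appendix with the remark that it is classical (with the general reference to Last--Thorisson) --- so there is no internal proof to compare against. What you give is the standard derivation: the refined Campbell (Mecke) formula converts the Palm expectations of $w^+(\Phi,0)$ and $w^-(\Phi,0)$ into the source- and destination-weighted stationary double sums, and the translation-averaging/Tonelli step, using jointly the stationarity of $\p$ and the flow-adaptedness of $w$, identifies the two. All interchanges are justified by non-negativity, and the identity survives in $[0,\infty]$ when both sides are infinite, so the argument is complete. One small presentational point: the lemma as stated in the paper defines $w^+$ and $w^-$ as out- and in-degrees of a point-shift graph, whereas the lemma is actually invoked in the body (e.g.\ in the proof of Proposition \ref{rn}) for general non-negative transport kernels; your reading $w^+(\Phi,0)=\sum_{y\in\Phi}w(\Phi,0,y)$, $w^-(\Phi,0)=\sum_{x\in\Phi}w(\Phi,x,0)$ is the one the paper really needs, and your proof covers it.
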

\subsection{Depth First Search}
\label{AB}
DFS is a recursive 
algorithm prescribing a class of ways to traverse
a rooted tree.
Nodes belong to two categories: visited and unvisited.
The algorithm starts from the root, with the latter visited
and all other nodes unvisited.
From a given visited node,
the node visited next is one of its yet unvisited sons.
If all its sons have already been visited (in particular if it 
has no sons), then the algorithm moves to the father of the given node
to search for the next unvisited node.

\subsection{Multi Type Strip  \PoM{}}
\label{ssMultiStrip}
Consider the following variant of the Strip \PoM{}.
To each point $x_i$ of the Poisson point process $\Phi$, one
associates an independent mark $m_i$, which is a Poisson 
point process of intensity 1 on a circle of radius 1. 
Consider the (Poisson cluster) point process
$$\Psi=\Phi +\sum_{i} x_i+m_i.$$
Each realization of $\Psi$
determines the points $x_i$ of $\Phi$ and the associated
cluster $x_i+m_i$. It hence allows one to classify
the points of $\Phi$ in types taking their values in $\mathbb N$,
with the type of $x_i$ being the cardinality of $m_i$.
The Multi Type Strip \PoM{} $f$
maps $y\in x_i+m_i$ to $x_i$ and uses
the $f_s$ map within points of type $k\in \mathbb N$ with $\Phi$.

On $\Psi$, this \pom{} admits an infinite number of connected
components (one per type). It follows from
the results of Subsection \ref{ssStrip} (and from
the fact that the points of type $k$ in $\Phi$
form a stationary Poisson point process of positive intensity
that each connected component has properties similar that that
of the unique component of Subsection \ref{ssStrip}; in
particular, it is
of Class~${\mathcal I}/{\mathcal I}$ and evaporates under the action of $f$.

\bibliographystyle{amsplain}
\bibliography{bib-01-16}

\end{document}